\documentclass{alggeom}
\usepackage{fancyhdr}
\usepackage{enumitem}
\usepackage{mathrsfs}
\usepackage{amscd}
\usepackage{amsmath}
\usepackage{amssymb}
\usepackage{latexsym}
\usepackage{comment}
\usepackage{amscd}
\usepackage{wasysym}  
\usepackage{tikz}
\usetikzlibrary{matrix,arrows}
\usepackage{tikz-cd}
\usepackage{appendix}
\usepackage[nice]{nicefrac}
\usepackage{fix-cm} 
\usepackage{graphicx}
\usepackage{marvosym}
\usepackage[all,cmtip]{xy}
\usetikzlibrary{matrix,arrows}
\usepackage{tabularx}
\usepackage{xcolor,colortbl}
\usepackage{cancel}
\usepackage{lmodern}
\usepackage{soul}
\usepackage{circledsteps}
\usepackage{rotating}
\usepackage{stmaryrd}
\usepackage{enumitem}
\makeatletter
\newcommand\mathcircled[1]{%
   \mathpalette\@mathcircled{#1}%
}
\newcommand\@mathcircled[2]{%
   \tikz[baseline=(math.base)] \node[draw,circle,inner sep=1pt] (math) {$\m@th#1#2$};%
}
\makeatother

\usepackage[pagebackref,hyperindex,linktocpage=true]{hyperref}
\hypersetup{
    colorlinks,
    linkcolor={blue!40!black},
    citecolor={blue!40!black},
    urlcolor={blue!40!black}
}

\usepackage{color}

\usetikzlibrary{decorations.markings}

\makeatletter
\tikzcdset{
  open/.code     = {\tikzcdset{hook, circled};},
  closed/.code   = {\tikzcdset{hook, slashed};},
  open'/.code    = {\tikzcdset{hook', circled};},
  closed'/.code  = {\tikzcdset{hook', slashed};},
  circled/.code  = {\tikzcdset{markwith = {\draw (0,0) circle (.375ex);}};},
  slashed/.code  = {\tikzcdset{markwith = {\draw[-] (-.4ex,-.4ex) -- (.4ex,.4ex);}};},
  markwith/.code ={
    \pgfutil@ifundefined%
    {tikz@library@decorations.markings@loaded}%
    {\pgfutil@packageerror{tikz-cd}{You need to say %
      \string\usetikzlibrary{decorations.markings} to use arrows with markings}{}}{}%
    \pgfkeysalso{/tikz/postaction = {
      /tikz/decorate,
      /tikz/decoration={markings, mark = at position 0.5 with {#1}}}
    }
  },
}
\makeatother

\renewcommand{\geq}{\geqslant}
\renewcommand{\leq}{\leqslant}

\renewcommand{\le}{\leqslant}

\newtheorem{thm}{Theorem}[section]

\newtheorem{propo}[thm]{Proposition}

\newtheorem{lem}[thm]{Lemma}
\newtheorem{sublem}[thm]{Sublemma}
\newtheorem{lem-def}[thm]{Lemma-Definition}
\newtheorem{thm-def}[thm]{Theorem-Definition}
\newtheorem{cor}[thm]{Corollary}
\newtheorem{conject}[thm]{Conjecture}
\newtheorem{propert}[thm]{Properties}
\newtheorem{observ}[thm]{Observation}
\newtheorem{assum}[thm]{Assumption}

\newtheorem{fac}[thm]{Fact}
\newtheorem{ex}[thm]{Example}

\theoremstyle{definition}
\newtheorem*{ack}{Acknowledgement}

\newtheorem{rmk}[thm]{Remark}
\newtheorem{dfn}[thm]{Definition}
\newtheorem{quest}[thm]{Question}
\newtheorem{expec}[thm]{Expectation}
\newtheorem*{abs}{Abstract}

\numberwithin{equation}{section}
\def\ov{\overline}
\def\N{{\mathbb N}}
\DeclareMathOperator{\Mon}{Mon}
\DeclareMathOperator{\supp}{supp}

\newcommand{\nc}{\newcommand}

\nc{\theo}{\begin{thm}} \nc{\xtheo}{\end{thm}}
\nc{\prop}{\begin{propo}} \nc{\xprop}{\end{propo}}
\nc{\lemm}{\begin{lem}} \nc{\xlemm}{\end{lem}}
\nc{\sublemm}{\begin{sublem}} \nc{\xsublemm}{\end{sublem}}
\nc{\lemmdefi}{\begin{lem-def}} \nc{\xlemmdefi}{\end{lem-def}}
\nc{\coro}{\begin{cor}} \nc{\xcoro}{\end{cor}}
\nc{\conj}{\begin{conject}} \nc{\xconj}{\end{conject}}
\nc{\proper}{\begin{propert}} \nc{\xproper}{\end{propert}}
\nc{\obse}{\begin{observ}} \nc{\xobse}{\end{observ}}
\nc{\ques}{\begin{quest}} \nc{\xques}{\end{quest}}

\nc{\fact}{\begin{fac}} \nc{\xfact}{\end{fac}}
\nc{\expe}{\begin{expec}} \nc{\xexpe}{\end{expec}}

\nc{\ackn}{\begin{ack}} \nc{\xackn}{\end{ack}}
\nc{\exam}{\begin{ex}} \nc{\xexam}{\end{ex}}
\nc{\rema}{\begin{rmk}} \nc{\xrema}{\end{rmk}}
\nc{\defi}{\begin{dfn}} \nc{\xdefi}{\end{dfn}}
\nc{\abst}{\begin{abs}} \nc{\xabst}{\end{abs}}

\nc{\pf}{\begin{proof}} \nc{\xpf}{\end{proof}}

\nc{\on}{\operatorname}
\nc{\fraka}{{\mathfrak a}} \nc{\bba}{{\mathbf a}}
\nc{\frakb}{{\mathfrak b}}
\nc{\frakc}{{\mathfrak c}}
\nc{\frakd}{{\mathfrak d}}
\nc{\frake}{{\mathfrak e}}
\nc{\frakf}{{\mathfrak f}}
\nc{\frakg}{{\mathfrak g}}
\nc{\frakh}{{\mathfrak h}}
\nc{\fraki}{{\mathfrak i}}
\nc{\frakj}{{\mathfrak j}}
\nc{\frakk}{{\mathfrak k}}
\nc{\frakl}{{\mathfrak l}}
\nc{\frakm}{{\mathfrak m}}
\nc{\frakn}{{\mathfrak n}}
\nc{\frako}{{\mathfrak o}}
\nc{\frakp}{{\mathfrak p}}
\nc{\frakq}{{\mathfrak q}}
\nc{\frakr}{{\mathfrak r}}
\nc{\fraks}{{\mathfrak s}}
\nc{\frakt}{{\mathfrak t}}
\nc{\fraku}{{\mathfrak u}}
\nc{\frakv}{{\mathfrak v}}
\nc{\frakw}{{\mathfrak w}}
\nc{\frakx}{{\mathfrak x}}
\nc{\fraky}{{\mathfrak y}}
\nc{\frakz}{{\mathfrak z}}
\nc{\frakA}{{\mathfrak A}}
\nc{\frakB}{{\mathfrak B}}
\nc{\frakC}{{\mathfrak C}}
\nc{\frakD}{{\mathfrak D}}
\nc{\frakE}{{\mathfrak E}}
\nc{\frakF}{{\mathfrak F}}
\nc{\frakG}{{\mathfrak G}}
\nc{\frakH}{{\mathfrak H}}
\nc{\frakI}{{\mathfrak I}}
\nc{\frakJ}{{\mathfrak J}}
\nc{\frakK}{{\mathfrak K}}
\nc{\frakL}{{\mathfrak L}}
\nc{\frakM}{{\mathfrak M}}
\nc{\frakN}{{\mathfrak N}}
\nc{\frakO}{{\mathfrak O}}
\nc{\frakP}{{\mathfrak P}}
\nc{\frakQ}{{\mathfrak Q}}
\nc{\frakR}{{\mathfrak R}}
\nc{\frakS}{{\mathfrak S}}
\nc{\frakT}{{\mathfrak T}}
\nc{\frakU}{{\mathfrak U}}
\nc{\frakV}{{\mathfrak V}}
\nc{\frakW}{{\mathfrak W}}
\nc{\frakX}{{\mathfrak X}}
\nc{\frakY}{{\mathfrak Y}}
\nc{\frakZ}{{\mathfrak Z}}
\nc{\bbA}{{\mathbb A}}
\nc{\bbB}{{\mathbb B}}
\nc{\bbC}{{\mathbb C}}
\nc{\bbD}{{\mathbb D}}
\nc{\bbE}{{\mathbb E}}
\nc{\bbF}{{\mathbb F}} \nc{\bbf}{{\mathbf f}}
\nc{\bbG}{{\mathbb G}}
\nc{\bbH}{{\mathbb H}}
\nc{\bbI}{{\mathbb I}}
\nc{\bbJ}{{\mathbb J}}
\nc{\bbK}{{\mathbb K}}
\nc{\bbL}{{\mathbb L}}
\nc{\bbM}{{\mathbb M}}
\nc{\bbN}{{\mathbb N}}
\nc{\bbO}{{\mathbb O}}
\nc{\bbP}{{\mathbb P}}
\nc{\bbQ}{{\mathbb Q}}
\nc{\bbR}{{\mathbb R}}
\nc{\bbS}{{\mathbb S}}
\nc{\bbT}{{\mathbb T}}
\nc{\bbU}{{\mathbb U}}
\nc{\bbV}{{\mathbb V}}
\nc{\bbW}{{\mathbb W}}
\nc{\bbX}{{\mathbb X}}
\nc{\bbY}{{\mathbb Y}}
\nc{\bbZ}{{\mathbb Z}}
\nc{\calA}{{\mathcal A}}
\nc{\calB}{{\mathcal B}}
\nc{\calC}{{\mathcal C}}
\nc{\calD}{{\mathcal D}}
\nc{\calE}{{\mathcal E}}
\nc{\calF}{{\mathcal F}}
\nc{\calG}{{\mathcal G}}
\nc{\calH}{{\mathcal H}}
\nc{\calI}{{\mathcal I}}
\nc{\calJ}{{\mathcal J}}
\nc{\calK}{{\mathcal K}}
\nc{\calL}{{\mathcal L}}
\nc{\calM}{{\mathcal M}}
\nc{\calN}{{\mathcal N}}
\nc{\calO}{{\mathcal O}}
\nc{\calP}{{\mathcal P}}
\nc{\calQ}{{\mathcal Q}}
\nc{\calR}{{\mathcal R}}
\nc{\calS}{{\mathcal S}}
\nc{\calT}{{\mathcal T}}
\nc{\calU}{{\mathcal U}}
\nc{\calV}{{\mathcal V}}
\nc{\calW}{{\mathcal W}}
\nc{\calX}{{\mathcal X}}
\nc{\calY}{{\mathcal Y}}
\nc{\calZ}{{\mathcal Z}}

\nc{\scrA}{{\mathscr A}}
\nc{\scrE}{{\mathscr E}}
\nc{\scrR}{{\mathscr R}}

\nc{\Bmu}{\mbox{$\raisebox{-0.59ex}{$l$}\hspace{-0.18em}\mu\hspace{-0.88em}\raisebox{-0.98ex}{\scalebox{2}{$\color{white}.$}}\hspace{-0.416em}\raisebox{+0.88ex}{$\color{white}.$}\hspace{0.46em}$}{}}

\nc{\bnu}{{\bar{ \nu}}}

\nc{\olO}{\bar{\calO}}

\nc{\al}{{\alpha}} 
\nc{\be}{{\beta}}
\nc{\ga}{{\gamma}} \nc{\Ga}{{\Gamma}}
 \nc{\hGa}{\hat{\Gamma}}
\nc{\ve}{{\varepsilon}} 
\nc{\la}{{\lambda}} \nc{\La}{{\Lambda}}
\nc{\om}{\omega} \nc{\Om}{\Omega} 
\nc{\sig}{{\sigma}} \nc{\Sig}{{\Sigma}}

\nc{\tnb}{\psi_{\rm tame}}
\nc{\oM}{\overline{{M}}}
\nc{\op}{{\on{op}}}
\nc{\ad}{{\on{ad}}}
\nc{\alg}{{\on{alg}}}
\nc{\Ad}{{\on{Ad}}}
\nc{\Adm}{{\on{Adm}}} \nc{\aff}{{\on{aff}}}
\nc{\Aut}{{\on{Aut}}}
\nc{\Bun}{{\on{Bun}}}
\nc{\cha}{{\on{char}}}
\nc{\der}{{\on{der}}}
\nc{\Der}{{\on{Der}}}
\nc{\diag}{{\on{diag}}}
\nc{\End}{{\on{End}}}
\nc{\Fl}{{\calF\!\ell}}
\nc{\Tr}{{\on{Transp}}}
\nc{\TR}{{\calT\!\calR}}
\nc{\Gal}{{\on{Gal}}}
\nc{\Gr}{{\on{Gr}}}
\nc{\rH}{{\on{H}}}
\nc{\Hom}{{\on{Hom}}}
\nc{\IC}{{\on{IC}}}
\nc{\id}{{\on{id}}}
\nc{\Id}{{\on{Id}}}
\nc{\ind}{{\on{ind}}}
\nc{\Ind}{{\on{Ind}}}
\nc{\Lie}{{\on{Lie}}}
\nc{\Pic}{{\on{Pic}}}
\nc{\pr}{{\on{pr}}}
\nc{\Res}{{\on{Res}}}
\nc{\res}{{\on{res}}} \nc{\Sat}{{\on{Sat}}}
\nc{\s}{{\on{sc}}}
\nc{\drv}{{\on{der}}}
\nc{\sgn}{{\on{sgn}}}
\nc{\Spec}{{\on{Spec}}}\nc{\Spf}{\on{Spf}} 
\nc{\Sph}{\on{Sph}}
\nc{\St}{{\on{St}}}
\nc{\tr}{{\on{tr}}}
\nc{\Mod}{{\mathrm{-Mod}}}
\nc{\Hilb}{{\on{Hilb}}} 
\nc{\Ext}{{\on{Ext}}} 
\nc{\vs}{{\on{Vec}}}
\nc{\ev}{{\on{ev}}}
\nc{\nO}{{\breve{\calO}}}
\nc{\tS}{{\tilde{S}}}
\nc{\spe}{{\on{sp}}}
\nc{\loc}{{\on{loc}}}
\nc{\Sym}{{\on{Sym}}}
\nc{\Cone}{{\on{C}}}
\nc{\syn}{{\on{syn}}}
\nc{\reg}{{\on{reg}}}
\nc{\colim}{{\on{colim}}}
\nc{\Norm}{{\on{N}}}

\nc{\nscrR}{{\mathscr{R}^{\on{nr}}}}

\nc{\GL}{{\on{GL}}}
\nc{\U}{{\on{U}}}
\nc{\Gl}{\on{Gl}} 
\nc{\GSp}{{\on{GSp}}}
\nc{\gl}{{\frakg\frakl}}
\nc{\SL}{{\on{SL}}} 
\nc{\SU}{{\on{SU}}} 
\nc{\SO}{{\on{SO}}}
\nc{\PGL}{{\on{PGL}}}

\nc{\Conv}{{\on{Conv}}}
\nc{\Rep}{{\on{Rep}}}
\nc{\Dom}{{\on{Dom}}}
\nc{\act}{{\on{act}}}
\nc{\nr}{{\on{nr}}}
\nc{\ctf}{{\on{ctf}}}

\nc{\str}{{\on{-}}} 
\nc{\os}{{\bar{s}}}
\nc{\oeta}{{\bar{\eta}}}

\nc{\Def}{\mathrm{Def}}

\nc{\hookto}{\hookrightarrow}
\nc{\longto}{\longrightarrow}
\nc{\leftto}{\leftarrow}
\nc{\onto}{\twoheadrightarrow}
\nc{\lonto}{\twoheadleftarrow}
\newcommand*\isomto{%
  \renewcommand{\arraystretch}{0.1}
  \begin{array}[b]{c} {}_{\sim} \\ \longrightarrow \end{array}%
}

\nc{\uG}{{\underline{G}}}
\nc{\uA}{{\underline{A}}}
\nc{\uS}{{\underline{S}}}
\nc{\uT}{{\underline{T}}}
\nc{\uM}{{\underline{M}}}
\nc{\uP}{{\underline{P}}}
\nc{\uB}{{\underline{B}}}
\nc{\uN}{{\underline{N}}}

\nc{\ucG}{{\underline{\calG}}}
\nc{\ucA}{{\underline{\calA}}}
\nc{\ucS}{{\underline{\calS}}}
\nc{\ucT}{{\underline{\calT}}}
\nc{\ucalM}{{\underline{\calM}}}
\nc{\ucP}{{\underline{\calP}}}
\nc{\ucalN}{{\underline{\calN}}}

\nc{\bF}{{\breve{F}}}

\nc{\oFl}{{\overline{\Fl}}} 
\nc{\bU}{{\overline{U}}}
\nc{\tGr}{{\tilde{\Gr}}}
\nc{\cGr}{\calG\! r}
\nc{\oGr}{\overline{\on{Gr}}} 
\nc{\ocGr}{\overline{\calG\! r}}
\nc{\co}{{\colon}}
\nc{\sch}[1]{(Sch/{#1})}
\nc{\HypLoc}[1]{HypLoc({#1})}

\nc{\ohtimes}{\stackrel{!}{\otimes}}
\nc{\boxtilde}{\widetilde{\boxtimes}}
\nc{\vstar}{{\varhexstar}}

\nc{\Div}{\on{Div}}
\nc{\Sht}{\on{Sht}}
\nc{\Frob}{\on{Frob}}

\nc{\Pan}{\on{Pan}}

\nc{\x}{\times}
\nc{\bsl}{\backslash}
\nc{\algQl}{{\bar{\bbQ}_\ell}}
\nc{\sF}{{\bar{F}}}
\nc{\nF}{{\breve{F}}}
\nc{\nW}{{W^{\on{nr}}}}
\nc{\sk}{{\bar{k}}}
\nc{\cont}{\on{c}}
\nc{\Supp}{\on{Supp}}
\nc{\blt}{\bullet}  
\nc{\dom}{\on{dom}}
\nc{\scon}{{\on{sc}}} 
\nc{\Affine}{\on{Aff}} 
\nc{\nscrA}{\mathscr{A}^{\on{nr}}} 
\nc{\nfraka}{{\bbf^{\on{nr}}}}
\nc{\ran}{{\rangle}}
\nc{\lan}{{\langle}}
\nc{\bk}{{\bar{k}}}
\nc{\tF}{{\tilde{F}}}
\nc{\sS}{{\bar{S}}}
\nc{\LG}{{^\text{L}\hspace{-0.04cm}G}}
\nc{\LL}{{^\text{L}\hspace{-0.07cm}L}}
\nc{\et}{{\text{\rm \'et}}}
\nc{\inv}{{\on{inv}}}
\nc{\Hecke}{{\on{Hecke}}}
\nc{\Isom}{{\on{Isom}}}
\nc{\oSht}{{\overline{\on{Sht}}}}
\nc{\umu}{{\underline \mu}}
\nc{\AIJ}{{\calO_X[{\scriptstyle{\calI\over \calJ}}]}}
\nc{\Proj}{{\on{Proj}}}
\nc{\Bl}{{\on{Bl}}}
\nc{\Stab}{{\on{Stab}}}
\nc{\cl}{{\on{cl}}}

\nc{\Pos}{{\on{Pos}}}
\nc{\Sets}{{\on{Sets}}}
\nc{\AffSch}{{\on{AffSch}}}
\nc{\Groups}{{\on{Groups}}}
\nc{\Gpds}{{\on{Groupoids}}}
\nc{\Sch}{{\on{Sch}}}
\nc{\fl}{{\on{flat}}}

\nc{\pot}[1]{ [\hspace{-0,5mm}[ {#1} ]\hspace{-0,5mm}] }
\nc{\rpot}[1]{ (\hspace{-0,7mm}( {#1} )\hspace{-0,7mm}) }

\nc{\defined}{\hspace{0.1cm}\stackrel{\text{\tiny \rm def}}{=}\hspace{0.1cm}}

\usepackage{moresize}
\usepackage{graphicx}

\setcounter{tocdepth}{1}

\usepackage{xcolor}

\newenvironment{talign*}
 {\csname align*\endcsname}
 {\endalign}

\begin{document}

\title
[Multi-centered deformation spaces ] 
{A polyptych of multi-centered deformation spaces}

\shortauthors{Adrien Dubouloz and Arnaud Mayeux }

\author{Adrien Dubouloz}

\email{adrien.dubouloz@math.cnrs.fr}

\address{
CNRS, Université de Poitiers, LMA, Poitiers, France. \newline
Université Bourgogne Europe, CNRS, IMB UMR 5584, 21000 Dijon, France.
}

\author{Arnaud Mayeux}

\email{mayeux@wisc.edu}

\address{University of Wisconsin–Madison, Madison, Wisconsin 53706, United States of America.}

\classification{}
\keywords{deformation spaces, double deformation spaces, multi-centered deformation spaces, multi-centered dilatations}
\maketitle 
\begin{center}
  {\it \large with three appendices, including one joint with} {\it \large Sylvain Brochard\footnote[1]{Institut Montpelliérain Alexander Grothendieck, Université de Montpellier, France}} 
\end{center}

\abs Extending Verdier's deformation space to the normal cone of a closed subscheme and Rost's double deformation space of a pair of nested closed subschemes, we introduce a notion of deformation spaces attached to chains of immersions of arbitrary lengths $n$. One main result, which builds on the formalism of multi-centered dilatations of schemes, is the existence of so-called panelization isomorphisms, which produce under suitable regularity conditions several canonical isomorphisms between a given deformation space of length $n$ and some deformation spaces of smaller lengths. Having these panelization isomorphisms also allows to give geometric descriptions of the strata --certain restrictions of special interest-- of deformation spaces. 
\tableofcontents

\ackn
 We warmly thank Frédéric Déglise for his inspirational support, which greatly motivated us throughout this work. This project has received funding from the ISF A.M. grant 1577/23 and support from the ANR Grant ``HQ-Diag'' ANR-21-CE40-0015. We are grateful
to Sylvain Brochard and Niels Feld for several discussions and comments.
\xackn 
\section*{Introduction}
 The purpose of this article is to introduce and study a notion deformation space for chains of immersions of closed subschemes of arbitrary lengths, which recovers as special cases the deformation space to the normal cone of a closed subscheme as defined by Verdier \cite{Ver76} and Rost double deformation space of a pair of nested closed subschemes \cite{Ro96}. 
 
For a closed subscheme $Y$ of a scheme $X$ with defining ideal sheaf $\mathcal{I}\subset \mathcal{O}_X$, the deformation space $\mathrm{Def}(Y,X)$ is a scheme over $\mathbb{A}^1_X$
defined as the relative spectrum of the $\mathcal{O}_X[t]$-subalgebra 
$$ 
\bigoplus _{n\in \mathbb{Z}}\mathcal{I}^nt^{-n} \subset \mathcal{O}_X[t^{\pm 1}],
$$
where, by convention, $\mathcal{I}^n=\mathcal{O}_X$ for $n<0$. The structure morphism $\sigma:\mathrm{Def}(Y,X)\to \mathbb{A}^1_X$ expresses $\mathrm{Def}(Y,X)$ as the dilatation (also called affine blow-up or affine modification \cite{Du05, MRR20}) of $\mathbb{A}^1_X$ with center consisting of the closed subscheme $\mathbb{A}^1_Y\subset \mathbb{A}^1_X$ and the divisor $D$ equal to the image of the zero section $\{0\}:X\to \mathbb{A}^1_X$, see e.g. \cite{ADO}. The fact that $\mathrm{Def}(Y,X)$ fits into a pair of cartesian diagrams 
$$
\xymatrix{C_{Y/X} \ar[r] \ar[d] & \mathrm{Def}(Y,X) \ar[d] & \mathbb{G}_{m,X} \ar[l] \ar@{=}[d] \\ X \ar[r]^{\{0\}} & \mathbb{A}^1_X & \mathbb{G}_{m,X} \ar[l] }
$$ 
where $C_{Y/X}=\mathrm{Spec}(\bigoplus_{n\geq0} \mathcal{I}^n/\mathcal{I}^{n+1})$ is the normal cone of $Y$ in $X$, plays a central role in the construction of Gysin homomorphisms for regular closed immersions in several cohomology theories, and in particular in the construction of intersection theory within Chow groups. The double deformation space $\mathrm{Def}(Z,Y,X)$ of a pair of nested closed immersions $Z\subset Y\subset X$ was introduced later on by Rost \cite{Ro96} as scheme over $\mathbb{A}^2_X$ whose purpose was primarily to serve as a tool to verify the associativity of certain  intersection operations on Chow groups.

It was observed in \cite{Ma24d} that in the same way as $\mathrm{Def}(Y,X)$ can be interpreted as a mono-centered dilatation of $\mathbb{A}^1_X$, Rost's double deformation space $\mathrm{Def}(Z,Y,X)$ can be interpreted in a natural way as a double-centered dilatation of $\mathbb{A}^2_X$. Building on this observation, it becomes natural to define a notion of deformation space attached to a chain of nested closed immersions of arbitrary length within the formalism of multi-centered dilatations of schemes of \cite{Ma24d}. As a result, the first part of this paper proposes a general notion of 
multi-centered deformation space of arbitrary length and study its basic properties. Building on ideas and techniques inspired by iterated multi-centered dilatations of schemes we derive from universal properties of multi-centered dilatations \cite{MRR20, Ma24d} the existence of canonical morphisms, called \emph{panelization morphisms}, which allow, under appropriate regularity assumptions, to identify a given multi-centered deformation space with several other multi-centered deformation spaces of smaller length. These panelization morphisms and isomorphisms allow an inductive view on the construction and the study of the properties of multi-centered deformations spaces. We use in turn these different complementary presentations of multi-centered deformation spaces to study the geometry of their \emph{strata}, which are particular closed subschemes analogue in our setting to the restrictions of the deformations space and the double deformation space to the coordinate hyperplanes and their intersections.

The article is organized as follows. Section \ref{secpreli} 
provides a quick recollection on (multi-centered) dilatations of rings and schemes following \cite{Ma24d}. Section \ref{secD} presents the definition of our general multi-centered deformations spaces and collects basic results on these. Section \ref{secSliding} is devoted to the construction of the panelization morphisms of multi-centered deformations spaces  (Proposition \ref{theopanelization-mor}) and the identification of suitable regularity assumptions under which these morphisms are isomorphisms (Theorem \ref{theo-iso-panelization} and Theorem  \ref{coroset}).  
Section \ref{secstrata} discusses the notion of strata of multi-centred deformation spaces, the main results there being Proposition \ref{th:vb-strata}
and Proposition  \ref{higher_strata} which provide, again under appropriate regularity assumptions, complementary descriptions of these strata in the form of  multi-centred deformation spaces of smaller lengths. Section \ref{sec:Verdier-Rost} provides a summary and some applications of the results of the previous sections in the special situation of the ``Verdier-Rost deformation space'' $\mathrm{Def}(Z_n,\ldots, Z_0)$ of a collection of closed immersions $Z_n\subset Z_{n-1}\subset \cdots \subset Z_1\subset Z_0$ between schemes smooth over a locally Noetherian base scheme. 

The article is completed by three appendices: Appendix \ref{appendix:vbsym} examines from the viewpoint of multi-centered dilatations some  symmetric variants of multiple deformation spaces appearing in \cite{Ro96, Ivorra14, Le26}. Appendix \ref{appendix:vb} provides constructions of vector bundles by means of dilatations  which are in particular used to clarify the existence of non-canonical isomorphism between certain strata of multi-centered deformations spaces. Appendix \ref{secap}, joint with Sylvain Brochard, is devoted to establishing several results of independent interest concerning products, intersections and sums of ideals generated by monomials in a regular sequence, which play an important role in the proof of Theorem \ref{coroset} but for which we were unable to find a convenient reference.

 \section{Preliminaries} \label{secpreli}

 \subsection{Closed subschemes, divisors and vector bundles} \label{subsecclo}
 The set of all closed subschemes of a scheme $X$ is denoted $Clo(X)$. If $Y_1 , Y_2$ belong to $Clo(X)$, we write $Y_1 \subset Y_2$ if $Y_1$ is a closed subscheme of $Y_2$. Let $\calM _1$ and $\calM_2$ be the quasi-coherent ideals of $\calO_X$ such that $Y_1= V (\calM _1)$, and $Y_2 = V (\calM _2)$.Then $Y_1+ Y_2$ is defined as $V (\calM _1 \calM_2)$, $Y_1 \cup Y_2$ is defined as $V ( \calM _1 \cap \calM _2)$ and  $Y_1 \cap Y_2$ is defined as $V( \calM_1 + \calM_2)$.
 
 An element $D $ of $Clo (X)$ is called locally principal if its ideal sheaf is locally generated by a single element and an effective Cartier divisor if its ideal sheaf is locally generated by a non-zero-divisor. 
 
 By a vector bundle over a scheme $X$, we mean the relative spectrum $p:\mathbb{V}(\mathcal{E})\to X$ of the symmetric algebra $\mathrm{Sym}^{\bullet}\mathcal{E}$ of a coherent locally free $\mathcal{O}_X$-module $\mathcal{E}$.

 \subsection{Multi-centered dilatations of schemes}\label{subsecdil} 
  We assume the formalism of multi-centered dilatations of schemes as defined and studied in \cite{Ma24d}. For the convenience of the reader, we briefly recall some essential definitions and results.

  A multi-center $\{[Y_i, D_i]\}_{i\in I}$
  on a scheme $X$  is a set of pairs of closed subschemes of $X$ such that each $D_i$, $i\in I$, is locally principal. Following \cite[Definition 3.4]{Ma24d} for a collection $D_I=(D_i)_{i\in I}$ of locally principal closed subschemes of $X$, we denote by $\Sch _{X}^{\{D_i\}_{i \in I} \text{-reg}}$  the full subcategory of schemes over $X$ whose objects are morphisms $f:T \to X$ such that $f^{-1} (D_i)$ is an effective Cartier divisor in $T$ for all $i$. Building on  \cite[Proposition 3.17]{Ma24d}, a multi-centered dilatation of a scheme $X$ can be shortly defined for our purpose through its universal property:

 \begin{thm-def} \label{Def-Multi-centered} The dilatation of a scheme $X$ with multi-center $\{[Y_i, D_i]\}_{i\in I}$ is the  $X$-scheme $\sigma:\Bl \left\{^{D_i}_{Y_i} \right\}_{i \in I} X\to X$ representing the functor $\Sch _{X}^{\{D_i\}_{i\in I} \text{-reg}} \to Set$ defined by
\begin{equation}
\label{eq:functor}
(f: T\to X) \;\longmapsto\; \begin{cases}\left\{*\right\}, \; \text{if $f^{-1} (D_i)\subset f^{-1} \left( Y_i \right) $ in $Clo \left( T \right) $ for every $i \in I  $};\\ \varnothing,\;\text{else.}\end{cases}
\end{equation}
 \end{thm-def}
  
 If $\#I = 0$, then $\Bl \left\{^{D_i}_{Y_i} \right\}_{i \in I} X=X$. If $I= \{1\}$ is a singleton, then $[Y_1 , D_1]$ is called a mono-center and $\Bl \left\{^{D_1}_{Y_1} \right\} X$ is called a mono-centered dilatation.\footnote{Dilatations associated to mono-centers $[Z,D]$ with $Z \subset D$ were defined and studied in many references in specific settings, including \cite{Du05} and \cite{MRR20} for general schemes. We refer to \cite{Ma25} for a construction of dilatations in a categorical setting and to \cite{DMdS23}  for a survey on algebraic dilatations, including many other references.}

We briefly summarize  from \cite[Subsection 2.1]{Ma24d} the construction of the scheme representing  the functor \eqref{eq:functor}
in the local affine setting. Here, $X=\mathrm{Spec}(A)$ for some ring $A$, $Y_i=\mathrm{Spec}(A/M_i)$ for a collection of ideals $M_i\subset A$ and  $D_i=\mathrm{div}(a_i)$ for some elements $a_i\in A$, $i\in I$. Putting $L_i=(M_i+(a_i))$, we endow the set of pairs $(\ell,a^\nu)$, where $\ell \in \textstyle \prod_{i\in I}  L_i^{\nu_i}$ and $a^{\nu}=\prod_{i\in I} a_i^{\nu_i}$ for some multi-index $\nu=(\nu_i)_{i\in I}\in (\mathbb{Z}_{\geq 0})^I$ of natural numbers, with the equivalence relation defined by $(\ell,a^\nu)\sim (\ell',a^{\nu'})$ if  $(\ell a^{\nu'}-\ell' a^{\nu})a^\beta=0$ in $A$ for some multi-index $\beta\in (\mathbb{Z}_{\geq 0})^I$. We let $\tfrac{\ell}{a^{\nu}}$ be the equivalence class of $(\ell,a^\nu)$ and denote by $A\left[\left\{ \tfrac{M_i }{a_i}\right\}_{i \in I}\right]$ the set of all equivalence classes. The formulas $$ \frac{\ell}{a^{\nu}}+\frac{\ell'}{a^{\nu'}}=\frac{\ell a^{\nu'}+\ell' a^{\nu}}{a^{\nu+\nu'}} \quad \textrm{and} \quad \frac{\ell}{a^\nu} \times \frac{\ell'}{a^{\nu'}}=\frac{\ell\ell'}{a^{\nu+\nu'}}$$ 
define addition and multiplication laws on 
$A\left[\left\{ \tfrac{M_i }{a_i}\right\}_{i \in I}\right]$ which make it into a commutative unital ring. The natural map $\sigma^*:A\to A\left[\left\{ \tfrac{M_i }{a_i}\right\}_{i \in I}\right]$, $a\mapsto \tfrac{a}{1}$ is a ring homomorphism,  called the dilatation homomorphism. By \cite[Corollary 2.16]{Ma24d},
the so-defined $A$-algebra $A\left[\left\{\tfrac{M_i}{a_i}\right\}_{i \in I}\right]$ identifies with the sub-algebra of of the localization $A[\{a_i\}_{i \in I}^{-1}]$ of $A$, obtained by adjoining formal inverses $a_i^{-1}$ for all $i \in I$, generated by   $\left\{\tfrac{M_i}{a_i}\right\}_{i \in I}$. 
The \emph{dilatation of $X$ with multi-center $\{[Y_i,D_i]_{i\in I}\}$} is then the spectrum $\Bl \left\{^{D_i}_{Y_i} \right\}_{i \in I} X$ of $A\left[\left\{ \frac{M_i }{a_i}\right\}_{i \in I}\right]$ and the morphism $\sigma:\Bl \left\{^{D_i}_{Y_i} \right\}_{i \in I} X\to X$ corresponding to $\sigma^*$ is called the dilatation morphism. 

\section{Multi-centered deformation spaces} \label{secD} 
Let $(I,\leq )$ be a finite totally ordered set. Every subset $J$ of $I$ is considered as endowed with the total ordering induced from $I$. Moreover, for every element $i\in I$, we let $$J_{\geq i} = \{ j \in J | i \leq j \} \textrm{ and } J_{>i} = \{ j \in J | i <j \}.$$

\subsection{ Definition and universal property}
\defi \label{defdatum}\label{defimultidef} A \emph{deformation datum} on a scheme $X$ indexed by a finite totally ordered set $(I,\leq )$ is a pair $\left({}^{D_I}_{X_I}\right)=\left({}^{D_i}_{X_i}\right)_{i\in I}$
consisting of an order reversing map $$X_I:(I,\leq)\to Clo(X,\subset): i\mapsto X_I(i)=X_i \quad \textrm{(if $i\geq i' $, then $X_i \subset X_{i'}$)}$$ and a collection $D_I=\{D_i\}_{i\in I}$ of locally principal closed subschemes of $X$. 
\xdefi

\defi 
\label{def:mc-def}The \emph{multi-centered deformation space} of a deformation datum $\left({}^{D_I}_{X_I}\right)$ on a scheme $X$ is the $X$-scheme
\begin{equation} 
\label{sigma_I} 
\sigma_I:\bbD_I =\bbD \left(\left({}^{D_I}_{X_I}\right) {}_{X} \right)=\bbD \left(\left({}^{D_i}_{X_i}\right)_{i\in I} {}_{X} \right) := \Bl \left\{{}^{\sum _{j\in I_{\geq i}} D_j}_{X_i}  \right\}_{i \in I} X\to X,
\end{equation}
that is, the dilatation of $X$ with multi-center $\{[X_i, \sum _{j \in I_{\geq i}} D_j ]\}_{i \in I}$. 
\xdefi

Having an abstract index set $I$ will be convenient to state and prove the main results, but in practice, there is no loss of generality to identify $I $ with the totally ordered set $ \{n \geq i \geq 1\}$ where $n= \# I$. A deformation datum is then a chain $X_n \subset \ldots \subset X_1 \subset X$ of closed subschemes together with locally principal closed subschemes $D_n , \ldots , D_1$ of $X$.
In this case, we will denote the associated multi-centered deformation space by 
\begin{equation}\label{eq:ordered-notation}
\bbD \big (^{D_n, \ldots, D_1} _{X_n,  \ldots, X_1  ~ X } \big)= \Bl \left\{ _{X_i}^{\sum _{j =i}^n D_j }\right\}_{n \geq i \geq 1} X.
\end{equation} 

\medskip

 \prop \label{Propunivdef}(Universal property of deformation spaces) 
 The multi-centered deformation space $\sigma_I:\bbD \left(\left({}^{D_I}_{X_I}\right) {}_{X} \right)\to X$ represents the contravariant functor 
 $\delta : \Sch _{X}^{\{D_i\}_{i\in I} \text{-reg}} \to Sets$ defined by
\begin{equation}\label{blow.up.iso.eq}
(f: T\to X) \;\longmapsto\; \begin{cases}\left\{*\right\}, \; \text{if $f^{-1} \left(\sum _{s\in I_{\geq i}} D_s \right) \subset f^{-1} \left( X_i \right) $ in $Clo \left( T \right) $ for every $i \in I  $};\\ \varnothing,\;\text{else.}\end{cases}
\end{equation}
 \xprop
 \pf
 The categories $\Sch _{X}^{\{D_i\}_{i \in I}  \text{-reg}}$ and $\Sch _{X}^{\left\{\sum _{j \in I _{\geq i}} D_j \right\}_{i \in I}\text{-reg}}$ being equal by \cite[Fact 3.5]{Ma24d}, the assertion is an immediate consequence of Definition \ref{defimultidef} and \cite[Proposition 3.17]{Ma24d}.
 \xpf 
 
 \coro \label{coronumberofmor}
If $T \to X $ is an object in the category $\Sch _{X}^{\{D_i\}_{i\in I} \text{-reg}}$, we have \[\#\Hom _X \left( T, \bbD \left(\left({}^{D_I}_{X_I}\right) {}_{X} \right)\right) \in \left\{0,1\right\}.\]
 \xcoro
 \pf
 Immediate from Proposition \ref{Propunivdef}.
 \xpf

 \coro \label{cor:isocomplement} With the notation above, the open subset $\bbD_I\setminus\sigma_I^{-1}(\sum_{i\in I}D_i)$ of $\bbD_I$ is Zariski dense and the restriction $\sigma_I:\bbD_I\setminus\sigma_I^{-1}(\textstyle \sum_{i\in I}D_i)\to X\setminus(\textstyle \sum_{i\in I} D_i)$
 is an isomorphism. 
 \xcoro
 \pf Put $U_I=X\setminus(\textstyle \sum_{i\in I} D_i)$. The property that $\sigma_I^{-1}(U_I)=\bbD_I\setminus\sigma_I^{-1}(\textstyle \sum_{i\in I}D_i)$ is Zariski 
 dense follows from the fact that $\sigma_I^{-1}(\sum_{i\in I}D_i)$ is an effective Cartier divisor on $\bbD_I$. Since the inclusion morphism $f:U_I\to X$ belongs to  $\Sch _{X}^{D_I \text{-reg}}$  and since  $f^{-1}(\sum_{j\in I_{\geq i}} D_j)=\emptyset \subset f^{-1}(X_i)$ for every $i\in I$, Proposition \ref{Propunivdef} 
 implies the existence of a unique $X$-morphism $\hat{f}:U_I\to\bbD_I$, which is then necessarily an isomorphism onto its image $\sigma_I^{-1}(U_I)$.
 \xpf

 \subsection{Base change and restriction of index sets} 
 
 Let $\left({}^{D_I}_{X_I}\right)$ be a deformation datum on a scheme $X$ and let $f:X'\to X$ be an $X$-scheme. We then have an induced order reversing  map $$X'_I:(I, \leq) \to Clo (X', \subset ), i\mapsto X'_I(i):=X_i\times_X X'.$$ On the other hand, $D'_i:=f^{-1}D_i$ is a locally principal closed subscheme of $X'$ for every $i\in I$. 
 \defi \label{warning} The deformation datum $\left({}^{D'_I}_{X'_I}\right)$ is called the base change of $\left({}^{D_I}_{X_I}\right)$
 by $f:X'\to X$. We denote it by $f^{-1}\left({}^{D_I}_{X_I}\right)=\left({}^{D_I}_{X_I}\right)\times_X X'$ or, for short, simply by $\left({}^{D_I}_{X_I}\right)\!|_{{}_{X'}}$.
 \xdefi

In general, the formation of the multiple deformation space does not commute with base change. Nevertheless, we have the following sufficient criterion: 

 \lemm \label{lem:base-change} Let  $\left({}^{D_I}_{X_I}\right)$ be a deformation datum on a scheme $X$ and let $f:X'\to X$ be an $X$-scheme. Then there exists a unique morphism of $X$-schemes $$\bbD \left( \left({}^{D_I}_{X_I}\right)|_{X'}{~}_{X'}\right)\to \bbD \left( \left({}^{D_I}_{X_I}\right){~}_{X}\right).$$
 Moreover, if $\bbD \left( \left({}^{D_I}_{X_I}\right){~}_{X}\right)\times _X X'$ belongs to the category $\mathrm{Sch}_{X'}^{\{D_i|_{X'}\}_{i\in I}\textrm{-reg}}$ then the induced morphism of $X'$-schemes 
 $$\bbD \left( \left({}^{D_I}_{X_I}\right)|_{X'}{~}_{X'}\right)\to \bbD \left( \left({}^{D_I}_{X_I}\right){~}_{X}\right)\times_X X'$$
 is an isomorphism.
 \xlemm
\pf 
The composition  $h:=f\circ \tau:  \bbD \left( \left({}^{D_I}_{X_I}\right)|_{X'}{~}_{X'}\right)\to X'\to X$,  where $\tau$ is the dilatation morphism, belongs to the category $\mathrm{Sch}_X^{\{D_i\}_{i\in I}\textrm{-reg}}$. Since by definition of $\tau$, we have, for all $i\in I$,  $$\textstyle  h^{-1}(\sum_{s\in I_{\geq i}} D_s)=\tau^{-1}(f^{-1} (\sum_{s\in I_{\geq i}} D_s))=\tau^{-1}(\sum_{s\in I_{\geq i}} f^{-1}D_s) \subset \tau^{-1}(f^{-1}(X_i))=h^{-1}(X_i),$$
the existence and uniqueness 
of the claimed morphism follow from Proposition \ref{Propunivdef}. The second assertion follows from  \cite[Lemma 3.36]{Ma24d}.
\xpf
\exam With the notation above, assume that $f:X'\to X$ and that $D_i$ is Cartier for every $i\in I$. Then $D_i|_{X'}$ is Cartier as well by  \cite[\href{https://stacks.math.columbia.edu/tag/02OO}{Tag 02OO}]{stacks-project} and the induced morphism  $\bbD \left( \left({}^{D_I}_{X_I}\right)|_{X'}{~}_{X'}\right)\to \bbD \left( \left({}^{D_I}_{X_I}\right){~}_{X}\right)\times_X X'$ is an isomorphism.
\xexam

 Let $\left({}^{D_I}_{X_I}\right)$ be a deformation datum on a scheme $X$.
For every subset $J\subset I$, restricting the index set to $J$ determines a deformation datum $\left({}^{D_J}_{X_J}\right)=\left({}^{D_j}_{X_j}\right)_{j\in J}$ on $X$. Letting  
\begin{equation}
    \sigma_I:\bbD_I:=\bbD \left( \left({}^{D_I}_{X_I}\right){~}_{X}\right)\to X \; \textrm{and} \;   \sigma_J:\bbD_J:=\bbD \left( \left({}^{D_J}_{X_J}\right){~}_{X}\right)\to X
\end{equation}
be the dilatation morphisms, we have the following:

\lemm \label{lem:nu_IJ} There exists a unique morphism of $X$-schemes 
\begin{equation} \label{eq:upsilon_ij}
\upsilon_{I,J}: \bbD_I \to \bbD_J. 
\end{equation}
Moreover,  $\upsilon_{I,J}:\bbD_I\to \bbD_J$ belongs to the category 
$\Sch_{\bbD_J}^{\{\sigma_J^{-1}D_{i}\}_{i\in I\setminus J} \text{-reg}}$.
\xlemm
\pf Since $\bbD_I\in \Sch_X^{\{D_j\}_{j\in J}\textrm{-reg}}$ and since, by definition of $\sigma_I$, we have, for every $j\in J$, $$\textstyle \sigma_I^{-1}(\sum_{s\in J_{\geq j}} D_s)\subset \sigma_I^{-1}(\sum_{s\in I_{\geq j}} D_s) \subset \sigma_I^{-1}(X_j),$$ the existence and uniqueness follow from 
Proposition \ref{Propunivdef}
.The second assertion is immediate.
\xpf

\subsection{Induced multi-centered deformation spaces and panels} \label{subsec:panels}
Let  $\left({}^{D_I}_{X_I}\right)$ be a deformation datum on a scheme $X$ and let $J$ be a subset of $I$. For every element $i\in I\setminus J$, the base change  $ \left({}^{D_J}_{X_J}\right)\!|_{_{X_i}}$ is a deformation datum on the closed subscheme $X_i$ of $X$, with associated multi-centered deformation space 
\begin{equation} \label{induced_1}
\sigma_{J,i}:(\bbD_{J})_i:= \bbD \left( \left({}^{D_J}_{X_J}\right)\!|_{_{X_i}} {~}_{X_i}\right)\to X_i.
\end{equation}
We let $(\sigma_{J})_i:(\bbD_J)_i\to X$  be the composition of $\sigma_{J,i}$ with the inclusion $X_i\subset X$. The inclusion $X_{i'}\subset X_i$ as closed subschemes of $X$ for every pair 
of elements $i< i'$ in $I\setminus J$ implies that $ \left(\left({}^{D_J}_{X_J}\right)\!|_{_{X_i}}\right)\!|_{X_{i'}}=\left({}^{D_J}_{X_J}\right)\!|_{_{X_i'}}$ and that   $(\sigma_{J})_{i'}$ factors through the inclusion $X_{i'}\subset X_i$. 

\prop \label{factfks} With the notation above, the following hold: 
\begin{enumerate}
 \item For all $i \in I\setminus J$, there exists a unique $X$-morphism 
\begin{equation} \label{eqfks}
F_J(i) : (\bbD_J)_i \to \bbD_J.
\end{equation}
Moreover, $F_J(i)$ is a closed immersion.
\item For all $i< i'$ in $I\setminus J$, there exists a unique $X$-morphism 
\begin{equation} \label{eqfks2}
F_J(i',i) : (\bbD_J)_{i'} \to (\bbD_J)_i. 
\end{equation}
Moreover $F_J(i',i)$ is a closed immersion and $F_J(i')=F_J(i)\circ F_J(i',i)$.
\end{enumerate}
\xprop
 
 \pf Since, by definition of $(\bbD_J)_i$, we have $$(\sigma_J)_i^{-1}(\sum_{s\in J_{\geq j}}D_s)=\sigma_{J,i}^{-1}(\sum_{s\in J_{\geq j}}D_s|_{X_i})\subset \sigma_{J,i}^{-1}(X_j|_{X_i})=(\sigma_J)_i^{-1}(X_j),$$ 
 the existence and uniqueness of the desired morphism $F_J(i):(\bbD_J)_i\to \bbD_J$ follow from  Proposition \ref{Propunivdef}. Since the projection $\mathrm{pr}_1:  \bbD_J\times_X X_i\to \bbD_J$ is a closed immersion, $F_J(i)$ is a closed immersion provided that the canonically induced $X_i$-morphism $\tilde{F}_J(i):(\bbD_J)_i\to \bbD_J\times_X X_i$ is a closed immersion. The latter properties being local on $X$ in the Zariski topology, the fact that $\tilde{F}_J(i)$ is a closed immersion follows from \cite[Proposition 2.41]{Ma24d}. 
 The existence of $F_J(i',i)$ for all $i<i'$ follows from Proposition \ref{Propunivdef} and the equality $F_J(i')=F_J(i)\circ F_J(i',i)$ from Corollary \ref{coronumberofmor}. Since $F_J(i')$ and $F_J(i)$ are both closed immersions, so is $F_J(i',i)$.
 \xpf
 
 By Proposition \ref{factfks}, for every $i<i'$ in $I\setminus J$, we have a commutative diagram 
 $$ 
 \begin{tikzcd}
  (\bbD_J)_{i'} \ar[r, "{F_J(i',i)}"] \ar[d,swap, "\sigma_{J,i'}"] & (\bbD_J)_i \ar[r,"F_J(i)"] \ar[d,"\sigma_{J,i}"] & \bbD_J \ar[d,swap, "\sigma_J"] \\ X_{i'} \ar[r] & X_i \ar[r] & X.
 \end{tikzcd}
 $$
 in which the top and bottom rows consist of closed immersions. 
Identifying each $(\bbD_J)_i$, $i\in I\setminus J$, with its image in $\bbD_J$, it follows that the association 
$$(\bbD_J)_{I\setminus J}:(I\setminus J,\leq)\to Clo(\bbD_J, \subset), \; i\mapsto (\bbD_J)_{I\setminus J}(i):=(\bbD_J)_i$$
is an order reversing map. 
For every $i\in I\setminus J$, the inverse image of $D_i$ by the dilatation morphism  $\sigma_J:\bbD_J \to X$ 
is a locally principal divisor $\sigma_J^{-1}D_i$ on $\bbD_J$. We put $\sigma_J^{-1}D_{I\setminus J}=(\sigma_J^{-1}D_i)_{i\in I\setminus J}$.

\defi \label{def:panel-datum} 
For every subset $J\subset I$, the multi-centered deformation space
\begin{equation}\label{tau-K-J}
\tau_{J}: \bbD\bbD_J:=
\bbD \left( \left({}^{\sigma_J^{-1}D_{I\setminus J}}_{(\bbD_J)_{I\setminus J}}\right) {~}_{\bbD_J}\right)=\bbD \left( \left({}^{\sigma_J^{-1}D_{i}}_{(\bbD_J)_{i}}\right)_{i\in I\setminus J} {~}_{\bbD_J}\right) \to \bbD_J 
\end{equation}
of  $\bbD_J= \bbD  \left(\left({}^{D_J}_{X_J}\right) {~}_X\right)$
is called the \emph{$J$-th panel} of the multi-centered deformation space $\bbD_I$.
\xdefi

\subsection{Local affine models}\label{subsec:affine}
We collect descriptions and basic properties of local affine versions of the global notions introduced in the previous subsections, which we reformulate in the equivalent setting of multi-centered dilatations of rings of \cite[Section  2]{Ma24d}. In what follows, $X=\mathrm{Spec}(A)$ is an affine scheme and the deformation  datum  $\left({}^{D_I}_{X_I}\right)$ on $X$ consists of closed subschemes $X_i=\mathrm{Spec}(A/M_i)$ for some ideals $M_i\subset A$, $i\in I$,  such that $M_{i}\subset M_{i'}$ for all $i'\geq i$ in $I$, and principal divisors $D_i=\mathrm{div}(d_i)$ for some elements $d_i\in A$, $i\in I$.  

For every subset $J\subset I$, the multi-centered deformation spaces $\sigma_J:\bbD_J\to X$ 
is then the spectrum of the $A$-algebra 
\begin{equation}
    \sigma_J^*:A\to A\left[ \left\{ \frac{M_{j}}{\prod_{s \in J_{\geq j}} d_s}\right\}_{j \in J} \right]:=R_J,   
\end{equation} 
where $\sigma_J^*$ is the dilatation homomorphism mapping an element $a$ to the element  $\tfrac{a}{1}$ of $R_J$. 

The universal property of $\sigma_J:\bbD_J\to X$ in Proposition \ref{Propunivdef} then translates in particular into the property that for an $A$-algebra  $f^*:A\to B$ such that $f^*(d_s)$ is  not a zero divisor in $B$ and such that for every $j\in J$, $f^*(M_j)$ is contained in the ideal $(f^*(\prod_{s\in J_{\geq j}} d_s))$ of $B$, $f^*$ factors as $\hat{f}^*\circ \sigma_J^*$ for a unique $A$-algebra homomorphism $\hat{f}^*:R_J\to B$.

The morphism $\upsilon_{I,J}:\bbD_I\to \bbD_J$ in Lemma \ref{lem:nu_IJ} corresponds to the $A$-algebra homomorphism
\begin{equation}
    \upsilon_{I,J}^*:R_J=     A\left[ \left\{ \frac{M_{j}}{\prod_{s \in J_{\geq j}} d_s}\right\}_{j \in J} \right]\to  A\left[ \left\{ \frac{M_{i}}{\prod_{s \in I_{\geq i}} d_s}\right\}_{i \in I}\right]=R_I
\end{equation}
which maps an element  
\begin{equation}\label{eq:r_j}
r=\frac{\prod_{j\in J} \ell_j}{\prod_{j\in J}(\prod_{s\in J_{\geq j}}d_s)^{\nu_j}}\in R_J,
\end{equation}
where $\ell_j\in (M_j+(\prod_{s\in J_{\geq j}} d_s))^{\nu_j}$, to the element 
$$ \upsilon_{I,J}^*(r):=\frac{\prod_{j\in J}\ell_j(\prod_{s\in (I\setminus J)_{\geq j}}d_s)^{\nu_j}}{\prod_{j\in J}(\prod_{s\in I_{\geq j}}d_s)^{\nu_j}}\in R_I.$$

The closed immersion  $F_J(i):(\bbD_J)_i\to \bbD_J$ in Proposition \ref{factfks} corresponds to the surjective homomorphism 
\begin{equation}
\label{eq:f_ij-def} 
\textstyle 
F_J(i)^*:   R_J=
A\left[ \left\{ \frac{M_{j}}{\prod_{s \in J_{\geq j}} d_s}\right\}_{j \in J} \right] \to A/M_{i} \left[ \left\{ \frac{M_{j} +M_i}{\prod_{s \in J_{\geq j}} \pi_i(d_s)}\right\}_{j \in J} \right],
  \end{equation}     
where $\pi_i:A\to A/M_i$ is the quotient morphism, which maps an element  $r\in R_J$ as in \eqref{eq:r_j} above to the element 
$$F_J(i)^*(r)=\frac{\prod_{j\in J}\pi_i(\ell_j)}{\prod_{j\in J}\left(\prod_{s\in J_{\geq j}} \pi_i(d_s)\right)^{\nu_j}}.$$

Finally, the morphism  $\tau_J:\bbD\bbD_J\to \bbD_J$ of Definition \ref{def:panel-datum} corresponds to the dilatation homomorphism of $R_J$-algebras 
\begin{equation}
    \tau_J^*: R_J \to R_J \left[ \left\{ \frac{\ker \left(F_J(i)^* \right) }{\sigma_J^*(\prod _{s \in (I\setminus J)_{\geq i}}d_s)}\right\}_{i\in I\setminus J}\right]:=B_J.
\end{equation}

\medskip 

As a preparation for the next section, we now establish basic results concerning the kernel  $\ker(F_J(i)^*)$ of the homomorphism $F_J(i)^*$. A first observation is the following: 
\lemm \label{lem:KerfJi-inclusion} For every $i\in I\setminus J$, the following inclusion of ideals holds in $R_J$:
 \begin{equation} \label{eq:Kerf_Ji-inclusion}
\left(\frac{M_i}{\prod_{s\in J_{\geq i}}d_s}+\sum_{j\in J} \frac{M_j\cap M_i}{\prod_{s\in J_{\geq j}} d_s}\right)\subset \ker(F_J(i)^*).
 \end{equation}    
\xlemm
\pf
First observe that the notation $\frac{M_i}{\prod_{s\in J_{\geq i}}d_s}$ makes sense as a subset of $R_J$. Indeed, if $J_{\geq i}=\emptyset$ then $\prod_{s\in J_{\geq i}}d_s=1$ and $\frac{M_i}{1}=\sigma_J^*(M_i)$. Otherwise, if $J_{\geq i}\neq \emptyset$ then letting $j_0=\min\{J_{\geq i}\}$ we have $M_i=M_{j_0}\cap M_i$ and $J_{\geq i}=J_{\geq j_0}$ so that $\frac{M_i}{\prod_{s\in J_{\geq i}}d_s}=\frac{M_{j_0}\cap M_i}{\prod_{s\in J_{\geq j_0}}d_s}$ which is well-defined subset of $R_J$. Now the facts that the subsets 
$\frac{M_i}{\prod_{s\in J_{\geq i}}d_s}$
and $\frac{M_j\cap M_i}{\prod_{s\in J_{\geq j}} d_s}$ for all $j\in J$ are contained in $\ker(F_J(i)^*)$ is clear from the definition of $F_J(i)^*$.      
\xpf

\lemm \label{lem:Kerf_Ji-Cartier} With the notation above, assume  that for every index $j\in J$ the image $\pi_i(d_j)$ of $d_j$ in $A/M_i$ is not a zero divisor. Then 
\begin{equation}\label{YHJ85-1} \ker (F_J(i)^*) = \sum _{\nu=(\nu_j) \in (\mathbb{Z}_{\geq 0})^J} \frac{\left(\prod_{j \in J}\left(M_j + (\prod_{s \in J_{\geq j}} d_s)\right)^{\nu _j}\right)\cap M_i}{\prod _{j \in J}\left(\prod_{s \in J_{\geq j}} d_s\right) ^{\nu _j}}.  
 \end{equation}
\xlemm
\pf This is immediate from the definition of $R_J$ and Proposition  \ref{propnoyauappendix} below. 
\xpf

In the proof of Lemma \ref{lem:Kerf_Ji-Cartier}, we used the following variant of \cite[Proposition 2.39]{Ma24d}.
\prop \label{propnoyauappendix} Let $A$ be a ring and $I$ be a set. Let $\{ M_i \}_{i \in I}$ be ideals of $A$ and $\{a_i\}_{i \in I}$ be elements of $A$. Put $L_i=(M_i+(a_i))$ 
and for a multi-index $\nu=(\nu_i)\in (\mathbb{Z}_{\geq 0})^I$, put $L^\nu=\prod_{i\in I}L_i^{\nu_i}$ and $a^\nu=\prod_{i\in I}a_i^{\nu_i}$. Let $T$ be an ideal of $A$ 
such that  we have a commutative diagram of $A$-algebras 
\begin{equation*}
\begin{tikzcd}  A/T \left[\left\{ \frac{M_i +T}{\pi(a_i)}\right\}_{i \in I}\right] &  A/T  \ar[l,swap, "\tau^*"] \\  A\left[\left\{ \frac{M_i}{a_i}\right\}_{i \in I}\right] \ar[u, "\varphi"] &  A \ar[l,swap,  "\sigma^*"]\ar[u,swap,  "\pi"],
\end{tikzcd} 
\end{equation*} 
where $\sigma^*$ and $\tau^*$ are the dilatation  homomorphisms and $\pi$ is the quotient homomorphism. 
Assume $\pi(a_i )$ is a non-zero-divisor in $A/T$ for all $i \in I$.
Then .
$$ \ker (\varphi) = \sum _{\nu \in (\bbZ_{\geq 0})^I} \frac{L^\nu \cap T }{a^\nu}  \subset  A\left[\left\{ \frac{M_i}{a_i}\right\}_{i \in I}\right].$$
\xprop 
\pf Put $\psi=\tau^*\circ \pi$.
The assumption that the elements $\pi (a_i)$, $i \in I$, are not zero-divisors in $A/T$, implies that $\tau^*$ is injective by \cite[Proposition 2.20]{Ma24d}. 
For an element $\frac{\ell}{a^\nu}$ of $ A[\big\{ \frac{M_i}{a_i}\big\}_{i \in I}]$, where $\ell\in L^\nu$, we have 
\[ \psi ({a^\nu} ) \varphi (\frac{\ell}{a^\nu})= \varphi (\sigma^*(a^\nu)) \varphi (\frac{\ell}{a^\nu}  ) = \varphi (\sigma^*(\ell))=  \psi (\ell).\]
If  $\ell \in L^\nu \cap T$, then $\psi (\ell)=0$, and since $\psi(a^{\nu})=\tau^*(\pi(a^\nu))=\varphi(\sigma^*(a^\nu))$ is not a zero-divisor by \cite[Fact 2.10]{Ma24d}, it follows that $\varphi(\frac{\ell}{a^{\nu}})=0$. Thus, $ \sum _{\nu \in (\bbZ_{\geq 0})^I} \frac{L^\nu \cap T }{a^\nu}\subset \ker (\varphi)$. Conversely, if $ \varphi (\frac{\ell}{a^\nu}  )=0$,  then $\psi (\ell)=\tau^*(\pi(\ell))=0$, and since $\tau^*$ is injective, it follows that $\pi(\ell)$=0, whence that  $\ell\in L^{\nu}\cap T$. 
\xpf 
 
The next proposition provides sufficient conditions for the inclusion \eqref{eq:Kerf_Ji-inclusion} to be an equality. 

\prop \label{prop:Kernel-formula} With the notation above, assume that the following hold:
\begin{enumerate}
    \item For every $j\in J$, the image $\pi_i(d_j)\in A/M_i$ of $d_j$ is not a zero divisor. 
    \item If $J_{\geq i}\neq \emptyset$ then the following two conditions are satisfied:
    \begin{enumerate}
    \item \label{assut3item2-1} For every $(\alpha_j) \in (\bbZ_{\geq 0})^{J_{\geq i}}\setminus\{0\}$, the following equality of ideals holds in $A$ 
     $$  \textstyle \left(\prod_{j\in J_{\geq i}} M_j^{\alpha_j}\right)\cap M_i=\left(\prod_{j\in J_{>j_0}} M_j^{\alpha_j}\right)M_{j_0}^{\alpha_{j_0}-1} M_i,$$   
  where $j_0 = \min \{ j \in J_{\geq i},\,  \alpha_{j} \neq 0 \}.$     
     \item  \label{assut3item1-1}  For every collection of pairs $((\alpha_{j'}^{(e)}),(\beta_j^{(e)}))\in (\mathbb{Z}_{\geq 0})^{J_{>i}}\times (\mathbb{Z}_{\geq 0})^{J}$ indexed by the elements $e$ of a set $E$,  the following equality of ideals holds in $A$: 
     {\small 
     $$ \textstyle \left(\sum_{e\in E} \left(\prod_{j'\in J_{\geq i}}  M_{j'}^{\alpha_{j'}^{(e)}}\right)\left(\prod_{j\in J} d_j^{\beta_j^{(e)}}\right)\right)\cap M_i=\sum_{e\in E}  \left(\left(\prod_{j'\in J_{\geq i}}  M_{j'}^{\alpha_{j'}^{(e)}}\right)\cap M_i\right)\left(\prod_{j\in J} d_j^{\beta_j^{(e)}}\right).$$   
     }
    \end{enumerate}
    \end{enumerate}
Then the inclusion \eqref{eq:Kerf_Ji-inclusion} in Lemma \ref{lem:KerfJi-inclusion} is an equality, that is,
$$\ker(F_J(i)^*)= \left(\frac{M_i}{\prod_{s\in J_{\geq i}}d_s}+\sum_{j\in J} \frac{M_j\cap M_i}{\prod_{s\in J_{\geq j}} d_s}\right).$$
\xprop

\pf
Applying Lemma \ref{lem:Kerf_Ji-Cartier} using Assumption (i), we are reduced to check that for every multi-index $\nu=(\nu_j)\in (\mathbb{Z}_{\geq 0})^{J}$, the inclusion 
\begin{equation}\label{eq:Goal-inclusion-prop}
    \frac{\left(\prod_{j \in J}\left(M_j + (\prod_{s \in J_{\geq j}} d_s)\right)^{\nu _j}\right)\cap M_i}{\prod _{j \in J}\left(\prod_{s \in J_{\geq j}} d_s\right) ^{\nu _j}} \subset \left(\frac{M_i}{\prod_{s\in J_{\geq i}}d_s}+\sum_{j\in J} \frac{M_j\cap M_i}{\prod_{s\in J_{\geq j}} d_s}\right).
\end{equation}
holds in $R_J$. 
Since $i\in I\setminus J$, we have $J=J_{\geq i}\cup J_{\leq i}=J_{>i}\cup J_{<i}$ and we can write 
\begin{equation*} 
\begin{split}
\textstyle   \prod_{j \in J}\left(M_j + (\prod_{s \in J_{\geq j}} d_s)\right)^{\nu _j} 
  & =\textstyle \left( \prod_{j \in J_{<i}}\left(M_j + (\prod_{s \in J_{\geq j}} d_s)\right)^{\nu _j}\right)  \textstyle  \left( \prod_{j \in J_{>i}}\left(M_j + (\prod_{s \in J_{\geq j}} d_s)\right)^{\nu _j}
    \right)\\
    & =  \left(S_{J_{<i}}+(Q_{J_{<i}})\right)\left(S_{J_{>i}}+(Q_{J_{>i}}) \right) 
    \\ 
    &= S_{J_{<i}}\left(S_{J_{>i}}+(Q_{J_{>i}})\right)+\left((S_{J_{>i}}(Q_{J_{<i}})+(Q_{J_{<i}})(Q_{J_{>i}})\right)
\end{split}    
\end{equation*}
where, for $\Gamma=J_{<i}$ and $J_{>i}$, we have put
$$S_{\Gamma}=\sum_{\underset {(\alpha_\gamma)\neq (0)}{(\alpha_\gamma)+(\beta_\gamma)=(\nu_\gamma)}}\textstyle \prod_{\gamma\in \Gamma}\left(M_\gamma^{\alpha_\gamma}(\prod_{s \in J_{\geq \gamma}} d_s)^{\beta_\gamma}\right) \textrm{ and } 
Q_\Gamma=\textstyle \prod_{j\in \Gamma} (\prod_{s \in J_{\geq j}} d_s)^{\nu_j},
$$
with the convention that $S_\Gamma=0$ and $Q_\Gamma=1$ if $\Gamma=\emptyset$. Putting $Q_J=Q_{J_{<i}}Q_{J_{>i}}$, it follows from Assumption (i) that in $A$, we have $(Q_J)\cap M_i=(Q_J)M_i$, which implies in turn that 
\begin{equation} \label{eq:Case-Q}
  \frac{(Q_J)\cap M_i}{\prod _{j \in J}\left(\prod_{s \in J_{\geq j}} d_s\right)^{\nu _j}}=\frac{(Q_J)M_i}{\prod _{j \in J}\left(\prod_{s \in J_{\geq j}} d_s\right)^{\nu _j}}\subset \left(\frac{M_i}{1}\right)\subset \left(\frac{M_i}{\prod_{s\in J_{\geq i}} d_s}\right).    
\end{equation}
Since $M_j\subset M_i$ for all $j\in J_{<i}$, it follows that $S_{J_{<i}}\subset M_i$ and hence that 
{\small 
\begin{equation}
\textstyle   \left(\prod_{j \in J}\left(M_j + (\prod_{s \in J_{\geq j}} d_s)\right)^{\nu _j} \right)\cap M_i=
\begin{cases} 
   S_{J_{<i}} + (Q_J)M_i & \textrm{ if } J_{>i}=\emptyset \\
  \left(S_{J_{>i}}(Q_{J_{<i}})+(Q_J)\right)\cap M_i & \textrm{ if } J_{<i}=\emptyset \\
   S_{J_{<i}}\left(S_{J_{>i}}+(Q_{J_{>i}})\right)+\left(S_{J_{>i}}(Q_{J_{<i}})+(Q_J)\right)\cap M_i & \textrm{otherwise.}
   \end{cases}
\end{equation}
}

\textbullet If $J_{<i}\neq \emptyset$ then for any term $\prod_{j\in J_{<i}} \left(M_j^{\alpha_j}(\prod_{s\in J_{\geq j}}d_s)^{\beta_j}\right)$ of the sum $S_{J_{<i}}$ and any term $\prod_{j\in J_{>i}} \left(M_j^{\alpha_j}(\prod_{s\in J_{\geq j}}d_s)^{\beta_j}\right)$ of the sum $(S_{J_{>i}}+(Q_{J_{>i}}))$, we have {\small 
\begin{equation} \label{eq:J<i-main-term}
    \begin{split}
        \frac{\left(\prod_{j\in J_{<i}} \left(M_j^{\alpha_j}(\prod_{s\in J_{\geq j}}d_s)^{\beta_j}\right)\right)\left(\prod_{j\in J_{>i}} \left(M_j^{\alpha_j}(\prod_{s\in J_{\geq j}}d_s)^{\beta_j}\right)\right)}{\prod _{j \in J}\left(\prod_{s \in J_{\geq j}} d_s\right) ^{\nu _j}}
        & \subset \prod_{j\in J_{<i}}\left(\frac{ M_j}{\prod_{s \in J_{\geq j}} d_s}\right)^{\alpha_j}\prod_{j\in J_{>i}}\left(\frac{ M_j}{\prod_{s \in J_{\geq j}} d_s}\right)^{\alpha_j} \\
          & \subset \left(\frac{M_{j_0}}{\prod_{s \in J_{\geq j_0}} d_s}\right)=\left(\frac{M_{j_0}\cap M_i}{\prod_{s \in J_{\geq j_0}} d_s}\right),
    \end{split} 
\end{equation}
}
where $j_0=\min\{j\in J_{<i}, \alpha_j\neq 0\}$.

\textbullet If $J_{>i}\neq \emptyset$ then, by Assumption (ii) (b), we have 
{\small 
\begin{equation}\label{eq:J>i-split}
\begin{split}    
\left(S_{J_{>i}}(Q_{J_{<i}})+(Q_J)\right)\cap M_i & = \sum_{\underset {(\alpha_j)\neq (0)}{(\alpha_j)+(\beta_j)
=(\nu_j)}}\textstyle \left(\left(\prod_{j\in J_{>i}} M_j^{\alpha_j}\right)\cap M_i\right)\left(Q_{J_{<i}}
\prod_{j\in J_{>i}}(\prod_{s \in J_{\geq j}} d_s)^{\beta_j}\right) +(Q_J)M_i. 
\end{split}
\end{equation}
}
Moreover, by Assumption (ii) (a), we have  $(\prod_{j\in J_{>i}}M_j^{\alpha_j})\cap M_i=(\prod_{j\in J_{>j_0}} M_j^{\alpha_j})M_{j_0}^{\alpha_{j_0}-1}M_i$, 
where $j_0=\min\{j\in J_{>i}, \alpha_j\neq 0\}$. Since $M_i=M_{j_0}\cap M_i$, it follows that we have 
\begin{equation}\label{eq:J>i-main-term}
\begin{split}
    \frac{\left(\left(\prod_{j\in J_{>i}} M_j^{\alpha_j}\right)\cap M_i\right)\left(Q_{J_{<i}}
\prod_{j\in J_{>i}}(\prod_{s \in J_{\geq j}} d_s)^{\beta_j}\right)}{\prod _{j \in J}\left(\prod_{s \in J_{\geq j}} d_s\right)^{\nu _j}} & = \frac{(\prod_{j\in J_{>j_0}} M_j^{\alpha_j})M_{j_0}^{\alpha_{j_0}-1}M_i}{\left(\prod _{j \in J_{>j_0}}\left(\prod_{s \in J_{\geq j}} d_s\right)^{\alpha_j}\right)\left(\prod_{s \in J_{\geq j_0}} d_s\right)^{\alpha_{j_0}}} \\
& \subset 
\left(\frac{M_{j_0}\cap M_i}{\prod_{s\in J_{\geq j_0}}d_s} \right).  
\end{split}
\end{equation}
The desired inclusion \eqref{eq:Goal-inclusion-prop} now follows by combining \eqref{eq:Case-Q} and \eqref{eq:J<i-main-term} in the case where $J_{>i}=\emptyset$,  \eqref{eq:Case-Q}, \eqref{eq:J>i-split} and \eqref{eq:J>i-main-term} in the case where $J_{<i}=\emptyset$ and all these equations in the general case. 
\xpf

\section{Panelizations of multi-centered deformation spaces} \label{secSliding} 

We proceed with the notation and assumptions of Section \ref{secD}. We fix a scheme $X$ and a deformation datum  $\left({}^{D_I}_{X_I}\right)$ on $X$ for some finite totally ordered set $(I,\leq)$ which  satisfies the following: 

\begin{assum} \label{assumslid} For all $(i,i') \in I^2$, the scheme $X_i \cap D_{i'}=X_i\times_X D_{i'}$ is a Cartier divisor in $X_i$.
\end{assum}

\subsection{The panelization morphism}
Recall from subsection \ref{subsec:panels} that given a subset $J\subset I$, we have for every $i\in I\setminus J$ an induced multi-centered deformation space $\sigma_{J,i}:(\bbD_{J})_i=\bbD \left( \left({}^{D_J}_{X_J}\right)\!|_{_{X_i}} {~}_{X_i}\right)\to X_i$. The collection of these multi-centered deformation space $(\bbD_J)_i$, $i\in I$, determines a multi-centered deformation space 
$$\tau_{J}: \bbD\bbD_J=
\bbD \left( \left({}^{\sigma_J^{-1}D_{I\setminus J}}_{(\bbD_J)_{I\setminus J}}\right) {~}_{\bbD_J}\right) \to \bbD_J=\bbD  \left(\left({}^{D_J}_{X_J}\right) {~}_X\right)$$
of the multi-centered deformation space $\sigma_J:\bbD_J\to X$ which we called, in Definition \ref{def:panel-datum}, the $J$-th panel of the multi-centered deformation space $\sigma_I:\bbD_I\to X$. This  terminology is motivated by the following result, which asserts the existence of a canonical morphism $\Theta_J: \bbD\bbD_J \to \bbD_I$, which we call the \emph{panelization morphism} of $\bbD_I$ associated to the subset $J\subset I$. 
\prop 
 \label{theopanelization-mor} 
 With the assumption and notation above, for every subset $J\subset I$ there exists a unique $X$-morphism 
 \begin{equation} \label{paneisoeq}\Theta_J: \bbD\bbD_J\to \bbD_I 
  \end{equation}
 Moreover, the following diagram is commutative   \[ 
 \xymatrix@C=10ex{ & \bbD_I \ar[d]^{\upsilon_{I,J}} \ar[dr]^{\sigma_I} \\ \bbD\bbD_J \ar[ur]^{\Theta_J} \ar[r]^{\tau_J}  & \bbD_J \ar[r]^{\sigma_J}  & X.
 }
 \]
and $\Theta_J$ restricts to an isomorphism of schemes over $X\setminus(\textstyle\sum_{i\in I} D_i)$
$$(\sigma_J\circ \tau_J)^{-1}(X\setminus (\textstyle \sum_{i\in I} D_i)) \stackrel{\Theta_J}{\longrightarrow} \sigma_I^{-1}(X\setminus(\textstyle\sum_{i\in I} D_i)).$$  
\xprop

\pf 
It is straightforward to check that the scheme $\sigma_J\circ \tau_J:\bbD\bbD_J\to X$ belongs to the category $\Sch _{X}^{\{D_i\}_{i\in I}  \text{-reg}}$.  To prove the existence and uniqueness of $\Theta_J$, it suffices, by Proposition \ref{Propunivdef}, to verify that the inclusion 
$\textstyle \left(\sigma_J \circ \tau_J \right) ^{-1} \left( \sum _{s \in I_{\geq i} } D_s \right) \subset  \left(\sigma_J \circ \tau_J \right) ^{-1} \left(  X_i \right)$ 
holds in $Clo(\bbD\bbD_J)$ for every $i\in I$. The verification of these inclusions being of local nature with respect to the Zariski topology on $X$, we can reduce without loss of generality to the local setting and notation of subsection \ref{subsec:affine}. Putting $\rho_J^*=\tau_J^*\circ\sigma_J^*$, the inclusion to be verified is then equivalent 
to the property that in the coordinate ring $B_J$ of $\bbD\bbD_J$, we have 
 \begin{equation}\label{eq:inclusion_ideals-3.5(ii)}
     \rho_J^*(M_i) \subset ( \textstyle \rho_J^*(\prod _{s\in I_{\geq i}}d_s))
 \quad \textrm{for all } i\in I.
 \end{equation}
Assumption \ref{assumslid} implies that for all $s\in I$, the image $\pi_i(d_s)\in A/M_i$ of $d_s$ by the quotient homomorphism $\pi_i:A\to A/M_i$ is not a zero-divisor, so that  by Lemma \ref{lem:Kerf_Ji-Cartier} we have 
\begin{equation}\label{YHJ85} \ker (F_J(i)^*) = \sum _{\nu=(\nu_j) \in (\mathbb{Z}_{\geq 0})^J} \frac{\left(\prod_{j \in J}\left(M_j + (\prod_{s \in J_{\geq j}} d_s)\right)^{\nu _j}\right)\cap M_i}{\prod _{j \in J}\left(\prod_{s \in J_{\geq j}} d_s\right) ^{\nu _j}}.  
 \end{equation}
The universal properties of $\sigma_J:\bbD_J\to X$ and $\tau_J:\bbD\bbD_J\to \bbD_J$ translate into the inclusions 
\begin{equation} \label{univprop-ddj}
 \begin{cases}
\textstyle \sigma_J^*(M_j)  \subset \left(\sigma_J^*(\prod_{s\in J_{\geq j}} d_s)\right) &   \textrm{ for all } j\in J \\
 \textstyle \tau_J^*(\ker \left(F_J(i)^* \right)) \subset \left( \tau_J^*(\sigma_J^*(\prod _{s\in (I\setminus J)_{\geq i}} d_s ))\right)=\left( \rho_J^*(\prod _{s\in (I\setminus J)_{\geq i}} d_s )\right) & \textrm{for all } i\in I\setminus J
 \end{cases}
\end{equation}
in $R_J$ and $B_J$ respectively. 

\textbullet First assume that $i\in I\setminus J$. If $J_{\geq i}=\emptyset$ then taking $\nu=(0)$ in \eqref{YHJ85} gives that $\sigma_J^*(M_i)\subset \ker \left(F_J(i)^* \right)$ and hence that 
\begin{equation}
\label{eq:Inclus-I-1}
\rho_J^*(M_i) \subset\tau_J^*(\ker \left(F_J(i)^* \right))\subset \textstyle \left(\rho_J^* (\prod _{s\in (I\setminus J)_{\geq i}} d_s )\right)=\left(\rho_J^*(\prod _{s\in I_{\geq i}} d_s) \right).
\end{equation}
Otherwise, if $J_{\geq i}\neq \emptyset$, then for $j_0=\min\{J_{>i}\}$, we have $J_{\geq j_0}=J_{\geq i}$ and  $M_i=M_{j_0}\cap M_i$. Taking $\nu=(0,\ldots,\underset{j_0}{1},\ldots 0)$ in \eqref{YHJ85} gives that $\sigma_J^*(M_i)\subset  
\left(\textstyle \sigma_J^*(\prod_{s\in J_{\geq i}} d_s)\right) \ker \left(F_J(i)^* \right)$ 
and hence that
\begin{equation}
\label{eq:inclus-I-2}
\begin{array}{lcl}
\rho_J^*(M_i)
& \subset &  
\left(\textstyle \rho_J^*(\prod_{s\in J_{\geq i}} d_s)\right) (\tau_J^*(\ker \left(F_J(i)^* )\right)) \\ & \subset &  \textstyle \left(\rho_J^*(\prod_{s\in J_{\geq i}} d_s)\right)  \left( \rho_J^*(\prod _{s\in (I\setminus J)_{\geq i}} d_s) \right) = \textstyle \left( \rho_J^*(\prod _{s\in I_{\geq i}} d_s) \right).
\end{array}
\end{equation}

\textbullet Now assume that $j\in J$. If $(I\setminus J)_{\geq j}=\emptyset$  
then $\left( \prod _{s\in J _{\geq j}} d_s \right)=\left( \prod _{s\in I _{\geq j}} d_s \right)$, and hence 
\begin{equation}
\label{eq:inclus-J-1}
\rho_J^*(M_j)\subset \textstyle \left(\rho_J^* \prod _{s\in J _{\geq j}} d_s) \right)= \left( \rho_J^*(\prod _{s\in I _{\geq j}} d_s) \right).
\end{equation}
 Otherwise, if $(I\setminus J)_{\geq j}\neq\emptyset$,  
then for  $i_0=\min\{(I\setminus J)_{\geq j}\}$, we have  $(I\setminus J)_{\geq i_0}=(I\setminus J)_{\geq j}$ and $M_j=M_j\cap M_{i_0}$. Taking  $\nu=(0,\ldots,\underset{j}{1},\ldots 0)$
in (\ref{YHJ85}) for $i=i_0$ then gives that  $\sigma_J^*(M_j)\subset \left(\sigma_J^*(\prod_{s\in J_{\geq j} }d_s\right)\ker(F_J(i_0)^*)$ and hence that
\begin{equation}
    \label{eq:inclus-J-2}
\begin{array}{lcl}
\rho_J^*(M_j) & \subset & \textstyle \left(\textstyle \rho_J^*(\prod_{s\in J_{\geq j}} d_s)\right) \left(\tau_J^*(\ker (F_J(i_0)^*))\right) \\ 
& \subset & \textstyle \left( \rho_J^*(\prod_{s\in J_{\geq j}} d_s)\right)
\left( \rho_J^*(\prod_{s\in (I\setminus J)_{\geq j}} d_s)\right)=\left(\rho_J^* (\prod_{s\in I_{\geq j}} d_s)\right).
\end{array}
\end{equation}
Equations \eqref{eq:Inclus-I-1}, \eqref{eq:inclus-I-2}, \eqref{eq:inclus-J-1} and \eqref{eq:inclus-J-2} imply that  that the inclusion  \eqref{eq:inclusion_ideals-3.5(ii)} holds for all $i\in I$, which  completes the proof of the existence and uniqueness of the morphism $\Theta_J:\bbD\bbD_J\to \bbD_I$. 

Finally, the equality $ \upsilon_{I,J}\circ \Theta_J=\tau_J$ follows from Corollary \ref{coronumberofmor} and the fact that $\Theta_J$ restricts to an isomorphism of $X\setminus(\sum_{i\in I} D_i)$-schemes follows from Corollary \ref{cor:isocomplement}.
\xpf

\rema \label{fact:restricted-subset} \label{reformulcoro} Under Assumption \ref{assumslid}, the unique morphism of $X_i$-schemes 
\begin{equation*} \label{eq:iso-restricted}
 (\bbD_J)_i=\bbD\left( \left({}^{D_J}_{X_J}\right)\!|_{_{X_i}} {~}_{X_i}\right) \isomto  \bbD \left( \left({}^{D_{J_{>i}}}_{X_{J_{>i}}}\right)\!|_{_{X_i}} {~}_{X_i}\right)=(\bbD_{J_>i})_i.
\end{equation*}
given by Proposition \ref{Propunivdef} is an isomorphism.  This implies in turn the existence  a unique isomorphism of $\bbD_J$-schemes
\begin{equation*}
   \bbD\bbD_J=\bbD \left( \left( {}^{\sigma_J^{-1}D_{I\setminus J}}_{(\bbD_J)_{I\setminus J}} \right) {~}_{\bbD_J}\right) \to 
   \bbD \left( \left( {}^{\sigma_J^{-1}D_{I\setminus J}}_{(\bbD_{J>i})_{I\setminus J}}\right) {~}_{\bbD_J}\right) 
\end{equation*} 
through which  the morphism $\Theta_J$ of Proposition \ref{theopanelization-mor} can be re-written in the compact full form
 \begin{equation*} \Theta_J: 
\bbD \left( \left( {}^{\sigma_J^{-1}D_i}_{ \bbD \left( \left( ^{D_j}_{X_j} \right)_{j \in J_{>i }}  {}_{X_i} \right)}  \right)_{i \in I\setminus J}   {~}_{\bbD  \left(\left({}^{D_j}_{X_j}\right)_{j\in J} {~}_X\right)} \right) \to \bbD \left( \left({}^{D_i}_{X_i} \right)_{i \in I} {~~}_{X} \right).
   \end{equation*}
\xrema

 \exam \label{rem:not-iso} 
  Let $I=\{1\leq 2\}$,  $X= \Spec \left(\bbZ [t_1,t_2,x] \right)$, 
 $X_2= V (x)\subset X_1 = V (x^2)$, $D_1= V(t_1)$ and $D_2= V (t_2)$. We then have 
 \begin{align*}
    \bbD_{\{1,2\}}= \bbD \left( ^{D_2,D_1}_{X_2,X_1 ~X} \right)= & \Spec( \bbZ [t_1,t_2, t_2^{-1}x, (t_1 t_2)^{-1}x^2]).
 \end{align*}
 
 We have $\bbD_{\{1\}}=\bbD \left( ^{D_1}_{X_1 ~X}  \right) = \Spec( \bbZ [t_1,t_2, x,  t_1^{-1}x^2])$ 
 and 
 $\bbD\left( \left({}^{D_1}_{X_1}\right)\!|_{_{X_2}} {~}_{X_2}\right)=\Spec( \bbZ [t_1,t_2])$. The morphism $F_{\{1\}}(2) : \bbD\left( \left({}^{D_1}_{X_1}\right)\!|_{_{X_2}} {~}_{X_2}\right) \to \bbD_{\{1\}}$ is the immersion as the closed subscheme with ideal $(t_1^{-1}x^2,x)$. It follows that 
 \begin{align*}
     \bbD\bbD_{\{1\}}= \bbD\left( {}^{\sigma_1^{-1}D_2}_{{\bbD\left( \left({}^{D_1}_{X_1}\right)\!|_{_{X_2}} {~}_{X_2}\right)}}  {~}_{\bbD_{\{1\}}} \right) = & \Spec (\bbZ [t_1,t_2, t_2^{-1}(t_1^{-1}x^2), t_2^{-1} x ])
 \end{align*}
and that the morphism $\tau_{ \{1\}}:\bbD\bbD_{\{1\}}\to  \bbD_{\{1,2\}}$ is an isomorphism. 
 
On the other hand, we have $\bbD_{\{2\}}=\bbD \left( ^{D_2}_{X_2 ~X}  \right) = \Spec( \bbZ [t_1,t_2, t_2^{-1}x])$ 
and $\bbD\left( \left({}^{D_2}_{X_2}\right)\!|_{_{X_1}} {~}_{X_1}\right)= \Spec (\bbZ [t_1,t_2,t_2^{-1}x]/((t_2^{-1}x)^2)$. 
The morphism
 $F_{\{2\}}(1) : \bbD\left( \left({}^{D_2}_{X_2}\right)\!|_{_{X_1}} {~}_{X_1}\right) \to \bbD_{\{2\}}$
 is the immersion as the closed subscheme with ideal $((t_2^{-1}x)^2)$. It follows that 
\begin{align*}
     \bbD\bbD_{\{2\}}= \bbD \left( ^{\sigma_2^{-1}D_1}_{ \bbD \left(^{D_2}_{X_2 ~ X_1} \right) ~ \bbD_{\{2\}} }\right)  = & \Spec (\bbZ [t_1,t_2, t_2^{-1} x, t_1^{-1}(t_2^{-1}x)^2 ])
\end{align*} 
 and that $\tau_{ \{2\}}:\bbD\bbD_{\{2\}}\to  \bbD_{\{1,2\}}$ 
 is the dilatation of $\bbD_{\{1,2\}}$ with center  $[V((t_1t_2)^{-1} x^2), V(t_2)]$, which is not an isomorphism. 
\xexam

 \subsection{Panelization isomorphisms}
We continue with the notation and assumptions of the previous subsections. Example \ref{rem:not-iso} illustrates  that the panelization morphisms $\Theta_J:\bbD\bbD_J\to \bbD_I$ of Proposition \ref{theopanelization-mor} need not be isomorphisms in general. Our aim in this subsection is to provide sufficient conditions on a subset of indices $J\subset I$ under which $\Theta_J:\bbD\bbD_J\to \bbD_I$ will be an isomorphism. 

As it will appear more clearly below, such conditions can be interpreted as certain regularity properties on the collections of closed subschemes and locally principal divisors in a deformation datum on the scheme $X$. 
We proceed in two steps: first, in subsection \ref{sec:local-panelization} we identify   conditions under which for a given subset $J\subset I$ the morphism $\Theta_J:\bbD\bbD_J\to \bbD_{I}$ of Proposition \ref{theopanelization-mor} is an isomorphism. Then, in subsection \ref{secset}, we provide examples of global geometric hypotheses on a deformation datum on a scheme $X$ which guarantee that all the panelization morphisms $\Theta_J:\bbD\bbD_J\to \bbD_{I}$ are isomorphisms, independently of the subset $J$.

\subsubsection{Panelization with respect to a subset of indices} \label{sec:local-panelization}
To formulate the main result of this subsection, we introduce the following specific technical condition on a deformation datum on a scheme $X$, which is a globalization of the two properties (a) and (b) in Proposition  \ref{prop:Kernel-formula} (ii).

\defi \label{assutheopan3} Let $\left({}^{D_I}_{X_I}\right)$ be a deformation datum on a scheme $X$ satisfying Assumption \ref{assumslid} and let $\mathcal{M}_i\subset \mathcal{O}_X$ and $\mathcal{I}_i\subset \mathcal{O}_X$ be the ideal sheaves of the subschemes $X_i$ and $D_i$ of $X$, $i\in I$. We say that  $\left({}^{D_I}_{X_I}\right)$ is \emph{dilatation-regular} with respect to a subset $J\subset I$ if for all $i \in I\setminus J$ such that $J_{\geq i} \ne \emptyset$, the following hold:
\begin{itemize}
    \item[$\quad (R_1)$] \label{assut3item2} For every $\alpha=(\alpha_j) \in (\bbZ_{\geq 0})^{J_{>i}}\setminus\{0\}$ , the following equality of ideal sheaves holds in $\mathcal{O}_X$: 
     $$  \textstyle \left(\prod_{j\in J_{>i}} \mathcal{M}_j^{\alpha_j}\right)\cap \mathcal{M}_i=\left(\prod_{j\in J_{>j_0}} \mathcal{M}_j^{\alpha_j}\right)\mathcal{M}_{j_0}^{\alpha_{j_0}-1} \mathcal{M}_i,$$   
  where $j_0 = \min \{ j \in J_{>i} \textrm{ such that } \alpha_{j} \neq 0 \}.$     
     \item[$(R_2)$]  \label{assut3item1}  For every collection of pairs $(\alpha^{(e)},\beta^{(e)})=((\alpha_{j'}^{(e)}),(\beta_j^{(e)}))\in (\mathbb{Z}_{\geq 0})^{J_{>i}}\times (\mathbb{Z}_{\geq 0})^{J}$ indexed by the elements $e$ of a set $E$,  the following equality of ideal sheaves holds in $\mathcal{O}_X$: 
     {\small 
     $$ \textstyle \left(\sum_{e\in E} \left(\prod_{j'\in J_{>i}}  \mathcal{M}_{j'}^{\alpha_{j'}^{(e)}}\right)\left(\prod_{j\in J} \mathcal{I}_j^{\beta_j^{(e)}}\right)\right)\cap \mathcal{M}_i=\sum_{e\in E}  \left(\left(\prod_{j'\in J_{>i}}  \mathcal{M}_{j'}^{\alpha_{j'}^{(e)}}\right)\cap \mathcal{M}_i\right)\left(\prod_{j\in J} \mathcal{I}_j^{\beta_j^{(e)}}\right).$$   
     }
\end{itemize}
 \xdefi

Any deformation datum $\left({}^{D_I}_{X_I}\right)$ satisfying Assumption \ref{assumslid} is automatically dilatation regular with respect to every subset $J\subset I$ such that $(I\setminus J)>J$ (i.e. $i>j$ in $I$ for all $(i,j) \in (I\setminus J) \times J$).  Example   \ref{rem:not-iso} provides on the other hand an illustration of a deformation datum which is not dilatation regular with respect to all subsets of the index set $I$, namely: 

 \exam Consider the deformation datum  
 $X_2= V (x)\subset X_1 = V (x^2)$, $D_1= V(t_1)$ and $D_2= V (t_2)$ on $X= \Spec \left(\bbZ [t_1,t_2,x] \right)$ of Example \ref{rem:not-iso}. By the remark above, this deformation datum is automatically dilatation regular with respect to the subset $J=\{1\}$ of $I=\{1,2\}$. On the other hand, since the ideal $M_2^2\cap M_1=M_1=M_2^2$ properly contains the ideal $M_2M_1=M_2^3$, condition (i) in Definition \ref{assutheopan3} fails for $J=\{2\}$, $I\setminus J=\{1\}$ and $\alpha=(2)$, implying that this deformation datum is not dilatation-regular with respect to the subset $J=\{2\}$ of $I=\{1,2\}$.  
 \xexam
 
 \smallskip

 \theo \label{theo-iso-panelization} 
 Let $\left({}^{D_I}_{X_I}\right)$ be a deformation datum on a scheme $X$ satisfying Assumption \ref{assumslid} and  dilatation-regular with respect to a subset $J\subset I$. Then the morphism $\Theta_J:\bbD\bbD_J\to \bbD_I$ of Proposition \ref{theopanelization-mor} is an isomorphism.
\xtheo

\pf 
We will show that under our hypotheses, the morphism $\upsilon_{I,J}:\bbD_I\to \bbD_J$ factors  through a $\bbD_J$-morphism $\Upsilon_I:\bbD_I\to \bbD\bbD_J$ which is an inverse for $\Theta_J:\bbD\bbD_J\to \bbD_I$. 
The scheme $\upsilon_{I,J}:\bbD_I\to \bbD_J$ belongs to the category $\Sch_{\bbD_J}^{\sigma_J^{-1}D_{I\setminus J} \text{-reg}}$ by Lemma  \ref{lem:nu_IJ}. In view of the definition of the $\bbD_J$-scheme $$\tau_J:\bbD\bbD_J=\bbD \left( \left({}^{\sigma_J^{-1}D_{i}}_{(\bbD_J)_{i}}\right)_{i\in I\setminus J} {~}_{\bbD_J}\right) \to \bbD_J, $$ 
Proposition \ref{Propunivdef} will provide the existence of a unique $\bbD_J$-morphism $\Upsilon_I:\bbD_I\to \bbD\bbD_J$ factoring $\upsilon_{I,J}$ provided that we show that the following inclusion 
\begin{equation}    
\label{eq:univ-condition-upsilon-IJ}
 \textstyle \sigma_I^{-1} \left( \sum _{s \in (I\setminus J)_{\geq i}}  D_s \right)=\upsilon_{I,J}^{-1}\left( \sum _{s \in (I\setminus J)_{\geq i}}  \sigma_J^{-1}(D_s) \right)\subset  \upsilon_{I,J} ^{-1} 
 \left( (\bbD_J)_i\right),
 \end{equation}
holds in $Clo(\bbD_I)$ for every $i\in I\setminus J$. The verification of these inclusions being of local nature with respect to the Zariski topology on $X$, we can proceed as in the proof of Proposition \ref{theopanelization-mor} by reducing to the local setting and notation of subsection \ref{subsec:affine}. In this setting, the assertion to verify becomes that for every $i\in I\setminus J$, the image by the $A$-algebra homomorphism $\upsilon_{I,J}^*:R_J\to R_I$ of the kernel of the the homomorphism $F_J(i)^*:R_J\to B_J$ is contained in the ideal $\left(\sigma_I^*(\prod_{s\in (I\setminus J)_{\geq i}}d_s)\right)$ of $R_I$. Assumption \ref{assumslid} together with properties $(R_1)$ and $(R_2)$ in Definition \ref{assutheopan3} imply that the conditions 
of Proposition \ref{prop:Kernel-formula} are satisfied, so that we have, for every $i\in I \setminus J$, 
\begin{equation}
\ker(F_J(i)^*)= \left(\frac{M_i}{\prod_{s\in J_{\geq i}}d_s}+\sum_{j\in J} \frac{M_j\cap M_i}{\prod_{s\in J_{\geq j}} d_s}\right).
\end{equation}
By Proposition \ref{Propunivdef}, we have $\sigma_I^*(M_i)\subset \left(\sigma_I^*(\prod_{s\in I_{\geq i}} d_s)\right)$ for every $i\in I$. This immediately implies that 
\begin{equation} \label{eq:term-1}
\upsilon_{I,J}^*\left(\frac{M_i}{\prod_{s\in J_{\geq i}}d_s}\right)\subset \textstyle \left(\sigma_I^*(\prod_{s\in (I\setminus J)_{\geq i}} d_s)\right)   
\end{equation}
Now consider a term of the form $M_j\cap M_i$. 

If $j\geq i$, then $M_j \cap M_i=M_i$ and $J_{\geq j}\subset J_{\geq i}$ so that we have 
\begin{equation}\label{eq:term-2-case1}
\upsilon_{I,J}^*\left(\frac{M_j\cap M_i}{\prod_{s\in J_{\geq j}}d_s}\right)=\upsilon_{I,J}^*\left(\frac{M_i}{\prod_{s\in J_{\geq j}}d_s}\right)\subset \textstyle \left(\sigma_I^*(\prod_{s\in (I\setminus J)_{\geq i} \cup J_{\geq i}\setminus J_{\geq j}} d_s) \right)\subset \left(\sigma_I^*(\prod_{s\in (I\setminus J)_{\geq i}} d_s)\right).
\end{equation}

Otherwise, it $j\leq i$, then $M_j\cap M_i=M_j$ and $(I\setminus J)_{\geq i} \subset (I\setminus J)_{\geq j}$ so that we have 
\begin{equation} \label{eq:term-2-case2}
\upsilon_{I,J}^*\left(\frac{M_j\cap M_i}{\prod_{s\in J_{\geq j}}d_s}\right)=\upsilon_{I,J}^*\left(\frac{M_j}{\prod_{s\in J_{\geq j}}d_s}\right)\subset \textstyle \left(\sigma_I^*(\prod_{s\in (I\setminus J)_{\geq j}} d_s) \right)\subset \left(\sigma_I^*(\prod_{s\in (I\setminus J)_{\geq i}} d_s)\right).
\end{equation}
Equations \eqref{eq:term-1}, \eqref{eq:term-2-case1} and \eqref{eq:term-2-case2} imply that the inclusion $\upsilon_{I,J}^*(\ker(F_J(i)^*))\subset \left(\sigma_I^*(\prod_{s\in (I\setminus J)_{\geq i}}d_s)\right)$ hold in $R_I$ for every $i\in I\setminus J$ and hence, the existence of a unique $\bbD_J$-morphism $\Upsilon_I:\bbD_I\to \bbD\bbD_J$ factoring $\upsilon_{I,J}$. To complete the proof, it remains to observe that  $\Theta_J\circ\Upsilon_I$ and $\Upsilon_I\circ\Theta_J $ are then $X$-endomorphisms of $\bbD_I$ and $\bbD\bbD_J$ respectively, which, by  Corollary \ref{coronumberofmor}, must be equal to the identity of $\bbD_I$ and $\bbD\bbD_J$ respectively.
\xpf
 
\coro\label{KsupS} Let $\left({}^{D_I}_{X_I}\right)$ be a deformation datum on a scheme $X$ satisfying Assumption \ref{assumslid} and let $J\subset I$ be a subset such that $(I\setminus J)> J$. Then the panelization morphism  
$\Theta_J:\bbD\bbD_J \to \bbD_I$ of Proposition \ref{theopanelization-mor} is an isomorphism which can be re-written in the form  
\begin{equation}
\label{eq:panel-reformulation}
 \Theta_J:\bbD \left( \left( {}^{\sigma_J^{-1}D_i}_{X_i}  \right)_{i \in I\setminus J} ~ {~}_{\bbD  \left(\left({}^{D_j}_{X_j}\right)_{j\in J} {~}_X\right)} \right)\isomto  \bbD \left( \left({}^{D_i}_{X_i} \right)_{i \in I} {~~}_{X} \right).
\end{equation}
\xcoro
\pf
Since $(I\setminus J)>J$ we have $J_{\geq i}=\emptyset$ for all $i\in I\setminus J$.  The deformation datum $\left({}^{D_I}_{X_I}\right)$ is thus dilatation regular with respect to $J$, and the conclusion that $\Theta_J$ is an isomorphism then follows from Theorem \ref{theo-iso-panelization}.
The reformulation \eqref{eq:panel-reformulation} follows from Remark \ref{reformulcoro}.
\xpf 

\exam \label{ex:iterated} Let $I=(\{1,\ldots, n\},\leq)$ and let 
$$\bbD \left( \left({}^{D_i}_{X_i} \right)_{i \in I} {~~}_{X} \right)=\bbD \big (^{D_n, \ldots, D_1} _{X_n,  \ldots, X_1  ~ X } \big)$$
(see \eqref{eq:ordered-notation} for the notation) be the multi-centered deformation space associated to a deformation datum $\left({}^{D_I}_{X_I}\right)$ on $X$ which satisfies Assumption \ref{assumslid}. Consider the exhaustion of $I$ by the subsets $I_s=\{1,\ldots, s\}$, $s=1,\ldots,n$. For every $s\geq 2$, applying Corollary \ref{KsupS} to the pair of subsets $I=I_s$ and $J=I_{s-1}$ gives an isomorphism of $X$-schemes
 $$\Theta_{I_{s-1}}: \bbD\bbD_{I_{s-1}}= \bbD \big (^{D_s} _{X_s, ~  \bbD_{I_{s-1}}} \big) \to \bbD_{I_s}$$
where we wrote $D_s$ instead of $\sigma_{I_{s-1}}^{-1} D_s$ to lighten the notation. Combining all these isomorphisms together, we obtain a unique isomorphism of $X$-schemes
\begin{equation}
\Theta :    \mathbb{D}\left(\begin{array}{cc}
D_{n}\\
X_{n}, & \mathbb{D}\left(\begin{array}{cc}
D_{n-1}\\
X_{n-1}, & \mathbb{D}\left(\begin{array}{cc}
D_{n-2}\\
X_{n-2}, & \mathbb{D}\cdots
\end{array}\right)
\end{array}\right)
\end{array}\right)
\to \bbD \big (^{D_n, \ldots, D_1} _{X_n,  \ldots, X_1  ~ X } \big)
\end{equation}
which expresses $\bbD \big (^{D_n, \ldots, D_1} _{X_n,  \ldots, X_1  ~ X } \big)$ as an iterated mono-centered deformation space.
\xexam
 
 \subsubsection{A setting for unconditional panelization} \label{secset}
 The next theorem provides a  natural algebro-geometric setting in which the dilatation-regularity conditions in Definition \ref{assutheopan3} is satisfied for all subsets $J$ of $I$, so that the panelization morphisms $\Theta_J:\bbD\bbD_J \to \bbD_I$ of Proposition  \ref{theopanelization-mor} are guaranteed to be all isomorphisms, independently on the choice of the subset $J$. 
 
 \theo \label{coroset} 
Let $I $ be the totally ordered set $\{ 1 , \ldots , n\}$ and let $\left({}^{D_I}_{X_I}\right)$ be a deformation datum on a scheme $X$ such that the following holds: 
     $X$ is locally Noetherian and locally on $X$, there exists a regular sequence $ x_{m_n} , \ldots x_{m_{n-1}} , \ldots , x_{m_1} ,  \ldots , x_0, d_n , \ldots , d_1$ (for some integers $m_{k}$) such that $X_k = V ( x_{m_k } , \ldots , x_0) $ and $D _k = V (d_k)$ for all $n \geq k \geq 1$. 
  
  Then Assumption \ref{assumslid} is satisfied and the panelization morphisms $\Theta_J:\bbD\bbD_J\to \bbD_I$ of  Proposition  \ref{theopanelization-mor} are isomorphisms for all subsets $J\subset I$.
 \xtheo 
 
 \pf The fact that  Assumption \ref{assumslid} is satisfied is immediate. In view of Theorem \ref{theo-iso-panelization}, it is enough to prove that the conditions of Definition \ref{assutheopan3} are satisfied for all $J\subset I$. After reducing to the case where $X$ is the spectrum of a Noetherian local ring using \cite[\href{https://stacks.math.columbia.edu/tag/063I}{Tag 063I}]{stacks-project} and \cite[\href{https://stacks.math.columbia.edu/tag/02L4}{Tag 02L4}]{stacks-project},  this follows Proposition \ref{prop:intersection_inclusions} and Corollary \ref{coroap} in Appendix B. 
 \xpf 
  
 \exam \label{ex:vector-bundles} Let $C$ be an effective SNC Cartier divisor with irreducible component $C_i$, $i=1,\ldots , n$ on a locally Noetherian scheme $S$ and let $E_n\to E_{n-1}\to \cdots E_1\to E_0$ be a sequence of closed immersions of vector bundles $\mathrm{p}_i:E_i=\mathbb{V}(\mathcal{E}_i)\to S$ on $S$, determined by a collections of surjections $\mathcal{E}_0\to \mathcal{E}_1\cdots \to \mathcal{E}_{n-1}\to \mathcal{E}_n$ of coherent locally free sheaves on $S$. Put $D_i=p_0^{-1}{C_i}$.   Then $\left({}^{D_I}_{E_I}\right)$ is a deformation datum on $E_0$ which satisfies the assumptions of  Theorem \ref{coroset}.  
 \xexam
 
 \exam \label{ex:Def-space} Let $Z_n\to Z_{n-1}\to \cdots \to Z_1\to Z_0$ be a sequence of regular closed immersions between locally Noetherian schemes over a base scheme $B$. Let $\mathbb{A}^n_B=\mathrm{Spec}_B(\mathcal{O}_{B}[t_1,\ldots, t_n])$, $X_\ell=Z_\ell\times _B \mathbb{A}^n_B$, $\ell=0,1\ldots, n$, and for every $i=1,\ldots,n$, let $D_i=X_0\times_B V(t_i)$. Then $\left({}^{D_I}_{X_I}\right)$ is a deformation datum on $X_0$ which satisfies the assumptions of  Theorem \ref{coroset}. 
\xexam
 
 \subsection{The polyptych of a multi-centered deformation space} 
Let $\left({}^{D_I}_{X_I}\right)$ be a deformation datum on a scheme $X$ index by a totally ordered set $(I,\leq)$ of cardinal $n$, which satisfies Assumption \ref{assumslid}. Proposition  \ref{theopanelization-mor} and Remark \ref{reformulcoro} provide for every subset $J$ of $I$ a unique $X$-morphism $\Theta_J:\bbD\bbD_J\to \bbD_I$. By applying these two results iteratively to the entries in the expression of $\bbD\bbD_J$ for all subsets of $J$ and $I\setminus J$ one gets a collection of multi-centered deformation spaces related to each other by a unique collection of $X$-morphisms induced by succesive panelization morphisms. Moreover, each of these morphisms is an isomorphism provided that the chosen subsets satisfy the conditions of Theorem \ref{theo-iso-panelization}. In particular, when this holds for all successive choices of subsets, the polyptych provides a collection of canonically isomorphic expressions of $\bbD_I$.
These collections take the form of directed graph uniquely determined the combinatorics of suitable partitions of the index set $(I,\leq)$. This graph is in particular independent on $X$ and the actual deformation datum $\left({}^{D_I}_{X_I}\right)$. We call this directed graph the \emph{polyptych} of the multi-centered deformation space of length $n$ and denote it by $\mathcal{P}(n)$. 

\exam \label{ex:double-space1}\label{ex:3-space-polyptych}For $n=0,1$ the polyptych $\mathcal{P}(n)$ consists of a unique panel  $\mathscr{P}(0) = \{X\}$ and $\mathscr{P}(1)= \bbD_{\{1\}}=\left\{ \bbD \big( ^{D_1}_{X_1,X } \big)\right\}$.
The polyptych $\mathscr{P}(2)$ is a triptych represented in Figure \ref{fig:figp1}. 

The polyptych $\mathscr{P} (3)$ has $19$ panels, represented in Figure \ref{fig:figp3}. From the panel $p_1$,  Proposition  \ref{theopanelization-mor} and Remark \ref{reformulcoro} yields $6$ new panels $p_2$-$p_7$ corresponding to the non-trivial proper subsets of $\{1,2,3\}$. Each panel $p_i $ with $2 \leq i \leq 6$ can be further panelized in two different ways where the panel $p_7$ gives rise to $4$ new panels which can each can be panelized in two different ways.  Panels on the left of Figure \ref{fig:figp3} can not be panelized further.These panels connect in several different ways to $p_1$ through panelization morphisms and the whole diagram is commutative.
\begin{figure}[!ht]
 \centering
  \scalebox{0.80}
  {
\begin{tikzcd}[ampersand replacement=\&, row sep=0.6ex, column sep=5ex]
   \bbD\bbD_{\{1\}}= \bbD \Big(^{D_2}_{X_2, \bbD \left( ^{D_1}_{X_1,X}\right)} \Big) \ar[drrrr, controls={+(+5,0) and +(-5,0)}] \\
   \&  \& \&  \& \bbD_{\{1,2\}}=\bbD \left( ^{D_2, D_1} _{X_2,X_1,X} \right) \\
   \bbD\bbD_{\{2\}}=\bbD \Big ( ^{D_1}_{\bbD \big(^{D_2}_{X_2 ,X_1} \big), \bbD \big(^{D_2}_{X_2 , X} \big)} \Big) \ar[urrrr, controls={+(+5,0) and +(-5,0)}]
\end{tikzcd}    
}
\caption{The polyptych $\mathscr{P}(2)$}
\label{fig:figp1}
\end{figure}
 \vspace{-0.6cm}
   \begin{figure}[!ht]
 \centering
  \scalebox{0.77}{
\rotatebox{90}{
\begin{tikzcd}[ampersand replacement=\&, row sep=1.6ex, column sep=12ex]
p_{12}=\bbD \Big(^{D_3}_{X_3 , \bbD \big(^{D_2}_{X_2 , \bbD \big(^{D_1 } _{X_1, X}\big)} \big)}\Big) \ar[r, controls={+(+5,0) and +(-5,0)}] \ar[dr, controls={+(+5,0) and +(-5,0)}] \&  p_2=\bbD \Big( {}^{D_3}_{X_3 , } {}^{D_2}_{X_2,}{}_{ \bbD \big( {}^{D_1}_{X_1, X}\big) } \Big) \ar[rdddd, controls={+(+5,0) and +(-5,0)}]  \\
  p_{13}=\bbD \Big(^{D_2}_{\bbD\big( ^{D_3}_{X_3,X_2}\big), \bbD \big(^{D_3}_{ X_3, \bbD \big(^{D_1}_{X_1 , X} \big)} \big) } \Big) \ar[ru, controls={+(+5,0) and +(-5,0)}] \ar[rd, controls={+(+5,0) and +(-5,0)}] \&  p_3=\bbD \Big( {}^{D_3}_{X_3,\bbD \big({}^{D_2}_{X_2,}{}^{D_1}_{X_1, X} \big)} \Big) \ar[rddd, controls={+(+5,0) and +(-5,0)}]\\
   p_{14}=\bbD \Big(^{D_3}_{X_3,\bbD\big(^{D_1}_{\bbD\big(^{D_2}_{X_2, X_1}\big), \bbD\big(^{D_2}_{X_2,X} \big) } \big) } \Big) \ar[ru, controls={+(+5,0) and +(-5,0)}] \ar[rd, controls={+(+5,0) and +(-5,0)}]\& p_4=\bbD \Big({}^{D_2}_{\bbD\big(^{D_3}_{X_3,X_2}\big), \bbD \big(^{D_3,D_1}_{X_3,X_1,X}\big)}\Big)\ar[rdd, controls={+(+5,0) and +(-5,0)}]\\
  p_{15}=\bbD \Big({}^{D_2}_{{\bbD \big({}^{D_3}_{X_3,X_2} \big) },{\bbD \Big({}^{D_1}_{{{\bbD \big({}^{D_3}_{X_3,X_1} \big) }},{{\bbD \big({}^{D_3}_{X_3,X} \big) }}} \Big)}} \Big) \ar[ru, controls={+(+5,0) and +(-5,0)}] \ar[rd, controls={+(+5,0) and +(-5,0)}]  \&  p_5=\bbD \Big({}^{D_3, D_1}_{X_3, \bbD\big(^{D_2}_{X_2,X_1}\big),  \bbD\big(^{D_2}_{X_2,X}\big)} \Big) \ar[rd, , controls={+(+5,0) and +(-5,0)}] \\
   p_{16}=\bbD \Big( {}^{D_1}_{\bbD \Big({}^{D_2}_{\bbD\big({}^{D_3}_{X_3,X_2} \big ), \bbD\big({}^{D_3}_{X_3,X_1} \big )} \Big) , \bbD \Big({}^{D_2}_{\bbD\big({}^{D_3}_{X_3,X_2} \big ), \bbD\big({}^{D_3}_{X_3,X} \big )} \Big) } \Big) \ar[r, controls={+(+5,0) and +(-5,0)}] \ar[rdd, controls={+(+5,0) and +(-5,0)}]\ar[rddd,  controls={+(+5,0) and +(-5,0)}] \& p_6= \bbD \Big( {}^{D_2}_{\bbD \big( ^{D_3}_{X_3,X_2}\big) , } {}^{D_1}_{\bbD \big( {}^{D _3}_{X_3, X_1}\big) , \bbD \big( {}^{D_3}_{X_3, X}\big) } \Big) \ar[r,, controls={+(+5,0) and +(-5,0)}]\& p_1=\bbD \left(^{D_3,D_2,D_1}_{X_3,X_2,X_1,X} \right) \\
  p_{17}=\bbD \Big(^{D_1}_{{\bbD \big( ^{D_3}_{X_3, \bbD\big(^{D_2}_{X_2, X_1}\big)}\big),{\bbD \big(^{D_2}_{\bbD\big(^{D_3}_{X_3,X_2}\big) , \bbD\big(^{D_3}_{X_3,X}\big)} \big) }}} \Big) \ar[rd, controls={+(+5,0) and +(-5,0)}] \ar[r, controls={+(+5,0) and +(-5,0)}]  \& p_8=\bbD \Big( {}^{D_1}_{\bbD \big( ^{D_3}_{X_3, \bbD\big(^{D_2}_{X_2, X_1}\big)}\big),\bbD \big({}^{D_3}_{X_3,}{}^{D_2}_{X_2, X} \big)} \Big) \ar[rd, controls={+(+5,0) and +(-5,0)}]\\
   p_{18}=\bbD \Big(  _{\bbD \big( _{X_3, \bbD\big(_{X_2, X_1}^{D_2}\big) }^{D_3} \big) , {\bbD \big( _{X_3, \bbD\big(_{X_2, X}^{D_2}\big) }^{D_3}\big)}}^{D_1}  \Big) \ar[ru, controls={+(+5,0) and +(-5,0)}] \ar[ruuu, controls={+(+5,0) and +(-5,0)}]\ar[rdd, controls={+(+5,0) and +(-5,0)}] \& p_9=\bbD \Big(^{D_1}_{\bbD\big(^{D_3,D_2}_{X_3 ,X_2,X_1}\big), \bbD \big(^{D_2}_{\bbD\big(_{X_3,X_2}^{D_3}\big),\bbD \big( ^{D_3}_{X_3,X}\big)}\big)}\Big) \ar[r, controls={+(+5,0) and +(-5,0)}]\&  p_{7}=\bbD \Big( {}^{D_1}_{\bbD \big({}^{D_3}_{X_3,}{}^{D_2}_{X_2, X_1} \big),\bbD \big({}^{D_3}_{X_3,}{}^{D_2}_{X_2, X} \big)} \Big) \ar[uu, controls={+(+0,+2) and +(-5,0)}]\\
    p_{19}=\bbD \Big(^{D_1}_{\bbD \big(^{D_2} _{\bbD \big(^{D_3}_{X_3,X_2}\big), \bbD \big(^{D_3}_{X_3,X_1}\big) } \big) , \bbD\big(^{D_3}_{X_3, \bbD \big(^{D_2}_{X_2 , X}\big)} \big)} \Big) \ar[rd, controls={+(+5,0) and +(-5,0)}] \ar[r, controls={+(+5,0) and +(-5,0)}] \& p_{10}=\bbD \Big( {}^{D_1}_{\bbD \big({}^{D_2}_{\bbD \big(^{D_3}_{X_3,X_2}\big),\bbD \big(^{D_3}_{X_3,X_1}\big) } \big),\bbD \big({}^{D_3}_{X_3,}{}^{D_2}_{X_2, X} \big)} \Big)\ar[ru, controls={+(+5,0) and +(-5,0)}] \\
     \& p_{11}=\bbD \Big( {}^{D_1}_{\bbD \big({}^{D_3}_{X_3,}{}^{D_2}_{X_2, X_1} \big), \bbD\big(^{D_3}_{X_3, \bbD \big(^{D_2}_{X_2 , X}\big)} \big)} \Big)\ar[ruu, controls={+(+5,0) and +(-5,0)}]\\
  \end{tikzcd} 
  }
  }
\caption{The polyptych $\mathscr{P}(3)$}
\label{fig:figp3}
\end{figure} 
\xexam  

\newpage
\section{Strata of multi-centered deformation spaces} \label{secstrata}

Given a deformation datum $\left({}^{D_I}_{X_I}\right)$ on a scheme $X$ indexed by a finite totally ordered set $(I,\leq)$, our goal in this section is to describe under suitable hypotheses the structure of the restrictions 
$\bbD_I\times_X \Sigma_S$ of the multi-centered deformation space $$\sigma_I:\bbD_I=\bbD \left( \left({}^{D_I}_{X_I}\right){~}_{X}\right)\to X$$ to the subschemes $\Sigma_S:=\bigcap_{s\in S} D_s$, where $S$ ranges through the subsets of $I$. We henceforth refer the $\Sigma_S$-scheme $\bbD_I\times_X \Sigma_S$ to as the \emph{$S$-stratum} of $\bbD_I$.

By analogy  with the case $\sharp I=1$ in which $\bbD_I\times_X \Sigma_I$ is the exceptional divisor of the dilatation morphism   $\sigma:\bbD \big (^{D_1} _{X_1 ~ X } \big)\to X$ over the subscheme $X_1\cap D_1$, we call the $\Sigma_I$-scheme 
\begin{equation}\label{eq:exc-stratum-def}
\overline{\sigma}_I:\bbV \left( \left({}^{D_I}_{X_I}\right){~}_{X}\right):= \bbD_I\times_X \Sigma_I\to \Sigma_I 
\end{equation} 
the \emph{exceptional stratum} of the multi-centered deformation space $\bbD_I$. By \cite[Proposition 3.16]{Ma24d}, the above morphism $\overline{\sigma}_I$ factors through the  closed subscheme $O:=(\bigcap_{i\in I}X_i) \cap \Sigma_I$ of $X$ and induces a canonical isomorphism  of $O$-schemes
\begin{equation} \label{eq:bbV_def}
\bbV \left( \left({}^{D_I}_{X_I}\right){~}_{X}\right)= \bbD_I\times_X \Sigma_I \cong \bbD_I\times_X O.
\end{equation}
In this section, we will apply the panelization isomorphisms established in the previous  section to provide descriptions of the strata $\bbD_I\times_X \Sigma_S$ of $\bbD_I$, where $S$ ranges through subsets of $I$. 

\subsection{Panels strata}\label{panel-strat} 

Recall Definition \ref{def:panel-datum} that for a subset $J$ of $I$, the $J$-th panel of $\bbD_I$ is the $\bbD_J$-scheme 
 $$\tau_{J}: \bbD\bbD_J=
\bbD \left( \left({}^{\sigma_J^{-1}D_{I\setminus J}}_{(\bbD_J)_{I\setminus J}}\right) {~}_{\bbD_J}\right)= \bbD \left( \left( {}^{\sigma_J^{-1}D_i}_{ \bbD \left( \left( ^{D_j}_{X_j} \right)_{j \in J_{>i }}  {}_{X_i} \right)}  \right)_{i \in I\setminus J}  {~}_{\bbD_J}\right) \to \bbD_J. $$

\lemm \label{lem:stras} 
Let $K\subset J$ be a subset such that the scheme $\bbD\bbD_J  \times _{X} \Sigma_K  \to  \Sigma_K$ belongs to the category $\Sch_{\Sigma_K}^{\{D_i\cap \Sigma_K\}_{i\in I \setminus J}\text{-reg}}$. Then there exists a unique isomorphism of  $\bbD_J \times_X \Sigma_K $-schemes 
\begin{equation} \label{hfraks}
\eta_{J,K} : \bbD\bbD_J \times_X \Sigma_K \isomto \bbD \left( \left({}^{\sigma_J^{-1}D_{I\setminus J}\times_X \Sigma_K}_{(\bbD_J)_{I\setminus J}\times_X \Sigma_K}\right) {~}_{\bbD_J\times_X \Sigma_K}\right).
   \end{equation}
\xlemm
\pf
The assumption implies that  $\bbD\bbD _J  \times _{\bbD_J} (\bbD_J\times_X \Sigma_K)\to \bbD_J\times_X \Sigma_K$ belongs to the category  $\Sch_{\bbD_J\times_X \Sigma_K}^{\{\sigma_J^{-1}D_i\times_X \Sigma_K\}_{i \in I \setminus J}\text{-reg}}$. The existence and uniqueness of the desired isomorphism then follow from Lemma \ref{lem:base-change} and Corollary \ref{coronumberofmor} respectively.
\xpf

The next two propositions build on different choices of panelizations to provide,  under appropriate regularity conditions, a description of the strata $\bbD_I\times_X \Sigma_S$ for proper subsets $S$ of $I$ in two complementary ways: either in the form  of exceptional strata of suitable restricted multiple deformation spaces or in the form of multiple deformations space of suitable exceptional strata.  
\prop
\label{th:vb-strata}
Let $\left({}^{D_I}_{X_I}\right)$ be a deformation datum on a  scheme $X$ index by a finite totally ordered set $(I,\leq)$  which satisfies Assumption \ref{assumslid}. Let $S\subset I$ be a subset such that $\left({}^{D_I}_{X_I}\right)$ is dilatation-regular with respect to $I\setminus S$  (see Definition \ref{assutheopan3}).
Then there exists a unique isomorphism of
$\bbD \left( \left({}^{D_j\cap \Sigma_S}_{X_j\cap \Sigma_S} \right)_{j \in (I\setminus S)_{>\max(S)}}  {}_{X_{\max(S)}}\cap \Sigma_S \right)$-schemes
\begin{equation}
\label{fraksVB}    
\hat{\mathfrak{S}}_{I\setminus S}: \bbD_I \times_X \Sigma_S 
    \isomto \bbV \left( \left( {}^{\sigma_{I\setminus S}^{-1}D_s}_{\bbD \left( \left({}^{D_j}_{X_j} \right)_{j \in (I\setminus S)_{>s}}  {}_{X_s} \right)}  \right)_{s \in S}   {~}_{\bbD_{I\setminus S}} \right)
\end{equation}
\xprop
\pf
Since $\left({}^{D_I}_{X_I}\right)$ is dilatation-regular with respect to $I\setminus S$, Theorem \ref{theo-iso-panelization} implies that the panelization morphism $$\Theta_{I\setminus S}:\bbD\bbD_{I\setminus S}=\bbD \left(\left( {}^{\sigma_{I\setminus S}^{-1}D_s}_{ \bbD \left( \left( ^{D_j}_{X_j} \right)_{j \in (I\setminus S)_{>s }}  {}_{X_s} \right)}  \right)_{s \in S}   {~}_{\bbD_{I\setminus S}} \right)\to \bbD_I$$ of Proposition \ref{theopanelization-mor} is an isomorphism of $\bbD_{I\setminus S}$-schemes. 
By definition of the exceptional stratum,
 the inverse $\Upsilon_I$ of $\Theta_{I\setminus S}$ then induces an isomorphism 
$$\hat{\mathfrak{S}}_{I\setminus S}:\bbD_I\times_X \Sigma_S\isomto \bbD\bbD_{I\setminus S}\times_X \Sigma_S=\bbV \left( \left({}^{\sigma_{I\setminus S}^{-1}D_s}_{\bbD \left( \left({}^{D_j}_{X_j} \right)_{j \in (I\setminus S)_{>s }}  {}_{X_s} \right)}  \right)_{s \in S}   {~}_{\bbD_{I\setminus S}} \right)$$ 
where the scheme on the right hand side is viewed as a scheme over 
 $$\bbD \left(\left({}^{D_j}_{X_j} \right)_{j \in (I\setminus S)_{>\max(S) }}  {}_{X_{\max(S)}} \right)\times_X \Sigma_S \cong \bbD \left(\left({}^{D_j\cap \Sigma_S}_{X_j\cap \Sigma_S} \right)_{j \in (I\setminus S)_{>\max(S)}}  {}_{X_{\max(S)}\cap \Sigma_S} \right).$$
\xpf

\prop  \label{higher_strata} \label{cor:panel-strata-1}  
Let $\left({}^{D_I}_{X_I}\right)$ be a deformation datum on a  scheme $X$ index by a finite totally ordered set $(I,\leq)$  which satisfies Assumption \ref{assumslid}. Let $S\subset I$ be a subset such that $\left({}^{D_I}_{X_I}\right)$ is dilatation-regular with respect to $S$  and such that the scheme $\bbD\bbD_S \times _{X} \Sigma_S  \to  \Sigma_S$ belongs to the category $\Sch_{\Sigma_S}^{\{D_i\cap \Sigma_S\}_{i\in I \setminus S}\text{-reg}}$. Then there exists a unique isomorphism of 
$\bbD_S\times_X \Sigma_S$-schemes
\begin{equation} \label{frakss}
\mathfrak{S}_S : \bbD_I\times_X \Sigma_S \isomto \bbD \left( \left( {}^{\sigma_S^{-1}D_i\times_X \Sigma_S}_{ \bbD \left( \left( ^{D_s}_{X_s} \right)_{s \in S_{>i }}
  {}_{X_i} \right) \times_X \Sigma_S }  \right)_{i \in I\setminus S} {}_{\bbD_S\times_X \Sigma_S } \right).
   \end{equation}
and a unique induced isomorphism 
of $\bbV \left( \left({}^{D_S}_{X_S}\right){~}_{X}\right)$-schemes 
\begin{equation} \label{frak2s}
    \hat{\mathfrak{S}}_S:\bbD_I \times_X \Sigma_S \stackrel{\sim}{\rightarrow}  \bbD \left( \left( {}^{{\bar{\sigma}}_S^{-1}D_i}_{\bbV \left(\left({} ^{D_j}_{X_j} \right)_{j \in S_{>i}} {}_{X_i} \right) \times_{X_i}  \bigcap_{s \in S_{\leq i}} D_s|_{X_i}}  \right)_{i \in I\setminus S}  {~}_{\bbV  \left(\left({}^{D_S}_{X_S}\right) {~}_{X}\right)} \right)
\end{equation}
\xprop 
\pf
Since $\left({}^{D_I}_{X_I}\right)$ is dilatation-regular, Theorem \ref{theo-iso-panelization} implies that the panelization morphism $\Theta_S:\bbD\bbD_S\to \bbD_I$ of Proposition \ref{theopanelization-mor} is an isomorphism of $\bbD_S$-schemes, with inverse $\Upsilon_I$. 
The assumption
that the scheme $\bbD\bbD_S \times _{X} \Sigma_S  \to  \Sigma_S$ belongs to the category $\Sch_{\Sigma_S}^{\{D_i\cap \Sigma_S\}_{i\in I \setminus S}\text{-reg}}$ implies that   $(\bbD\bbD _S  \times _{\bbD_S} \bbD_S)\times_X \Sigma_S\to \bbD_S\times_X \Sigma_S$ belongs to the category  $\Sch_{\bbD_S\times_X \Sigma_S}^{\{\sigma_J^{-1}D_i\times_X \Sigma_S\}_{i \in I \setminus S}\text{-reg}}$ which in turn implies, by Lemma  \ref{lem:stras}, the existence 
of unique isomorphism  
\begin{equation} \label{hfraks-1}
\eta_{S} : \bbD\bbD_S \times_X \Sigma_S \isomto \bbD \left( \left({}^{\sigma_J^{-1}D_{I\setminus S}\times_X \Sigma_S}_{(\bbD_S)_{I\setminus S}\times_X \Sigma_S}\right) {~}_{\bbD_S\times_X \Sigma_S}\right)
   \end{equation}
of $\bbD_S\times_X \Sigma_S$-schemes.  The first isomorphism $\mathfrak{S}_S$ is then obtained as the composition $\eta_S\circ(\Upsilon_I\times \mathrm{id})$, combined with the identification of Remark \ref{reformulcoro}. The second isomorphism follows from the first,  together with the isomorphisms 
\begin{equation*}
\begin{split}
   \bbD \left( \left( ^{D_s}_{X_s} \right)_{s \in S_{>i }}
  {}_{X_i} \right) \times_X \Sigma_S & \cong  \bbD \left( \left( ^{D_s}_{X_s} \right)_{s \in S_{>i }}
  {}_{X_i} \right)\times_X \bigcap_{s\in S_{>i}} D_s\times _X \bigcap_{s \in S_{\leq i}} D_s 
    \\ 
    & \cong  \bbV \left( \left( ^{D_j}_{X_j} \right)_{j \in S_{>i }}  {}_{X_i} \right)\times_{X_i} \bigcap_{s \in S_{\leq i}} D_s|_{X_i}. 
\end{split}
\end{equation*}
\xpf

\exam \label{cod1strata} Let $s\in I=(\{1,\ldots, n\},\leq )$ be such that hypotheses of Proposition \ref{th:vb-strata} and  Proposition \ref{higher_strata} are satisfied. Then there exist:
\begin{enumerate} 
    \item A unique isomorphism of $\bbD \Big( ^{D_n\cap D_s , \ldots, D_{s+1}\cap D_s}_{X_n\cap D_s , \ldots, X_{s+1}\cap D_s, X_s\cap D_s}\Big)$-schemes
    \begin{equation}
 \label{codim1-formula0}   
\hat{\mathfrak{S}}_{I\setminus \{s\}}: \bbD \Big({}^{D_n , \ldots, D_1}_{X_n , \ldots, X_1, X}\Big) \times_{X} D_s 
    \isomto  \bbV \left({}^{\sigma_{I\setminus \{s\}}^{-1}D_s}_{\bbD \Big({}^{D_n, \ldots, D_{s+1}}_{X_n, \ldots, X_{s+1}, X_s}\Big)}    {~}_{\bbD \Big({}^{D_n, \ldots, D_{s+1}, D_{s-1}, \ldots, D_1}_{X_n, \ldots , X_{s+1} ,X_{s-1}, \ldots, X_1, X}  \Big)}  \right).
\end{equation}
    \item A unique isomorphism of $\bbV \left( ^{D_s}_{X_s, X} \right)$-schemes
\begin{equation} \label{codim1-formula1}
\hat{\mathfrak{S}}_s:\bbD \Big( {}^{D_n , \ldots, D_1}_{X_n , \ldots, X_1, X}\Big) \times _{X} D_s \isomto \bbD \Big({}^{D_n\cap D_s, \ldots, D_{s+1}\cap D_s ~D_{s-1}\cap D_s,~~~ \ldots, D_1\cap D_s}_{X_n \cap D_s , \ldots , X_{s+1} \cap D_s ,~ \bbV \big(^{D_s}_{X_s, X_{s-1}} \big), \ldots, \bbV \big(^{D_s}_{X_s, X_{1}} \big), \bbV \big(^{D_s}_{X_s, X} \big)} \Big).
\end{equation}
\end{enumerate}
\xexam

\section{Deformation space of a chain of closed immersions between smooth schemes}\label{sec:Verdier-Rost}
In this section, we describe the specialization of some of the results obtained in the previous sections in the situation of the deformation  space ``à la Verdier-Rost'' of a collection  of closed immersions  between schemes smooth over a locally Noetherian base scheme. 

\begin{assum} \label{VR-Def}
In what follows, we consider a collection of closed immersions $$((Z_i)_{i\in I},Z_0): \qquad Z_n\hookrightarrow Z_{n-1}\hookrightarrow \cdots \hookrightarrow Z_1 \hookrightarrow Z_0,$$ 
$I=(\{1,\ldots, n\},\leq)$,
of schemes smooth over a locally Noetherian base scheme $B$. We denote by $\mathcal{I}_{Z_1}\subset \cdots \mathcal{I}_{Z_n}\subset \mathcal{O}_{Z_0}$ the ideal sheaves of the corresponding closed subschemes of $Z_0$.  

We put $\mathbb{A}^n_B=\mathbb{A}^n_{B,t_I}=B\times_{\mathbb{Z}}\mathrm{Spec}(\mathbb{Z}[t_1,\ldots,t_n])$, $X_0=X_{I,0}=Z_0\times_B \mathbb{A}^n_{B}$ and for every $i\in\{1,\ldots,n\}$, we put $X_i=X_{I,i}=Z_i\times_B \mathbb{A}^n_{B}=\mathbb{A}^n_{Z_i}$  and  $D_i=D_{I,i}=Z_0\times_B V(t_i)\subset Z_0\times_B \mathbb{A}^n_{B}$. 

For every subset $S\subset I$ of cardinal $r$, we denote by $H_S=H_{I,S}\cong \mathbb{A}^{n-r}_{B,t_{I\setminus S}}$ the closed subscheme $V((t_s)_{s\in S})$ of $\mathbb{A}^n_B$ and we put $\Sigma_S=\Sigma_{I,S}= Z_0\times_B H_S$.
\end{assum}

The collection  $\left({}^{D_i}_{X_i}\right)_{i\in I}$
is a deformation datum
on $X_0$ in the sense of Definition  \ref{defdatum}, which satisfies Assumption \ref{assumslid}.  Moreover,  since closed immersions between smooth schemes over a locally Noetherian base scheme are regular \cite[\href{https://stacks.math.columbia.edu/tag/067U}{Tag 067U}]{stacks-project}, this deformation datum actually satisfies all the regularity hypotheses in Theorem \ref{coroset}.

\defi With the notation and assumption above, the \emph  {deformation space} of the sequence of closed immersions $((Z_i)_{i\in I},Z_0): \quad Z_n\hookrightarrow Z_{n-1}\hookrightarrow \cdots \hookrightarrow Z_1 \hookrightarrow Z_0$
is the $X_0$-scheme

$$\sigma:\mathrm{Def}((Z_i)_{i\in I},Z_0):=\bbD \big (^{D_n, \ldots, D_1} _{X_n,  \ldots, X_1  ~ X_0 } \big)= \Bl \left\{ _{X_i}^{\sum _{j =i}^n D_j }\right\}_{n \geq i \geq 1} X_0 \longrightarrow X_0.$$
\xdefi

By definition of the multi-centered deformation space (see  Definition \ref{def:mc-def}),  $\mathrm{Def}((Z_i)_{i\in I},Z_0)$ is isomorphic as a scheme over $X_0$ to the relative spectrum of the quasi-coherent $\mathcal{O}_{X_0}$-subalgebra 
\begin{equation} \label{eq:multiple-def-algebra}
\mathcal{R}=\mathcal{O}_{X_0}\left[\frac{(\mathcal{I}_{Z_n})}{t_n},\ldots, \frac{(\mathcal{I}_{Z_k})}{t_k\cdots t_n},\ldots, \frac{(\mathcal{I}_{Z_1})} {t_1\cdots t_n}\right]    
\end{equation}
of
$\mathcal{O}_{Z_0}[t_1^{\pm 1},\ldots, t_n^{\pm n}]$, see Subsection \ref{subsecdil}. Noting that $\mathcal{R}$ is a sub-$\mathcal{O}_{Z_0}$-algebra of $\mathcal{O}_{Z_0}[t_1^{\pm 1},\ldots, t_n^{\pm n}]$ generated by homogeneous elements with respect to the natural $\mathbb{Z}^n$-grading on the variables $t=(t_1,\ldots, t_n)$, it follows that $\mathcal{R}$ is a $\mathbb{Z}^n$-graded sub-$\mathcal{O}_{Z_0}[t_1,\ldots,t_n]$-algebra $$\mathcal{R}=\sum_{\nu=(\nu_1,\ldots, \nu_n)} \mathcal{R}_{\nu}t^{\nu} \subset \mathcal{O}_{Z_0}[t_1^{\pm 1},\ldots, t_n^{\pm n}],$$
where for every $\nu=(\nu_1,\ldots, \nu_n)\in \mathbb{Z}^n$,  $\mathcal{R}_\nu t^{\nu}=\mathcal{R}\cap \mathcal{O}_{Z_0}t^{\nu}$.     

\exam \label{ex:Def-Space-1}For $I=\{1\}$, the scheme $$\mathrm{Def}(Z_1,Z_0)=\mathrm{Bl}_{Z_1\times_B \mathbb{A}^1_{B,t_1}}^{Z_0\times_B V(t_1)}Z_0\times_B \mathbb{A}^1_{B,t_1}=\mathrm{Spec}_{Z_0\times_{B} \mathbb{A}^1_{B,t_1}} \big( \bigoplus_{i_1\in \mathbb{Z}} \mathcal{I}_{Z_1}^{i_1}t_1^{-i_1} \big)\to Z_0\times_B \mathbb{A}^1_{B,t_1},$$
where, by convention, $\mathcal{I}_{Z_1}^{i_1}=\mathcal{O}_{Z_0}$ for every $i_1\leq 0$, is the deformation to the normal cone \cite[$\S$2]{Ver76} \cite[(10.4)]{Ro96}  of the closed immersion $Z_1\subset Z_0$. 
\xexam

\rema 
In \cite[(10.5)]{Ro96}, for a pair of regular closed immersions $Z\to Y\to X=\mathrm{Spec}(A)$ with respective defining ideals $I\subset J\subset A$, the double deformation is defined as follows: $\bar{D}=\bar{D}(X,Y,Z)$ 
 is the spectrum of the subring 
\[
\mathcal O_{\bar D}
=
\sum_{n,m}
I^n J^{\,m-n}\cdot t^{-n} s^{-m}
\;\subset\;
A[t,s,t^{-1},s^{-1}],
\]
with the convention  \cite[Remark (10.2)]{Ro96}
that \(I^n=A\) for \(n\le0\) and \(J^k=A\) for \(k\le0\). 
 The above subring is in general not an $A[s,t]$-algebra (for instance, $t\not \in \mathcal{O}_{\bar{D}}$ unless $1\in J$) but it appears nevertheless to be treated in \cite{Ro96} as if it were.  With the re-interpretation that $\mathcal{O}_{\bar{D}}$ is the sub-$A[s,t]$-algebra of $A[s^{\pm 1}, t^{\pm 1}]$ generated by the terms in the above sum 
 rather the sum itself, the definition enjoys the expected behavior and corresponds to the description  given in \cite[Proposition 9.1]{Ma24d}, see also Example \ref{ex:double-space} below. 
\xrema

We view $\mathrm{Def}((Z_i)_{i\in I},Z_0)$ as a scheme over $\mathbb{A}^n_{B}=\mathbb{A}^n_{B,t_I}$  via the composition of the dilatation morphism $\sigma:\mathrm{Def}((Z_i)_{i\in I},Z_0)\to X_0=Z_0\times_B \mathbb{A}^n_B$ with the projection $\mathrm{pr}_2:X_0\to \mathbb{A}^n_B$. 

\prop \label{smoothsmoo} With the assumption and notation above, 
$\mathrm{Def}((Z_i)_{i\in I},Z_0)$ is a smooth $\mathbb{A}^n_B$-scheme. Moreover, for every subset $S\subset I$,  the $H_S$-scheme $\mathrm{Def}((Z_i)_{i\in I},Z_0)\times_{\mathbb{A}^n_B} H_S \to H_S$
belongs to the category $\Sch_{H_S}^{\{V(t_i) \cap H_S\}_{i \in I \setminus S}\text{-reg}}$.
\xprop 
\pf 
Since $X_0$ is a smooth $B$-scheme, $\mathrm{pr}_2:X_0=Z_0\times_B \mathbb{A}^n_B$ is a smooth morphism. Similarly, since $Z_1$ is a smooth $B$-scheme, $\mathrm{pr}_2:X_1\cap D_1=Z_1\times_B V(t_1)\to V(t_1)$ is a smooth morphism. Since by definition $\bbD\left( {}^{D_1}_{X_1}{~}_{X_0}\right)=\mathrm{Bl}_{X_1}^{D_1} X_0=\mathrm{Bl}_{X_1\cap D_1}^{D_1} X_0$, it follows from \cite[Proposition 2.16 (4)]{MRR20} that the composition $\mathrm{pr_2}\circ \sigma_1:\bbD\left( ({}^{D_1}_{X_1}{~}_{X_0}\right)\to X_0=Z_0\times_B \mathbb{A}^n_B\to \mathbb{A}^n_B$ is a smooth morphism. The smoothness of 
$\mathrm{pr}_2\circ \sigma:\mathrm{Def}((Z_i)_{i\in I},Z_0)=\bbD \big (^{D_n, \ldots, D_1} _{X_n,  \ldots, X_1  ~ X_0 } \big) \to \mathbb{A}^n_B$ then follows from the isomorphism of $X_0$-schemes 
$$
\Theta :    \mathbb{D}\left(\begin{array}{cc}
D_{n}\\
X_{n}, & \mathbb{D}\left(\begin{array}{cc}
D_{n-1}\\
X_{n-1}, & \mathbb{D}\left(\begin{array}{cc}
D_{n-2}\\
X_{n-2}, & \mathbb{D}\cdots
\end{array}\right)
\end{array}\right)
\end{array}\right)
\to \bbD \big (^{D_n, \ldots, D_1} _{X_n,  \ldots, X_1  ~ X_0 } \big).
$$
of Example \ref{ex:iterated}, and repeated use of \cite[Proposition 2.16 (4)]{MRR20} as above. 

 For the second assertion, observe that since $\mathrm{pr}_2\circ \sigma:\mathrm{Def}((Z_i)_{i\in I},Z_0)\to \mathbb{A}^n_B$ is smooth, the induced morphism $\mathrm{Def}((Z_i)_{i\in I},Z_0)\times_{A^n_B} H_S\to H_S$ is smooth  whence in particular flat. Since for every $i\in I\setminus S$,  $V(t_i)\cap H_S$ is a Cartier divisor on $H_S$, it follows  by \cite[\href{https://stacks.math.columbia.edu/tag/02OO}{Tag 02OO}]{stacks-project} that 
 $\mathrm{Def}((Z_i)_{i\in I},Z_0)\times_{A^n_B} H_S\to H_S$ 
 belongs to the category $\Sch_{H_S}^{\{V(t_i)\cap H_S\}_{i \in I \setminus S}\text{-reg}}$.
\xpf

\subsection{Panelization of deformation spaces}

For every subset $J=\{j_1,\ldots, j_m\}$ of $I$ endowed with the induced ordering, we let $$((Z_j)_{j\in J},Z_0): \; Z_{j_m}\hookrightarrow Z_{j_{m-1}}\hookrightarrow \cdots \hookrightarrow Z_{j_1}\hookrightarrow Z_0$$ be the corresponding sequence of closed immersions
and for every $i\in I\setminus J$
we let 
 $$((Z_J|_{Z_i})_{j\in J}, Z_i): \quad Z_{j_m}\times_{Z_0} Z_i\hookrightarrow Z_{j_{m-1}}\times_{Z_0} Z_i\hookrightarrow \cdots \hookrightarrow Z_{j_1}\times_{Z_0} Z_i\hookrightarrow Z_0\times_{Z_0} Z_i=Z_i$$
be the induced sequence of closed immersions of subschemes of $Z_i$. By definition, the restricted deformation space $\mathrm{Def}((Z_j)_{j\in J},Z_0)$ is then a scheme over $Z_0\times_B \mathbb{A}^{m}_{B,t_J}$, which, by Proposition \ref{smoothsmoo}, is smooth over $\mathbb{A}^{m}_{B,t_J}$. Each deformation space 
 $$\mathrm{Def}((Z_J|_{Z_i})_{j\in J}, Z_i)\cong \mathrm{Def}((Z_J|_{Z_i})_{j\in J_{>i}}, Z_i)\times_B \mathbb{A}^r_{B,t_{J_{\leq i}}}, \quad  i\in I\setminus J,$$ where $r=\mathrm{card}\{J_{\leq i}\}$, is a scheme over $Z_i\times_B \mathbb{A}^m_{B,t_J}$ which, by Proposition \ref{smoothsmoo} again is smooth over $\mathbb{A}^m_{B,t_J}$. By Proposition \ref{factfks}, 
 each deformation space 
 $\mathrm{Def}((Z_J|_{Z_i})_{j\in J}, Z_i)$ is a closed subscheme of $\mathrm{Def}((Z_J)_{j\in J}, Z_0)$ and we have an induced sequence of nested closed immersions 
 and we have an induced sequence of nested closed immersions 
\begin{equation}\label{eq:def-immersions}
\mathrm{Def}((Z_J|_{Z_i})_{j\in J}, Z_i)))\hookrightarrow \mathrm{Def}((Z_J|_{Z_i})_{j\in J}, Z_{i'}))) \quad \forall i\geq i'\in I\setminus J
\end{equation}
of schemes smooth over $\mathbb{A}^{m}_{B,t_J}$ for which the following diagram of is commutative
$$
\xymatrix{
\mathrm{Def}((Z_J|_{Z_i})_{j\in J}, Z_i) \ar[d] \ar[r] &  \mathrm{Def}((Z_J|_{Z_i})_{j\in J}, Z_{i'})  \ar[r] \ar[d] & \mathrm{Def}((Z_J)_{j\in J}, Z_0) \ar[d]
\\ Z_i\times_B \mathbb{A}^m_{B,t_J} \ar[r] & Z_{i'}\times_B \mathbb{A}^m_{B,t_J} \ar[r] & Z_0\times_B \mathbb{A}^m_{B,t_J}.
}
$$

\medskip 

\lemm \label{lem:def-space-translate} There exists a canonical isomorphism of $\bbD  \left(\left({}^{D_j}_{X_j}\right)_{j\in J} {~}_{X_0}\right)$-schemes 
\begin{equation}
\label{eq:def-panel}    
\bbD \left( \left( {}^{\sigma_J^{-1}D_i}_{ \bbD \left( \left( ^{D_j}_{X_j} \right)_{j \in J}  {}_{X_i} \right)}  \right)_{i \in I\setminus J}   {~}_{\bbD  \left(\left({}^{D_j}_{X_j}\right)_{j\in J} {~}_{X_0}\right)} \right)  \stackrel{\cong} {\to} \mathrm{Def}\left(
\mathrm{Def}((Z_j|_{Z_i})_{j\in J}, Z_i)_{i\in I\setminus J}, \mathrm{Def}((Z_j)_{j\in J}, Z_0)\right).
\end{equation}
\xlemm
\pf By definition, we have
$$\bbD  \left(\left({}^{D_j}_{X_j}\right)_{j\in J} {~}_{X_0}\right)=\bbD  \left(\left({}^{D_{I,j}}_{X_{I,j}}\right)_{j\in J} {~}_{X_{I,0}}\right)=\bbD \left(\left({}^{D_{J,j}\times_B \mathbb{A}^{n-m}_{B,t_{I\setminus J}}  }_{ X_{J,j}\times_B \mathbb{A}^{n-m}_{B,t_{I\setminus J}} }\right)_{j\in J} {~}_{X_{J,0}\times_B \mathbb{A}^{n-m}_{B,t_{I\setminus J}}}\right).$$
Since $\mathbb{A}^{n-m}_{B,t_{I\setminus J}}\to B$ is a flat morphism and multi-centered dilatations commute with flat base change \cite[Corollary 2.42, §3.7]{Ma24d}, it follows that 
$$
\bbD  \left(\left({}^{D_j}_{X_j}\right)_{j\in J} {~}_{X_0}\right) \cong 
\bbD  \left(\left({}^{D_{J,j}}_{X_{J,j}}\right)_{j\in J} {~}_{X_{J,0}}\right)\times_B \mathbb{A}^{n-m}_{B,t_{I\setminus J}}=\mathrm{Def}((Z_j)_{j\in J},Z_0)\times_{B} \mathbb{A}^{n-m}_{B,t_{I\setminus J}}$$
as schemes over $\mathbb{A}^n_{B,t_I}=\mathbb{A}^n_{B,t_J}\times_B \mathbb{A}^n_{B,t_{I\setminus J}}$. Under the isomorphism above, for every $i\in I\setminus J$, $\sigma_J^{-1}{D_i}$ is the divisor $\mathrm{Def}((Z_j)_{j\in J},Z_0)\times_{B} V(t_i)$ of $\mathrm{Def}((Z_j)_{j\in J},Z_0)\times_{B} \mathbb{A}^{n-m}_{B,t_{I\setminus J}}$. 
Furthermore, the same base flat base change argument implies that for every $i\in I\setminus J$, $$\bbD  \left(\left({}^{D_j}_{X_j}\right)_{j\in J} {~}_{X_i}\right)\cong \mathrm{Def}((Z_j|_{Z_i})_{j\in J},Z_i)\times_{B} \mathbb{A}^{n-m}_{B,t_{I\setminus J}},$$ viewed as a closed subscheme of $\mathrm{Def}((Z_j)_{j\in J},Z_0)\times_{B} \mathbb{A}^{n-m}_{B,t_{I\setminus J}}$ by the change of the closed immersions \eqref{eq:def-immersions}. Altogether, this identifies the left hand term in equation \eqref{eq:def-panel} with the scheme $\mathrm{Def}\left(
\mathrm{Def}((Z_J|_{Z_i})_{j\in J}, Z_i)_{i\in I\setminus J}, \mathrm{Def}((Z_j)_{j\in J}, Z_0)\right)$ and completes the proof.
\xpf

\theo \label{th:DefSpace-panels}With the notation above, for every subset $J\subset I$, there exists a unique isomorphism of schemes over $(Z_0\times_B \mathbb{A}^{m}_{B,t_J})\times_B \mathbb{A}^{n-m}_{B,t_{I\setminus J}} $
$$\Theta_J: \mathrm{Def}\left(
\mathrm{Def}((Z_J|_{Z_i})_{j\in J}, Z_i)_{i\in I\setminus J}, \mathrm{Def}((Z_j)_{j\in J}, Z_0)\right) \longrightarrow \mathrm{Def}((Z_i)_{i\in I},Z_0).$$
\xtheo
\pf 
As observed above, the deformation datum $\left({}^{D_i}_{X_i}\right)_{i\in I}$ on $X_0$
satisfies the hypothesis of Theorem  \ref{coroset}. It follows from Theorem \ref{theo-iso-panelization} 
that the panelization morphism 
\begin{equation*}
\Theta_J: 
\bbD \left( \left( {}^{\sigma_J^{-1}D_i}_{ \bbD \left( \left( ^{D_j}_{X_j} \right)_{j \in J}  {}_{X_i} \right)}  \right)_{i \in I\setminus J}   {~}_{\bbD  \left(\left({}^{D_j}_{X_j}\right)_{j\in J} {~}_{X_0}\right)} \right) \to \bbD \left( \left({}^{D_i}_{X_i} \right)_{i \in I} {~~}_{X_0} \right)
\end{equation*}
of Proposition  \ref{theopanelization-mor} 
is an isomorphism of $X_0$-schemes.
The left hand side of the above equation 
is isomorphic to $\mathrm{Def}\left(
\mathrm{Def}((Z_J|_{Z_i})_{j\in J}, Z_i)_{i\in I\setminus J}, \mathrm{Def}((Z_j)_{j\in J}, Z_0)\right)$ by Lemma \ref{lem:def-space-translate} whereas the right hand side equals  $\mathrm{Def}((Z_i)_{i\in I},Z_0)$ by definition. 
\xpf

\medskip

\exam{$($The double deformation space$)$.}\label{ex:double-space} For $I=(\{1,2\},\leq)$, the $\mathbb{A}^2_B$-scheme 
\begin{equation*}
\mathrm{Def}(Z_2,Z_1,Z_0) = \mathrm{Spec}_{{Z_0}}(\mathcal{O}_{Z_0}[t_1,t_2][\frac{\mathcal{I}_{Z_2}}{t_2},\frac{\mathcal{I}_{Z_1}}{t_1t_2}])  
\end{equation*}
is the double deformation space, cf.  \cite[10.4 p. 376]{Ro96} and  \cite[Proposition 9.1]{Ma24d}. The two panelization isomorphisms in Example \ref{ex:double-space1} express the double deformation space as an iterated deformation space and take the following form:

\medskip

{\small
\begin{tikzcd}[ampersand replacement=\&, row sep=1.6ex, column sep=6ex]
\mathrm{Def}\left( \mathrm{Def}(Z_2,Z_2),\mathrm{Def}(Z_1,Z_0)\right)=\mathrm{Def}\left( Z_2\times_B\mathbb{A}^1_{B,t_1},\mathrm{Def}(Z_1,Z_0)\right) 
\ar[drrr, controls={+(+6,0) and +(-5,0)}, "\Theta_{\{1\}}"] 
 \& \& \& 
\\
\& \& \& \mathrm{Def}(Z_2,Z_1,Z_0). 
\\
\mathrm{Def}\left( \mathrm{Def}(Z_2,Z_1),\mathrm{Def}(Z_2,Z_0)\right)
\ar[urrr, controls={+(+3,0) and +(-5,0)},swap, "\Theta_{\{2\}}"] 
\end{tikzcd}    
}

\smallskip
Note that $\mathrm{Def}(Z_2,Z_1,Z_0)$ is not isomorphic to $\mathrm{Def}(\mathrm{Def}(Z_2,Z_0)|_{Z_1},\mathrm{Def}(Z_2,Z_0))$,  compare with \cite[3.2.19]{DJK21}. Indeed,  already in the affine case  $Z_0=\mathrm{Spec}(A)$,  the ring 
$A[t_1,t_2][\frac{I_{Z_2}}{t_2},\frac{I_{Z_1}}{t_1t_2}]$ is not isomorphic to the coordinate ring 
$(A[t_2][\frac{I_{Z_2}}{t_2}])[t_1][\frac{I_{Z_1}}{t_1}]$ 
of $\mathrm{Def}(\mathrm{Def}(Z_2,Z_0)|_{Z_1},\mathrm{Def}(Z_2,Z_0))$. 
\xexam

\vspace{1em}

\exam{$($The triple deformation space$)$.} \label{ex:Triple_space} For $I=(\{1,2,3\},\leq)$, one gets an  $\mathbb{A}^3_B$-scheme $\mathrm{Def}(Z_3,Z_2,Z_1,Z_0)$ which we call the triple deformation space. Its panelization isomorphisms can be deduced from those  described in Example
\ref{ex:3-space-polyptych} and Figure \ref{fig:figp3}. For instance, the panel $p_7$ in Figure \ref{fig:figp3} provides an isomorphism 
$$\Theta_{\{2,3\}}:\mathrm{Def}(\mathrm{Def}(Z_{3},Z_{2},Z_{1}),\mathrm{Def}(Z_{3},Z_{2},Z_{0}))\to \mathrm{Def}(Z_3,Z_2,Z_1,Z_0).$$
In a similar vein, the following three panelization isomorphisms, corresponding to the panels $p_2$, $p_5$ and $p_6$, express the triple  deformation space as an iterated double deformation space:  

\medskip

\begin{tikzcd}[ampersand replacement=\&, row sep=1.6ex, column sep=6ex]
\mathrm{Def}(Z_{3}\times_{B}\mathbb{A}_{B,t_{1}}^{1},Z_{2}\times_{S}\mathbb{A}_{S,t_{1}}^{1},\mathrm{Def}(Z_{1},Z_{0})) \ar[drrrr, controls={+(+6,0) and +(-3,0)}, pos=0.1, "\Theta_{\{1\}}"] \\
\mathrm{Def}(Z_{3}\times_{B}\mathbb{A}_{B,t_{2}}^{1},\mathrm{Def}(Z_{2},Z_{1}),\mathrm{Def}(Z_{2},Z_{0})) \ar[rrrr, controls={+(+6,0) and +(-2,0)}, pos=0.17, "\Theta_{\{2\}}"]
 \& \& \& \& \mathrm{Def}(Z_3,Z_2,Z_1,Z_0).  \\
\mathrm{Def}(\mathrm{Def}(Z_{3},Z_{2}),\mathrm{Def}(Z_{3},Z_{1}),\mathrm{Def}(Z_{3},Z_{0}))
\ar[urrrr, controls={+(+6,0) and +(-3,0)}, swap, pos=0.1, "\Theta_{\{3\}}"]
\end{tikzcd}  
\xexam

\subsection{Strata of deformation spaces} \label{subsec:Strata-RV-Def}
For a subset $S$ of $I$ with $m$ elements, the $S$-stratum 
$\mathrm{Def}((Z_i)_{i\in I},Z_0)\times_{X_0} \Sigma_S$
of $\mathrm{Def}((Z_i)_{i\in I},Z_0)$ as introduced in section \ref{secstrata} equals the restriction of the structure morphism $\mathrm{Def}((Z_i)_{i\in I},Z_0)\to \mathbb{A}^n_B$ over the linear subspace $H_S=V((t_s)_{s\in S})\cong \mathbb{A}^{n-m}_{B,t_{I\setminus S}}$  of $\mathbb{A}^n_S$.

\theo \label{th:strata-def} With the notation above, the following hold:
\begin{enumerate}
  \item The restriction of  $\mathrm{Def}((Z_i)_{i\in I},Z_0)\to \mathbb{A}^n_{B}$ over $\mathbb{A}^n_{B}\setminus\bigcup_{i\in I} H_i\cong \mathbb{G}_{m,B}^n$ is canonically isomorphic to $Z_0\times_B\mathbb{G}_{m,B}^n$.
  \item The exceptional stratum $\mathrm{Def}((Z_i)_{i\in I},Z_0)\times_{X_0} \Sigma_I\cong  \mathrm{Def}((Z_i)_{i\in I},Z_0)\times_{\mathbb{A}^n_B} H_I$ is canonically isomorphic as a scheme over $Z_n$ to the total space of the vector bundle $$\mathbb{V}(\bigoplus_{i=1}^{n} \mathcal{C}_{Z_i/Z_{i-1}}|_{Z_n})=\bigoplus_{i=1}^n N_{Z_i/Z_{i-1}}|_{Z_n} \to Z_n,$$ where $\mathcal{C}_{Z_i/Z_{i-1}}$ denotes the conormal sheaf of the regular closed immersion $Z_i\subset Z_{i-1}$.
  \item More generally, for every nonempty subset $S=(\{s_1,\ldots, s_m\},\leq)$ of $I$, the $S$-stratum 
  $$\mathrm{Def}((Z_i)_{i\in I},Z_0)\times_{X_0} \Sigma_S\cong \mathrm{Def}((Z_i)_{i\in I},Z_0)\times_ {\mathbb{A}^n_B} H_S$$
  is canonically isomorphic to the total space of a vector bundle over  $\mathrm{Def}((Z_i|_{Z_{s_m}})_{i\in I\setminus S}, Z_{s_m})$. 
\end{enumerate}
\xtheo
\pf The first assertion follows from Corollary \ref{cor:isocomplement}. 

We now proceed to show (ii), by induction on the number $n$ of elements of $I$. For $n=1$, the isomorphism 
$\mathrm{Def}(Z_1,Z_0)\times_{\mathbb{A}^1_{B,t_1}} V(t_1)\cong N_{Z_1/Z_0}$ of schemes over $Z_1$ is a central property of the usual affine deformation space \cite[$\S$2]{Ver76} and \cite[(10.4)]{Ro96}, see also \cite[Proposition 2.9 (3)]{MRR20}. We now assume that the assertion holds for every index set with strictly less than $n$ elements and let $S=\{1,\ldots, n-1\}\subset I$.  Under our regularity assumptions, it follows from Theorem \ref{theo-iso-panelization} that the panelization morphism
\begin{equation}\label{eq:def-panel-strata}
\Theta_S: 
\bbD \left( {}^{\sigma_S^{-1}D_n}_{X_n ~~~ \bbD  \left(\left({}^{D_s}_{X_s}\right)_{s\in S} {~}_{X_0}\right)} \right) \to \bbD \left( \left({}^{D_i}_{X_i} \right)_{i \in I} {~~}_{X_0} \right)=\mathrm{Def}((Z_i)_{i\in I},Z_0)
\end{equation}
of Proposition  \ref{theopanelization-mor} 
is an isomorphism of $X_0$-schemes. Lemma \ref{lem:def-space-translate}  identifies  the left hand side of equation \eqref{eq:def-panel-strata} with $\mathrm{Def}(Z_n\times_B \mathbb{A}^n_{B,t_S},\mathrm{Def}((Z_s)_{s\in S},Z_0))$. By Proposition \ref{smoothsmoo},  $\mathrm{Def}((Z_s)_{s\in S},Z_0)$ is a smooth $\mathbb{A}^{n-1}_{B,t_S}$-scheme and since $Z_n\times_B \mathbb{A}^{n-1}_{B,t_S}$ is smooth over $\mathbb{A}^n_{t_S}$ as well, the same proposition implies that  $\mathrm{Def}(Z_n\times_B \mathbb{A}^n_{B,t_S},\mathrm{Def}((Z_s)_{s\in S},Z_0))$ is a smooth $\mathbb{A}^n_{B,t_I}$-scheme, which therefore belongs to the category $\mathrm{Sch}_{H_S}^{\{V(t_n)\cap H_S\}\text{-reg}}$. This implies that the left hand side of equation \eqref{eq:def-panel-strata} belongs to the category  $\Sch_{\Sigma_S}^{\{D_n\cap \Sigma_S\}\text{-reg}}$. This allows to apply 
Proposition \ref{higher_strata} to conclude that $\Theta_S$ induces an isomorphism between 
$$\bbD \left( {}^{D_n\times_{X_0}  \Sigma_S}_{X_n\times_{X_0} \Sigma_S ~~~ \bbD  \left(\left({}^{D_s}_{X_s}\right)_{s\in S} {~}_{X_0}\right)\times_{X_0} \Sigma_S} \right)\cong
\mathrm{Def}(Z_n, \mathrm{Def}((Z_s)_{s\in S},Z_0)\times_{\mathbb{A}^n_{B,t_S}} H_S)$$ and $\mathrm{Def}((Z_i)_{i\in I},Z_0)\times_{\mathbb{A}^n_{B,t_I}} H_S$ as schemes over $\mathbb{A}^1_{B,t_n}$. 
By induction hypothesis, the $Z_{n-1}$-scheme  
$\mathrm{Def}((Z_s)_{s\in S},Z_0)\times_{\mathbb{A}^n_{B,t_S}} H_S$ is isomorphic to the total space of the vector bundle $$\mathrm{p}_{n-1}:V_{n-1}:=\bigoplus_{i=1}^{n-1} N_{Z_i/Z_{i-1}}|_{Z_{n-1}}\to Z_{n-1}.$$ 
Moreover, under this isomorphism, the closed immersion of $Z_n$ into $\mathrm{Def}(Z_n, \mathrm{Def}((Z_s)_{s\in S},Z_0)\times_{\mathbb{A}^n_{B,t_S}} H_S$ 
is the composition of the inclusion $Z_n\to Z_{n-1}$ with the zero section of $\mathrm{p}_{n-1}:V_{n-1}\to Z_{n-1}$. Appealing again to the case $n=1$ combined with Proposition \ref{prop:dila-zero-section} (iii), gives the conclusion that 
$$\mathrm{Def}((Z_i)_{i\in I},Z_0)\times_{\mathbb{A}^n_{B,t_I}} H_I \cong \mathrm{Def}(Z_n, \mathrm{Def}((Z_s)_{s\in S},Z_0)\times_{\mathbb{A}^n_{B,t_S}} H_S)\times_{\mathbb{A}^1_{B,t_n}} H_n$$ 
is a vector bundle over $Z_n$ isomorphic to $$N_{Z_n/\mathrm{Def}(Z_n, \mathrm{Def}((Z_s)_{s\in S},Z_0)}\cong N_{Z_n/Z_{n-1}}\oplus V_{n-1}|_{Z_n}=\bigoplus_{i=1}^n N_{Z_i/Z_{i-1}}|_{Z_n}.$$

 To prove (iii),  we appeal to 
 Proposition \ref{th:vb-strata} combined with Lemma \ref{lem:def-space-translate} and Theorem \ref{th:DefSpace-panels} to conclude that the panelization isomorphism $\Theta_{I/S}$ induces an isomorphism 
 \begin{equation}\label{eq:def-strata-lower-codim}
  \mathrm{Def}((Z_i)_{i\in I},Z_0)\times_B H_S \cong \mathrm{Def}(\mathrm{Def}((Z_i|_{Z_s})_{i\in I\setminus S}, Z_s)_{s \in S}, \mathrm{Def}((Z_i)_{i\in I\setminus S}, Z_0)))\times_B H_S
 \end{equation} 
 of schemes over 
 $\mathrm{Def}((Z_i|_{Z_{s_m}})_{i\in I\setminus S},Z_{s_m})$. The conclusion then follows from (ii) which asserts that the right hand side of equation \eqref{eq:def-strata-lower-codim} is canonically a vector bundle over $\mathrm{Def}((Z_i|_{Z_{s_m}})_{i\in I\setminus S},Z_{s_m})$.
 \xpf

\coro If $n\geq 2$, then for every $s\in I$, 
the $s$-stratum $\mathrm{Def}((Z_i)_{i\in I},Z_0)\times_{\mathbb{A}^n_B} V(t_s)$ is a vector bundle over $\mathrm{Def}(Z_n,\ldots, Z_{s+1},Z_s)\times_B \mathbb{A}^r_{B, (t_{1},\ldots t_s)}$ isomorphic to  
\begin{equation} \label{eq:codim1-strat_Def}
\mathrm{Def}(Z_n,\ldots, Z_{s+1}, N_{Z_s/Z_{s-1}},\ldots N_{Z_s/Z_1}, N_{Z_s/Z_0}),
\end{equation}
where for every $1\leq i\leq s-1$,  $N_{Z_s/Z_{i}}\to N_{Z_s/Z_{i-1}}$ is the natural closed immersion and where $Z_{s+1}\to N_{Z_s/Z_{s-1}}$ is the composition of the closed immersion $Z_{s+1}\to Z_s$ with the zero section $Z_s\to N_{Z_s/Z_{s-1}}$ of the normal bundle $N_{Z_s/Z_{s-1}}\to Z_s$ of $Z_s$ in $Z_{s-1}$. Moreover, the scheme \eqref{eq:codim1-strat_Def} is isomorphic as a vector bundle over $\mathrm{Def}(Z_n,\ldots, Z_{s+1},Z_s)\times_B \mathbb{A}^r_{B, (t_{1},\ldots t_s)}$ to 
\begin{equation}
\mathrm{Def}(N_{Z_s/Z_{s-1}}|_{Z_n},\ldots,  N_{Z_s/Z_{s-1}}|_{Z_{s+1}}, N_{Z_s/Z_{s-1}},\ldots N_{Z_s/Z_1}, N_{Z_s/Z_0}).
\end{equation}
\xcoro
\pf
The first assertion is a reformulation of Example \ref{cod1strata}. For the second assertion, we are reduced by using the panelization isomorphisms
\begin{align*}
  & \; \mathrm{Def}(Z_n,\ldots, Z_{s+1}, N_{Z_s/Z_{s-1}},\ldots, N_{Z_s/Z_1},N_{Z_s/Z_0}) \\  \cong  & \; \mathrm{Def}(\mathrm{Def}(Z_n, (N_{Z_s/Z_i})_{i\leq s-1}),\ldots \mathrm{Def}(Z_{s+1}, (N_{Z_s/Z_i})_{i\leq s-1})) 
\end{align*}
and
$$
 \mathrm{Def}(Z_j, (N_{Z_s/Z_i})_{i\leq s-1}))
 \cong  \mathrm{Def}(\mathrm{Def}(Z_j,N_{Z_s/Z_{s-1}}),\ldots, \mathrm{Def}(Z_j,N_{Z_s/Z_{0}}))
$$
for all $j=s+1,\ldots, n$, to showing that for a triple of closed immersions $Z_2\subset Z_1\subset Z_0$ the schemes $\mathrm{Def}(Z_2,N_{Z_1/Z_0})$  and $\mathrm{Def}(N_{Z_1/Z_0}|_{Z_2},N_{Z_1/Z_0})$ 
are isomorphic as vector bundles over $\mathrm{Def}(Z_2,Z_1)$. But this follows from Proposition \ref{prop:dila-zero-section} in Appendix B which implies more precisely that these two schemes over $\mathrm{Def}(Z_2,Z_1)$ are isomorphic to the pullback 
of the vector bundle $N_{Z_1/Z_0}\times_B \mathbb{A}^1_B\to Z_1\times_A \mathbb{A}^1_B$
by the dilatation morphism  $\mathrm{Def}(Z_2,Z_1)\to Z_1\times_B \mathbb{A}^1_B$.
\xpf

\exam{$($Strata of the double deformation space$)$.} \label{ex:Rost_double}The following table summarizes the different descriptions  the strata of the double deformation space computed using the panelization isomorphisms of Example \ref{ex:double-space1} combined with Theorem  \ref{th:vb-strata} and Proposition \ref{higher_strata}. 
\vspace{-1em}
\begin{center}
{\small 
\begin{tabular}{|l|c|c|c|}
\hline 
 & Panelization $\Theta_{\{1\}}$  & Panelization $\Theta_{\{2\}}$ & Rost's description \tabularnewline
\hline 
\hline 
$-\times_{\mathbb{A}^2_B} V(t_1)$  & $\mathrm{Def}(Z_2,N_{Z_1/Z_0})$ & $N_{\mathrm{Def}(Z_2,Z_1)/\mathrm{Def}(Z_2,Z_0)}$ & $\mathrm{Def}(N_{Z_1/Z_0}|_{Z_2},N_{Z_1/Z_0})$\tabularnewline
\hline 
$-\times_{\mathbb{A}^2_B} V(t_2)$
& $N_{Z_3\times\mathbb{A}^{1}_{B,t_1}/\mathrm{Def}(Z_1,Z_0)}$ & $\mathrm{Def}(N_{Z_2/Z_1},N_{Z_2/Z_0})$ & $\mathrm{Def}(N_{Z_2/Z_1},N_{Z_2/Z_0})$\tabularnewline
\hline 
$-\times_{\mathbb{A}^2_B} V(t_1,t_2)$
 & $N_{Z_2/N_{Z_1/Z_0}}$ & $N_{N_{Z_2/Z_1}/N_{Z_2/Z_0}}$ & $N_{(N_{Z_1/Z_0}|_{Z_2})/N_{Z_1/Z_0}}=N_{N_{Z_2/Z_1}/N_{Z_2/Z_0}}$\tabularnewline
\hline 
\end{tabular}
}
\par\end{center}

\noindent In the first line of the table, the schemes $\mathrm{Def}(Z_2,N_{Z_1/Z_2})$ and 
$\mathrm{Def}(N_{Z_1/Z_0}|_{Z_2},N_{Z_1/Z_0})$ are both isomorphic as schemes over  $\mathrm{Def}(Z_2,Z_1)$ to the pullback  of the vector bundle $$N_{Z_1/Z_0}\times_B \mathbb{A}^1_B\to Z_1\times_A \mathbb{A}^1_B$$
by the dilatation morphism  $\mathrm{Def}(Z_2,Z_1)\to Z_1\times_B \mathbb{A}^1_B$, see Proposition \ref{prop:dila-zero-section} in Appendix B.

In the last line of the table, the schemes $N_{Z_2/N_{Z_1/Z_0}}$ and $N_{N_{Z_2/Z_1}/N_{Z_2/Z_0}}$ are both isomorphic as schemes over $Z_2$ to the vector bundle $N_{Z_2/Z_1}\times_{Z_2} (N_{Z_1/Z_0}|_{Z_2})\to Z_2$. 
\xexam

\exam{$($Strata of the triple deformation space$)$.} The following table collects a selection of expressions for the strata of the triple deformation space $\mathrm{Def}(Z_3,Z_2,Z_1,Z_0)$ of Example \ref{ex:Triple_space}. The third column of the table indicates the panelization isomorphisms of the polyptych $\mathcal{P}(3)$ in Figure \ref{fig:figp3} from which these expressions are deduced using the techniques in subsection \ref{panel-strat}.
\vspace{-1em}
\begin{center}
{\small 
\begin{tabular}{|l|c|l|}
\hline 
$\times_{\mathbb{A}_{B}^{3}}V(t_{1})$ & $\mathrm{Def}(Z_{3},Z_{2},N_{Z_{1}/Z_{0}})\cong\mathrm{Def}(N_{Z_{1}/Z_{0}}|_{Z_{3}},N_{Z_{1}/Z_{0}}|_{Z_{2}},N_{Z_{1}/Z_{0}})$ & $(p_{2})$\tabularnewline
\hline 
$\times_{\mathbb{A}_{B}^{3}}V(t_{2})$ & $\mathrm{Def}(Z_{3},N_{Z_{2}/Z_{1}},N_{2_{2}/Z_{0}})\cong\mathrm{Def}(N_{Z_{2}/Z_{1}}|_{Z_{3}},N_{Z_{2}/Z_{1}},N_{2_{2}/Z_{0}})$ & $(p_{5})$\tabularnewline
\hline 
$\times_{\mathbb{A}_{B}^{3}}V(t_{3})$ & $\mathrm{Def}(N_{Z_{3}/Z_{2}},N_{Z_{3}/Z_{1}},N_{Z_{3}/Z_{0}})$ & $(p_{6})$\tabularnewline
\hline 
$\times_{\mathbb{A}_{B}^{3}}V(t_{1},t_{2})$ & $\mathrm{Def}(Z_{3},N_{Z_{2}/N_{Z_{1}/Z_{2}}})\cong\mathrm{Def}(N_{Z_{2}/N_{Z_{1}/Z_{2}}}|_{Z_{3}},N_{Z_{2}/N_{Z_{1}/Z_{2}}})$ & $(p_{12})$ \tabularnewline
\hline 
$\times_{\mathbb{A}_{B}^{3}}V(t_{1},t_{3})$ & $\mathrm{Def}(Z_{3},N_{Z_{2}/N_{Z_{1}/Z_{0}}})\cong\mathrm{Def}(N_{Z_{2}/N_{Z_{1}/Z_{0}}}|_{Z_{3}},N_{Z_{2}/N_{Z_{1}/Z_{0}}})$ & $(p_{13})$\tabularnewline
\hline 
$\times_{\mathbb{A}_{B}^{3}}V(t_{2},t_{3})$ & $\mathrm{Def}(N_{Z_{3}/N_{Z_{2}/Z_{1}}},N_{N_{Z_{3}/Z_{2}}/N_{Z_{3}/Z_{0}}})\cong\mathrm{Def}(N_{Z_{3}/N_{Z_{2}/Z_{1}}},N_{Z_{3}/N_{Z_{2}/Z_{0}}})$ & $(p_{17}) \textrm{ and } (p_{18})$\tabularnewline
\hline 
$\times_{\mathbb{A}_{B}^{3}}V(t_{1},t_{2},t_{3})$ & $N_{Z_{3}/Z_{2}}\times_{Z_{3}}(N_{Z_{2}/Z_{1}}|_{Z_{3}})\times_{Z_{3}}(N_{Z_{1}/Z_{0}})|_{Z_{3}}$ & Theorem \ref{th:strata-def} (ii) \tabularnewline
\hline 
\end{tabular}
}
\end{center}
 We leave to the reader to find out  several other expressions which can be deduced by using the other panelization isomorphisms. 
\xexam

\newpage

\appendix
\pagestyle{fancy}
 \fancyhf{}
\fancyfoot[C]{\thepage}
 \fancyhead[CE]{{\small APPENDIX}} 
\fancyhead[CO]{{\small SYMMETRIC DEFORMATION SPACES}}
 \renewcommand{\headrulewidth}{0pt}

\section{Symmetric multi-centered deformation spaces}\label{appendix:vbsym}
In this appendix we consider from the viewpoint of multi-centered dilatations another related notion of multiple deformation space, introduced in \cite[10.6]{Ro96}, \cite[3.1.3]{Ivorra14} and \cite[§2]{Le26}. 

\subsection{ Definition and universal property} \label{ssapsym}
\defi \label{defdatumsym} A \emph{symmetric deformation datum} on a scheme $X$ indexed by a finite set $I$ is a pair $\left({}^{D_I}_{Y_I}\right)=\left({}^{D_i}_{Y_i}\right)_{i\in I}$
consisting of a collection of closed subschemes $Y_I=\{Y_i\}_{i\in I}$ of $X$ and a collection $D_I=\{D_i\}_{i\in I}$ of locally principal closed subschemes of $X$. 
\xdefi


\defi 
\label{def:mc-desymf}The \emph{multi-centered symmetric deformation space} of a symmetric deformation datum $\left({}^{D_I}_{Y_I}\right)$ on a scheme $X$ is the $X$-scheme
\begin{equation} 
\label{sigma_Isym} 
\sigma_I^{sym}:\bbD^{sym} \left(\left({}^{D_I}_{Y_I}\right) {}_{X} \right)=\bbD^{sym} \left(\left({}^{D_i}_{Y_i}\right)_{i\in I} {}_{X} \right) := \Bl \left\{{}^{ D_i}_{Y_i}  \right\}_{i \in I} X\to X,
\end{equation}
that is, the dilatation of $X$ with multi-center $\{[Y_i, D_i ]\}_{i \in I}$.
\xdefi

 \prop \label{Propunivdefsym}\label{coronumberofmorsym}
 The multi-centered symmetric deformation space $\sigma^{sym}_I:\bbD^{sym} \left(\left({}^{D_I}_{Y_I}\right) {}_{X} \right)\to X$ represents the contravariant functor 
 $ \Sch _{X}^{\{D_i\}_{i\in I} \text{-reg}} \to Sets$ defined by
\begin{equation}\label{blow.up.iso.eqsym}
(f: T\to X) \;\longmapsto\; \begin{cases}\left\{*\right\}, \; \text{if $f^{-1} \left(D_i \right) \subset f^{-1} \left( Y_i \right) $ in $Clo \left( T \right) $ for every $i \in I  $};\\ \varnothing,\;\text{else.}\end{cases}
\end{equation}
 In particular, if $T \to X $ is an object in the category $\Sch _{X}^{\{D_i\}_{i\in I} \text{-reg}}$, we have \[\#\Hom _X \left( T, \bbD^{sym} \left(\left({}^{D_I}_{Y_I}\right) {}_{X} \right)\right) \in \left\{0,1\right\}.\]
 \xprop
 \pf
 This is a consequence of Definition \ref{def:mc-desymf} and \cite[Proposition 3.17]{Ma24d}.
 \xpf 

\lemm \label{lem:baseclosym}
Let $\left({}^{D_I}_{Y_I}\right)=\left({}^{D_i}_{Y_i}\right)_{i\in I}$ be a symmetric deformation datum on a scheme $X$ and let $X'\subset X$ be a closed subscheme of $X$. Assume that $\bbD^{sym} \left(\left({}^{D_I}_{Y_I}\right) {}_{X} \right)\times_X X' \to X'$ 
belongs to the category $\Sch_{X'}^{\{X' \cap D_i\}_{i \in I}\text{-reg}}$. Then there exists a unique isomorphism of  $X' $-schemes 
$$\bbD ^{sym}\left( \left({}^{X' \cap D_I}_{X' \cap Y_I}\right){~}_{X'}\right) \to \bbD^{sym} \left(\left({}^{D_I}_{Y_I}\right) {}_{X} \right)\times_X X'.$$
\xlemm 
\pf
The existence of the desired isomorphism follows from \cite[Lemma 3.36]{Ma24d} and its uniqueness from the universal property of dilatations.
\xpf

\subsection{Panelization of symmetric deformation spaces } \label{ssapjhsym}
For a symmetric deformation datum 
$\left({}^{D_I}_{Y_I}\right)$ on a scheme $X$ and a subset $J$ of $I$, Proposition \ref{Propunivdefsym}  implies the existence of a unique  $X$-morphism $\delta_{I,J} : \bbD _I ^{sym} \to \bbD _J ^{sym}$. On the other hand, the base change  $ \left({}^{D_i \times _X \bbD^{sym}_J}_{Y_i \times_X {\bbD ^{sym}_J}}\right)_{i \in I \setminus J}$
is a symmetric deformation datum on the scheme $\bbD_J ^{sym}$. 

\defi \label{def:panel-datumsym} 
For every subset $J\subset I$, the multi-centered symmetric deformation space
\begin{equation} \label{inducedsym}
\xi_{J}:\bbD\bbD_{J}^{sym}:=
 \bbD ^{sym}\left( \left({}^{D_i \times _X \bbD^{sym}_J}_{Y_i \times_X {\bbD ^{sym}_J}}\right)_{i \in I \setminus J}{~}_{\bbD_J ^{sym}}\right)
 \to \bbD_J ^{sym}.
 \end{equation}
is called the \emph{$J$-th panel} of the multi-centered symmetric deformation space $\bbD_I$.
\xdefi

The next proposition asserts the existence of a panelization morphism $\Phi_J: \bbD\bbD_J^{sym}\to \bbD_I ^{sym}$ lifting $\xi_J:\bbD\bbD_J^{sym}\to \bbD_J^{sym}$ which, in contrast with the situation for the multi-centered deformation space considered in Proposition \ref{theopanelization-mor} and Theorem  \ref{theo-iso-panelization}, is always an isomorphism.  

\prop \label{theopanelization-morsym} Let $\left({}^{D_I}_{Y_I}\right)$ be a symmetric  deformation datum on a scheme $X$ and let $J$ be a subset of $I$. Then there exists a unique $X$-morphism  
 $\Phi_J: \bbD\bbD_J^{sym}\to \bbD_I ^{sym}$.  Moreover, $\Phi_J$ is an isomorphism and $\xi _J = \delta_{I,J} \circ \Phi_J$. 
\xprop

\pf The existence of $\Phi_J$ and the fact that it is an isomorphism follows from \cite[Proposition 3.28 and its proof]{Ma24d}. The second assertion follows from Proposition \ref{coronumberofmorsym}.
\xpf 

\coro \label{strassym} 
Let $S\subset J $ be a subset and let $\Sigma_S=\bigcap_{s\in S} D_s$. Assume that the scheme $\bbD^{sym}_I \times _{X} \Sigma_S  \to  \Sigma_S$ belongs to the category $\Sch_{\Sigma_S}^{\{D_i\cap \Sigma_S\}_{i\in I \setminus J}\text{-reg}}$. Then there exists a unique isomorphism of  $\bbD_J \times_X \Sigma_S$-schemes 
\begin{equation} \label{hfrakssym}
\epsilon_{J,S} : \bbD_I^{sym} \times_X \Sigma_S \isomto \bbD ^{sym}\left( \left({}^{D_i \times _X \bbD^{sym}_J \times _{X} \Sigma_S}_{Y_i \times_X {\bbD ^{sym}_J} \times _{X} \Sigma_S}\right)_{i \in I \setminus J}{~}_{\bbD_J ^{sym}\times _{X} \Sigma_S}\right).
   \end{equation}
\xcoro
\pf
The existence of the desired isomorphism follows from \cite[Lemma 3.36]{Ma24d} and its uniqueness from the universal property of dilatations.
\xpf

\subsection{Symmetric deformation space of a collection of transverse closed subschemes}\label{sec:Verdier-Rostsym}

In this section, we establish basic properties of symmetric deformation spaces specialized to the situation of the deformation space of a collection  of closed immersions  between schemes smooth over a locally Noetherian base scheme which satisfy suitable transversality conditions. 

\begin{assum} \label{VR-Defsym}
We consider a collection of closed immersions $$((V_i)_{i\in I},V_0): \qquad V_i \hookrightarrow V_0, \qquad I=\{1,\ldots, n\}$$ 
of schemes smooth over a locally Noetherian base scheme $B$. We denote by $\mathcal{I}_{V_i}\subset \mathcal{O}_{V_0}$ the ideal sheaf of $V_i$ and by $\mathcal{C}_{V_i/V_0}$ its conormal sheaf.   

We let $\mathbb{A}^n_{B}=\mathbb{A}^{n}_{B,t_I}=B\times_{\mathbb{Z}}\mathrm{Spec}(\mathbb{Z}[t_1,\ldots,t_n])$, $X_0=X_{I,0}=V_0\times_B \mathbb{A}^n_B$ and for every $i\in\{1,\ldots,n\}$, we put $Y_i=Y_{I,i}=V_i\times_B \mathbb{A}^n_{B}=\mathbb{A}^n_{V_i}$  and  $D_i=D_{I,i}=V_0\times_B V(t_i)\subset X_0$. For every subset $S\subset I$ of cardinal $r$, we denote by $H_S=H_{I,S}\cong \mathbb{A}^{n-r}_{B,t_{I\setminus S}}$ the closed subscheme $V((t_s)_{s\in S})$ of $\mathbb{A}^n_B$ and we put $\Sigma_S=\Sigma_{I,S}= V_0\times_B H_S$.
\end{assum}

\defi With the notation and assumption above, the \emph{symmetric deformation space} of the collection of closed immersions $((V_i)_{i\in I},V_0)$ is the $X_0$-scheme  $$\Def^{sym}((V_i)_{i\in I},V_0) := \bbD^{sym} \left(\left( {}^{D_i }_{Y_i}\right) _{i\in I } {}_{X_0}  \right)=\mathrm{Bl}\left\{{}^{D_i}_{X_i}\right\}_{i\in \{1,,\ldots,n\}}X_0\to X_0.$$
\xdefi 

By definition of the multi-centered symmetric deformation space as a multi-centered dilatation, $\Def^{sym}((V_i)_{i\in I},V_0)$ is isomorphic as a scheme over $X_0$ 
to the relative spectrum over $X_0$ of the $\mathbb{Z}^n$-graded quasi-coherent $\mathcal{O}_{X_0}$-subalgebra 
\begin{equation} \label{eq:multiple-def-algebrasym}
\mathcal{O}_{X_0}\left[\frac{(\mathcal{I}_{V_n})}{t_n},\ldots, \frac{(\mathcal{I}_{V_k})}{t_k},\ldots, \frac{(\mathcal{I}_{V_1})} {t_1}\right] \cong 
\bigoplus_{(\nu_1,\ldots,\nu_n)\in \mathbb{Z}^n} \mathcal{I}_{V_1}^{\nu_1}\cdots \mathcal{I}_{V_k}^{\nu_k}\cdots  \mathcal{I}_{V_n}^{\nu_n}t_1^{-\nu_1}\cdots t_k^{-\nu_1 }\cdots t_n^{-\nu_n}
\end{equation}
of $\mathcal{O}_{V_0}[t_1^{\pm 1},\ldots, t_n^{\pm 1}]$, where, by convention,  $\mathcal{I}_{V_j}^{\nu_j}=\mathcal{O}_{V_0}$ for every $\nu_j<0$.

\medskip 

\exam \label{ex:DefSym=Def}For $I=\{1\}$, the scheme $$\Def^{sym}(V_1,V_0)=\mathrm{Bl}_{V_1\times_B \mathbb{A}^1_{B,t_1}}^{V_0\times_B V(t_1)} (V_0\times_B \mathbb{A}^1_{B,t_1})=\mathrm{Spec}_{V_0\times_B \mathbb {A}^1_{B,t_1}}(\bigoplus_{n\in \mathbb{Z}} \mathcal{I}_{V_1}^n t_1^{-n})$$ is the deformation space $\mathrm{Def}(V_1,V_0)$ of the closed immersion $V_1\subset V_0$, see Example \ref{ex:Def-Space-1}.
\xexam

We view $\Def^{sym}((V_i)_{i\in I},V_0)$ as a scheme over $\mathbb{A}^n_B=\mathbb{A}^n_{B,t_I}$ via the composition of the dilatation morphism $\sigma:\Def^{sym}((V_i)_{i\in I},V_0)\to X_0=V_0\times_B \mathbb{A}^n_B$ with the second projection.
For every subset $J$ of $I$, the collection $\left({}^{D_j}_{Y_j}\right)_{j\in J}$ is a symmetric deformation datum on $X_0$. Since multi-centered dilatations commute with flat base change \cite[Corollary 2.42, §3.7]{Ma24d}, it follows that there is a canonical isomorphism of $X_0$-schemes 
\begin{equation*}
\label{eq:flat-bc-sym}
    \begin{array}{rcl} 
\bbD^{sym}\left(\left({}^{D_{I,j}}_{Y_{I,j}}\right)_{j\in J} ~ {}_{X_{I,0}}\right) & = &    \bbD^{sym}\left(\left({}^{D_{J,j}\times_B \mathbb{A}^n_{B,t_{I\setminus J}}}_{Y_{J,j}\times_B \mathbb{A}^n_{B,t_{I\setminus J}}} \right)_{j\in J} ~ {}_{X_{J,0}\times_B \mathbb{A}^n_{B,t_{I\setminus J}}}\right) \\ & \cong & 
\bbD^{sym}\left(\left({}^{D_{J,j}}_{X_{J,j}} \right)_{j\in J} ~ {}_{X_{J,0}}\right) \times_B \mathbb{A}^n_{B,t_{I\setminus J}} \\
& \cong &
\Def^{sym}((V_j)_{j\in J},V_0)\times_{B} \mathbb{A}^n_{B,t_{I\setminus J}}. 
\end{array}
\end{equation*}

From now on, we assume further that the collection of closed subschemes $(V_i)_{i\in I}$ satisfy the following additional  condition:

\begin{assum} \label{asum:transverse}
For all $(m_1,\ldots,m_n)\in \mathbb{Z}^n$, the canonical homomorphism of $\mathcal{O}_{V_0}$-modules 
\begin{equation}
\label{eq:strong-transversality}
\mathcal{I}_{V_1}^{m_1}\otimes_{\mathcal{O}_{V_0}} \cdots \otimes_{\mathcal{O}_{V_0}} \mathcal{I}_{V_n}^{m_n}\to \mathcal{I}_{V_1}^{m_1}\cdots \mathcal{I}_{V_n}^{m_n}
\end{equation}
is an isomorphism.
\end{assum}

The following proposition  shows that under the  assumption above, the double deformation space considered by Levine \cite[§2]{Le26} via fiber products of deformations spaces  coincides with the symmetric deformation space of Rost \cite[(10.6)]{Ro96} and  Ivorra \cite[3.1.3]{Ivorra14}.
\prop \label{tensorsym}
With the notation and assumption above, for  every partition $I= I_1 \sqcup \ldots \sqcup I_k$ of $I$, there exist unique isomorphisms of $X_0$-schemes
$$\Def^{sym}((V_i)_{i\in I},V_0) \cong \Def^{sym}((V_i)_{i\in I_1},V_0)) \times _{V_0} \ldots \times _{V_0}  \Def^{sym}((V_i)_{i\in I_k},V_0).$$
\xprop 
\pf 
The uniqueness is guaranteed by
Proposition \ref{Propunivdefsym}. To prove the existence part, it suffices to consider the case of the partition $I=\{1\}\sqcup \cdots \sqcup \{n\}$. Putting $\mathcal{A}_i=\bigoplus_{\nu_i \in \mathbb{Z}} \mathcal{I}_{V_i}^{\nu_i} t_i^{-\nu_i}$, $i\in I$, and
$\mathcal{A}=\bigoplus_{\nu=(\nu_1,\ldots, \nu_n)\in \mathbb{Z}^n} \mathcal{I}_{V_1}^{\nu_1}\cdots  \mathcal{I}_{V_n}^{\nu_n} t_1^{-\nu_1}\cdots t_n^{-\nu_n}$ and using equation \eqref{eq:multiple-def-algebrasym}, it suffices to prove that the natural homomorphism of $\mathbb{Z}^n$-graded  ${\calO _{X_0}}$-algebras $ \mathcal{A}_1\otimes_{\mathcal{O}_{V_0}} \cdots \otimes_{\mathcal{O}_{V_0}} \mathcal{A}_n \to \mathcal{A}$ is an isomorphism. 
This holds exactly when the canonical homomorphism of  equation \eqref{eq:strong-transversality} 
is an isomorphism for every $\nu=(\nu_1,\cdots, \nu_n)\in \mathbb{Z}^n$, which is precisely guaranteed by Assumption \ref{VR-Defsym}. 
\xpf 

\exam Let $((V_i)_{i\in I},V_0)$ be a collection of closed immersions as in Assumption \ref{VR-Defsym} which are globally transversal, in the sense that for every subset $J\subset I$, the closed subscheme $\bigcap_{j\in J} V_j$ of $V_0$ is smooth over $B$ with conormal sheaf equal to the direct sum of the restrictions of the conormal sheaves of the $V_j$, $j\in J$. Then the collection $((V_i)_{i\in I},V_0)$ satisfies Assumption  \ref{asum:transverse} and hence satisfies the conclusion of Proposition \ref{tensorsym}. Indeed, the transversality condition implies that at every point $v\in \bigcap_{j\in J} V_j$, the ideals $\mathcal{I}_{V_j,v}\subset \mathcal{O}_{V_0,v}$ are generated by disjoint regular subsequences $s_j$, $j\in J$, of a regular sequence in the maximal ideal of $\mathcal{O}_{V_0,v}$, a property which implies, by Proposition \ref{AppPro:Tensor-equal-product}, that the homomorphism in equation  \eqref{eq:strong-transversality} is an isomorphism. 

Note however that Assumption \ref{asum:transverse} is strictly weaker than transversality: for instance, for a pair $V_1$, $V_2$ of closed subschemes of $V_0$ smooth over $B$, it follows from the proof of Proposition  \ref{AppPro:Tensor-equal-product} that Assumption \ref{asum:transverse} is equivalent to the vanishing of $\mathrm{Tor}^{\mathcal{O}_{V_0}}_2(\mathcal{O}_{V_0}/\mathcal{I}_{V_1}\mathcal{O}_{V_0}/\mathcal{I}_{V_2})$. So for instance, Assumption \ref{asum:transverse} is satisfied for $V_1=V_2=\{0\}_B$ in $\mathbb{A}^1_B$ or for the pair of non-transverse coordinate lines $V_1=\{x_1=x_2=0\}$ and $V_2=\{x_2=x_3=0\}$ in $\mathbb{A}^3_B$.
\xexam

Finally, we consider for a subset $S$ of $I$ with $m$ elements  the structure of the stratum  $$\Def^{sym}((V_i)_{i\in I},V_0)\times_{X_0} \Sigma_S\cong \Def^{sym}((V_i)_{i\in I},V_0)\times_{B} H_S$$
over the linear subspace $H_S=V((t_s)_{s\in S})\cong \mathbb{A}^{n-m}_{B,t_{I\setminus S}}$ of $\mathbb{A}^n_B$. 
By \cite[Proposition 3.16]{Ma24d}, for all $i \in I$, the morphism  $\Def^{sym}((V_i)_{i\in I},V_0)\times_{X_0} \Sigma_S \to \Sigma_S=V_0\times_B H_S$ factors through $\bigcap_{s\in S} Y_s\cap D_s$ so that  we have a canonical isomorphism
$$\Def^{sym}((V_i)_{i\in I},V_0)\times_{X_0} \Sigma_S \cong \Def^{sym}((V_i)_{i\in I},V_0)\times_{X_0} ((\bigcap_{s\in S} V_s) \times_B H_S)$$   
of schemes over $Q_S=Q_{I,S}:=(\bigcap_{s\in S} V_s )\times_B H_S$.

\theo \label{th:strata-sym} 
 With the assumption and notation above, the following hold: 
 
\begin{enumerate}
  \item The restriction of  $\Def^{sym}((V_i)_{i\in I},V_0)\to \mathbb{A}^n_{B}$ over $\mathbb{A}^n_{B}\setminus\bigcup_{i\in I} H_i\cong \mathbb{G}_{m,B}^n$ is canonically isomorphic to $V_0\times_B\mathbb{G}_{m,B}^n$.
  \item The stratum $\mathrm{Def}((V_i)_{i\in I},V_0)\times_{X_0} \Sigma_I\cong  \mathrm{Def}((V_i)_{i\in I},V_0)\times_B H_I$ is canonically isomorphic as a scheme over $Q_I\cong \bigcap_{i\in I} V_i$ to the total space of the vector bundle $$\mathbb{V}(\bigoplus_{i=1}^{n} \mathcal{C}_{V_i/V_{0}}|_{Q_I})=\bigoplus_{i=1}^n N_{V_i/V_{0}}|_{Q_I} \to Q_I.$$ 
  \item More generally, for every nonempty subset $S=(\{s_1,\ldots, s_m\},\leq)$ of $I$, the $S$-stratum 
  $$\mathrm{Def}((V_i)_{i\in I},V_0)\times_{X_0} \Sigma_S\cong \mathrm{Def}((V_i)_{i\in I},V_0)\times_{B} H_S$$
  is canonically isomorphic 
  as a scheme over $\Def^{sym}((V_i)_{i\in I\setminus S},V_0)$ to the total space of the vector bundle 
 $$ (\bigoplus_{s\in S} N_{V_s/V_0}|_{\bigcap_{s\in S} V_s})\times_{V_0}\Def^{sym}((V_i)_{i\in I\setminus S},V_0)\to \Def^{sym}((V_i)_{i\in I\setminus S},V_0).$$
\end{enumerate}
\xtheo
 \pf The first assertion follows the definition of $\Def((V_i)_{i\in I},V_0)$ as a multi-centered dilation. For assertion (ii),  combining Example \ref{ex:DefSym=Def} and Proposition  \ref{tensorsym} with \cite[Proposition 2.9]{MRR20} gives
 \begin{equation*}
 \begin{array}{rcl}
 \Def^{sym}((V_i)_{i\in I},V_0)\times_B H_I &\cong & (\Def(V_n,V_0)\times_{V_0}\cdots \times_{V_0} \Def(V_1,V_0))\times_B H_I \\
 & \cong & (\Def(V_n,V_0)\times_{V_0} V(t_n))\times_{B} \cdots \times_{V_0} (\Def(V_1,V_0)\times_B V(t_1)) \\ 
 & \cong & N_{V_n/V_0}\times_{V_0}\cdots \times_{V_0} N_{V_1/V_0}  \cong  \bigoplus_{i=1}^n N_{V_i/V_0}|_{Q_I}.
 \end{array} 
 \end{equation*}
For assertion (iii), by applying Proposition \ref{tensorsym} to the partition $I=S\sqcup I\setminus S$ of $S$, we obtain the isomorphism 
\begin{equation*}
 \begin{array}{rcl}    
\Def^{sym}((V_i)_{i\in I},V_0)\times_B H_{I,S} & \cong & (\Def^{sym}((V_i)_{i\in I\setminus S},V_0)\times_{V_0} \Def^{sym}((V_s)_{s\in S},V_0))\times_B H_{I,S} \\
& \cong & \Def^{sym}((V_i)_{i\in I\setminus S},V_0)\times_{V_0} (\Def^{sym}((V_s)_{s\in S},V_0))\times_B H_{S,S}), 
\end{array}
\end{equation*}
 where $\Def^{sym}((V_i)_{i\in I\setminus S},V_0)$ is viewed as a scheme over $V_0\times_B \mathbb{A}^n_{B,t_{I\setminus S}}$ and $\Def^{sym}((V_s)_{s\in S},V_0)$ as a scheme over $V_0\times_B\mathbb{A}^n_{B,t_S}$. By assertion (ii), $\Def^{sym}((V_s)_{s\in S},V_0))\times_B H_{S,S}$ 
is isomorphic to the vector bundle $\bigoplus_{s\in S} N_{V_s/V_0}|_{Q_{S,S}}$ over $Q_{S,S}=\bigcap_{s\in S} V_s$ and the conclusion follows.  
\xpf

\newpage
\pagestyle{fancy}
 \fancyhf{}
\fancyfoot[C]{\thepage}
 \fancyhead[CE]{{\small APPENDIX}} 
\fancyhead[CO]{{\small ELEMENTARY TRANSFORMATIONS OF VECTOR BUNDLES}}
 \renewcommand{\headrulewidth}{0pt}

\section{On some mono-centered dilatations of vector bundles}\label{appendix:vb}
Our aim is to describe two natural types of mono-centered dilatations of vector bundles. 
 \subsection{Elementary dilatations of vector bundles}
  We consider the following setup:  $X$ is a locally noetherian scheme, 
  $$\pi_{E}:E=\mathbb{V}(\mathcal{E})=\mathrm{Spec}_X(\mathrm{Sym}^{\bullet} \mathcal{E}) \to X$$ is a vector bundle associated to a coherent locally free $\mathcal{O}_{X}$-module $\mathcal{E}$, $i:D\hookrightarrow X$ is an effective Cartier divisor on $X$ and  $\pi_{F}:F=\mathbb{V}(\mathcal{F})\to D$ is a vector bundle on $D$, associated to a coherent locally free $\mathcal{O}_{D}$-module $\mathcal{F}$, which is a sub-vector bundle of the restriction $E|_{D}=E\times_{X}D\cong\mathbb{V}(i^{*}\mathcal{E})\to D$ of $E$ to $D$. Then $[F,E|_D]$ is a mono-center in $E$ and we let $\sigma :\Bl_{F}^{E|_{D}}E\to E$ be the corresponding dilatation morphism.
  
  The following proposition is a reinterpretation  in the language of dilatation of the notion and construction of elementary transformations between vector bundles first introduced and developed by Maruyama, see \cite[$\S$ 1]{Mar73} and \cite[$\S$1 p. 242]{Mar82}, for their applications to the study of vector bundles on higher dimensional projective varieties:
 
  \prop \label{prop:Maruyama-dilatations}
 The composition $\pi_E\circ \sigma:\Bl_{F}^{E|_{D}}E\to X$ is a vector bundle on $X$. If moreover $F=E'|_D$ for some sub-vector bundle $\pi_{E'}:E'\to X$ of $\pi_E:E\to X$ then there exists a unique morphism of $E$-scheme $E'\to \Bl_{F}^{E|_{D}}E$ and the latter is a closed immersion of $E'$ as the total space of  a sub-vector bundle of $\pi_E\circ \sigma:\Bl_{F}^{E|_{D}}E\to X$.
 \xprop
 
 \pf
Let  $\mathcal{I}\subset \mathcal{O}_X$ be the invertible ideal sheaf of $D$, let $p:i^*\mathcal{E}\to \mathcal{F}$ be the surjective homomorphism of $\mathcal{O}_D$-modules corresponding to the closed immersion of $F$ as a sub-vector bundle of $E|_D$ and let  $q:\mathcal{E}\to i_*\mathcal{F}$ be the surjective homomorphism of $\mathcal{O}_X$-modules adjoint to $p$. 

Letting $\mathcal{F}'$ is the kernel of $p$, we claim that the kernel $\mathcal{G}$ of $q$ is a locally free $\mathcal{O}_X$-module which fits into the following commutative diagram 
{\small 
\begin{equation*}
\begin{tikzcd}
   & & 0 \ar[d] & 0 \ar[d]   \\ 
   0 \ar[r]  & \mathcal{E}\otimes\mathcal{I} \ar[d, equal],\ar[r] & \mathcal{G} \ar[r] \ar[d] &  i_{*}\mathcal{F}' \ar[r] \ar[d] & 0 \\
   0 \ar[r]  & \mathcal{E}\otimes\mathcal{I} \ar[r] & \mathcal{E} \ar[r] \ar[d, swap, "q"] &  i_{*}i^*\mathcal{E} \ar[d] \ar[r] & 0 \\
   & & i_*\mathcal{F} \ar[r,equal] \ar[d] & i_*\mathcal{F} \ar[d]\\
   & & 0  & 0 
\end{tikzcd}
\end{equation*}
}
whose lines and columns are exact sequences of $\mathcal{O}_{X}$-modules. The existence and properties of the above diagram are immediate from the definition of $\mathcal{G}$. The fact that $\mathcal{G}$ is locally free can be verified on the stalks at closed points $x$ of $X$ as follows. First observe that if  $x$ is not a point of $D$, then $(i_*\mathcal{F})_x$ is the zero sheaf so that  $\mathcal{G}_x=\mathcal{E}_x$ which is by assumption a free $\mathcal{O}_{X,x}$-module. Now assume that $x$ is a point of $D$, let $A=\mathcal{O}_{X,x}$ and  let $t\in A$ be a generator of the ideal $\mathcal{I}_x\subset A$. Without loss of generality, we can assume that $\mathcal{E}_x= A^{\oplus n}$ for some $n\geq 0$ and that $(i_*\mathcal{F})_{x}=(A/tA)^{\oplus m}$ for some $m\leq n$. Then there exists an automorphism $\varphi$ of the $A/fA$-module $(A/tA)^{\oplus n}$ such that the surjection $q_x:(i^*\mathcal{E})_x\cong (A/tA)^{\oplus n}\to \mathcal{F}_x\cong (A/tA)^{\oplus m}$ is the composition of $\varphi$ with the projection onto the first $m$ factors. Let $\phi$ be any lift of $\varphi$ to an $A$-module endomorphism of $A^{\oplus n}$. Then $\phi$ is an automorphism since its determinant is an element of $A$ invertible modulo $t$, whence invertible in $A$ because $t$ belongs to the maximal ideal of $A$ and it follows that $p_x:\mathcal{E}_x\to (i_*\mathcal{F})_x$ is the composition of $\phi:A^{\oplus n}\to A^{\oplus n}$, the projection $A^{\oplus n}\to A^{\oplus m}$ onto the first $m$ factors and the direct sum of $m$ copies of the canonical homomorphism $A\to A/fA$. This implies in turn that $\mathcal{G}_x$ is isomorphic to the free $A$-module $(tA)^{\oplus m}\oplus A^{\oplus n-m}$.  

Now let $\tilde{\mathcal{E}}=\mathcal{G}\otimes \mathcal{I}^{\vee}$. Then $\tilde{\mathcal{E}}$ is a locally free $\mathcal{O}_X$-module and the first line of the diagram above provides an injective homomorphism $\mathcal{E}\cong (\mathcal{E}\otimes \mathcal{I})\otimes \mathcal{I}^{\vee} \to \tilde{\mathcal{E}}$. The latter determines in turns an affine morphism of $X$-schemes $\tau:\tilde{E}:=\mathbb{V}(\tilde{\mathcal{E}})\to E$ which is an isomorphism over $X\setminus D$ and whose restriction over $D$ is the contraction of $\tilde{E}|_{D}$ onto the sub-vector bundle $F$ of $E|_{D}$. Since $\tilde{E}$ obviously belongs to the category $\Sch_E^{D\text{-reg}}$ and since, by construction, we have $\tau^{-1}(E|_D)=\tau^{-1}(F)$ it follows from Theorem-Definition 
\ref{Def-Multi-centered} that $\tau$ factors through a unique morphism of $X$-schemes $\epsilon:\tilde{E}\to\mathrm{Bl}_{F}^{E|_{D}}E$. To complete the proof, it remains to verify that $\epsilon$ is an isomorphism. This property being local on $X$ with respect to the fpqc topology, it suffices to show that for every closed point $x\in X$ the base change of $\epsilon$ by the canonical morphism $\mathrm{Spec}(\mathcal{O}_{X,x})\to X$ is an isomorphism. With the notation of the previous paragraph, we can thus assume that $\mathcal{E}=A^{\oplus n}$, $\mathcal{F}=(A/tA)^{\oplus m}$,   $\mathcal{G}= (tA)^{\oplus m} \oplus A^{\oplus n-m}$ and hence, by construction, that $\tilde{\mathcal{E}}\cong A^{\oplus m} \oplus (t^{-1}A)^{\oplus n-m}$. The choice of a basis  $x_1,\ldots, x_n$ of $A^{\oplus n}$ determines an isomorphism of $A$-algebras  $\mathrm{Sym}^{\bullet}\mathcal{E}\cong A[x_1,\ldots, x_n]$ for which the ideal of $F$ as a subscheme of $E$ is generated by the elements $t,x_{m+1},\ldots, x_n$. Now one gets from the construction in the affine case reviewed just after from Theorem-Definition 
\ref{Def-Multi-centered} that the dilatation $\Bl_F^{E|_D}E$ of $E$ is isomorphic to the spectrum of the $A$-algebra $A[x_1,\ldots,x_m, t^{-1}x_{m+1}, \ldots, t^{-1}x_n] \cong \mathrm{Sym}^{\bullet}\tilde{\mathcal{E}}$.

Now assume that  $F=E'|_D$ for some sub-vector bundle $j:E'\hookrightarrow E$.  Then  $j^{-1}(E|_D)=E'|_D=j^{-1}(E'|_D)=F$ which is a Cartier divisor on $E'$. Thus, $E'$ belongs to the category $\Sch_E^{{E|_D}\text{-reg}}$ and the universal property of $\sigma:\mathrm{Bl}_{F}^{E|_D} E\to E$ in Theorem-Definition \ref{Def-Multi-centered} implies that $j$ factors through a unique morphism of $E$-schemes $\tilde{j}:E'\to \mathrm{Bl}_{F}^{E|_D} E$. It is then straightforward to check on an open cover of $X$ by affine open subsets over which $\pi_{E}:E\to X$ and $\pi_{E'}:E'\to X$ are trivial vector bundles and $D$ is a principal Cartier divisor that $\tilde{j}:E'\to \mathrm{Bl}_{F}^{E|_D} E$ is a closed immersion of $E'$ as the total space of a sub-vector bundle of $\pi_E:E\to X$.
 \xpf
 
\exam\label{ex:dilatation_zero_section}
With the notation of Proposition \ref{prop:Maruyama-dilatations}, assume that $F$ is the restriction to $D$ of the image of the zero section $s_{0,E}:X\to E$ of $\pi_E:E\to X$, equivalently, $\mathcal{F}$ is the zero sheaf on $D$. Then the vector bundle $\pi_E\circ \sigma:\mathrm{Bl}_{F}^{E|_D} E=\mathrm{Bl}_{s_{0,E}(X)}^{E|_D}\to X$ of is isomorphic to
\begin{equation}
\pi_{E[D]}:E[D]:=\mathbb{V}(\mathcal{I}_D^\vee \otimes \mathcal{E})\to X.
\end{equation}
The dilatation morphism $\sigma:E[D]=\mathbb{V}(\mathcal{I}_D^\vee \otimes \mathcal{E})\to \mathbb{V}(\mathcal{E})=E$ is induced by the canonical homomorphism of $\mathcal{O}_X$-modules $\iota\otimes \mathrm{id}_{\mathcal{E}}: \mathcal{E}=\mathcal{O}_X\otimes \mathcal{E}\to \mathcal{I}_D^{\vee} \otimes \mathcal{E}$, where $\iota:\mathcal{O}_X\to \mathcal{I}^\vee_D$ is the homomorphism dual to the inclusion $\mathcal{I}_D\subset \mathcal{O}_X$. It contracts the restriction $E[D]|_D$ of $E[D]$ over $D$ to $s_{0,E}(D)\subset E|_D$ and restricts to an isomorphism over $X\setminus D$. 
\xexam

\subsection{Exceptional vector bundles} 
We consider the following setup over a locally noetherian base scheme $S$: $j:Y\to X$ is a closed immersion of smooth $S$-schemes and   $i:D\to X$ is an effective Cartier divisor on $X$ with ideal sheaf $\mathcal{I}_D\subset \mathcal{O}_X$ such that $Y$ and $D$ intersect transversely in $X$, in the sense that for $Z=Y\times_X D$, the induced closed immersions $\iota_Y:Z\to Y$ and $\iota_D:Z\to D$ are both regular. We let $\mathcal{E}$ be a coherent locally free $\mathcal{O}_X$-module on $X$, we denote by $\pi_E:E=\mathbb{V}(\mathcal{E})\to X$ and $\pi_{E[D]}:E[D]=\mathbb{V}(\mathcal{I}_D^{\vee}\otimes \mathcal{E})\to X$ the associated vector bundles and by $\sigma:E[D]\to E$ the dilatation $X$-morphism as in Example \ref{ex:dilatation_zero_section}. We let  $s_{0,E}:X\to E$ be the zero section of $\pi_E$ and  $\pi_{E|_Y}:E|_Y=\mathbb{V}(j^*\mathcal{E})\to Y$ and  $\pi_{E|_D}:E|_D=\mathbb{V}(i^*\mathcal{E})\to D$ be the restrictions of $E$ to $Y$ and $D$ respectively.  We let  $\hat{j}=s_{0,E}\circ j:Y\to E$ so that we have a commutative diagram 
\[\xymatrix@=13ex{ E|_Y \ar@<1ex>[d]^{\pi_{E|_Y}} \ar[r]^{j_E} & E \ar@<1ex>[d]^{\pi} \\ Y \ar@<1ex>[u]^{s_{0,E|_Y}} \ar[r]^{j} \ar[ur]^{\hat{j}} & X. \ar@<1ex>[u]^{s_{0,E}} } \] 
This datum determines a mono-center $[Y,D]$ on $X$ and a pair mono-centers $[\hat{j}(Y),E|_D]$ and  $[j_E(E|_Y),E|_D]$ on $E$. 
We let 
$$ \alpha:\mathrm{Bl}_Y^D X\to X, \quad 
\tau_0:\Bl_{\hat{j}(Y)}^{E|_D} E\to E \quad \textrm{and} \quad \tau:\Bl_{j_E(E|_Y)}^{E|_D} E\to E$$ be the corresponding dilatation morphisms and consider the last two schemes  
 as $X$-schemes via the  compositions $\pi_E\circ \tau_0$ and $\pi_E\circ \tau$ respectively. 
 
\prop \label{prop:dila-zero-section} With the notation above the following hold:
\begin{enumerate}
    \item The exists a commutative diagram 
\[\xymatrix{\mathrm{Bl}_{\hat{j}(Y)}^{E|_D} E \ar[d] \ar[r] \ar[dr]^{\tau_0} & \mathrm{Bl}_{E|_Y}^{E|_D} E \ar[d]^{\tau} \ar[r] & \mathrm{Bl}_{Y}^D X \ar[d]^{\alpha} \\
E[D] \ar[r]^{\sigma} & E \ar[r]^{\pi_{E}} & X}
\]    
in which the three rectangles are cartesian.    

In particular, the $\mathrm{Bl}_Y^D X$-schemes, $\mathrm{Bl}_{E|_Y}^{E|_D} E$ and  $\mathrm{Bl}_{\hat{j}(Y)}^{E|_D} E$ are respectively isomorphic to the vector bundles 
 \begin{equation}
 \alpha^*E=\mathbb{V}(\alpha^*\mathcal{E})\to \mathrm{Bl}_Y^D X \quad \textrm{and} \quad  \alpha^*E[D]=\mathbb{V}(\alpha^*(\mathcal{I}_D^{\vee}\otimes \mathcal{E}))\to \mathrm{Bl}_Y^D X.
 \end{equation}
 \item The morphisms $\mathrm{p}_2:(\Bl_{\hat{j}(Y)}^{E|_D} E)\times_X D\to D$ and $\mathrm{p}_2:(\Bl_{E|_Y}^{E|_D} E)\times_X D\to D$ factor through the regular  closed immersion $\iota_D:Z\to D$ and the associated $Z$-schemes are canonically isomorphic to the vector bundles 
  \begin{equation}
 \mathbb{V}(\iota_Y^*(\mathcal{C}_{Y/X}\oplus j^*\mathcal{E})\otimes \iota_D^*\mathcal{C}_{D/X}^{\vee})\to Z \; \textrm{and} \; \mathbb{V}(\iota_Y^*\mathcal{C}_{Y/X}\otimes \iota_D^*\mathcal{C}_{D/X}^{\vee} \oplus (j\circ \iota_Y)^*\mathcal{E})\to Z,
  \end{equation}
 where $\mathcal{C}_{Y/X}$ and $\mathcal{C}_{D/X}$ are the conormal sheaves of the regular immersions $j:Y\to X$ and $i:D\to X$ respectively.
\end{enumerate}
\xprop

\pf 
It is straightforward to verify that the schemes $$ \mathrm{pr}_2:(\mathrm{Bl}_{Y}^{D} X)\times_X E\to E 
\textrm{ and }\mathrm{pr}_2:(\mathrm{Bl}_{Y}^{D} X)\times_X E[D]\to E[D]$$ belong to the categories $\Sch_{E}^{E|_D\text{-reg}}$ and $\Sch_{E[D]}^{E[D]|_D\text{-reg}}$ respectively. It then follows from flat base change \cite[Lemma 2.7]{MRR20} that the canonical morphisms  $$ \mathrm{Bl}_{E|_Y}^{E|_D} E \to (\mathrm{Bl}_{Y}^{D} X)\times_X E \textrm{ and } \mathrm{Bl}_{E[D]|_Y}^{E[D]|_D} E[D] \to (\mathrm{Bl}_{Y}^{D} X)\times_X E[D]$$ are both isomorphisms. This implies in particular that the right hand square of the diagram is cartesian. It remains to verify that the $E$-schemes $\mathrm{Bl}_{\hat{j}(Y)}^{E|_D} E$ and $\mathrm{Bl}_{E[D]|_Y}^{E[D]|_D} E[D]$ are isomorphic. By definition, $\mathrm{Bl}_{\hat{j}(Y)}^{E|_D} E$ belongs to the category $\Sch _{E}^{E|_D\text{-reg}}$ and since $\hat{j}(Y)=E|_Y\cap s_{0,E}(X)$, 
we have $$\tau_0^{-1}(E|_D)\subset \tau_0^{-1}(\hat{j}(Y))\subset \tau_0^{-1}(E|_Y)\cap \tau_0^{-1}(s_{0,E}(X)).$$ By the respective universal properties of $\tau:\mathrm{Bl}_{E|_Y}^{E|_D} E\to E$ and $\sigma:E[D]=\mathrm{Bl}_{s_{0,E}(X)}^{E|_D} E\to E$ in Theorem-Definition \ref{Def-Multi-centered}, this implies that  $\tau_0:\mathrm{Bl}_{\hat{j}(Y)}^{E|_D} E\to E$ factors through unique morphisms of $E$-schemes $\xi:\mathrm{Bl}_{\hat{j}(Y)}^{E|_D}\to \mathrm{Bl}_{E|_Y}^{E|_D} E$ and $\psi:\mathrm{Bl}_{\hat{j}(Y)}^{E|_D} E\to E[D]$ respectively. The universal property of the fiber product then implies in turn that $\tau_0$ factors through a unique morphism of $E$-schemes  $$\tilde{\tau}_0:\mathrm{Bl}_{\hat{j}(Y)}^{E|_D} E \to E[D]\times_E \mathrm{Bl}_{E|_Y}^{E|_D} E \cong E[D]\times_X \mathrm{Bl}_{Y}^{D} X\cong\mathrm{Bl}_{E[D]|_Y}^{E[D]|_D} E[D].$$  
As a scheme over $E$ via the composition $\theta:=\sigma\circ \mathrm{pr}_1=\tau\circ \mathrm{pr}_2$, 
$E[D]\times_X \mathrm{Bl}_{Y}^{D} X$ belongs to the category $\Sch_{E}^{E|_D\text{-reg}}$. Moreover, by combining the the properties of $\sigma$ and $\tau$, we deduce that $$\theta^{-1}(E|_D)\subset \theta^{-1}(s_{0,E}(X))\cap \theta^{-1}(E|_Y)=\theta^{-1}(\hat{j}(Y)),$$
which implies, by the universal property of $\tau_0:\mathrm{Bl}_{\hat{j}(Y)}^{E|_D} E\to E$ that $\theta$ factors through a unique morphism of $E$-schemes $\tilde{\theta}:E[D]\times_E \mathrm{Bl}_{E|_Y}^{E|_D} E \to \mathrm{Bl}_{\hat{j}(Y)}^{E|_D} E$. Since $\sigma$, $\tau$ and $\tau_0$ all restrict to isomorphisms over the Zariski dense open subset $E\setminus E|_D$ of $E$, the same holds for $\tilde{\tau}_0$ and $\tilde{\theta}$, from which it follows that they are isomorphisms of $E$-schemes, inverse to each other. This completes the proof of assertion (i).   

Assertion (ii) follows
from assertion (i) and \cite[Lemma 2.4 ; Proposition 2.9 (3)]{MRR20} which, under our assumption on $Z=Y\times_X D$, asserts 
that $\mathrm{p}_2:(\mathrm{Bl}_Y^D X)\times _X D \to D$ factors through $Z$ and  that the induced morphism $(\mathrm{Bl}_Y^D X)\times _X D\to Z$ is isomorphic to the vector bundle $\mathbb{V}(\iota^*_Y \mathcal{C}_{Y/X} \otimes \iota_D^*\mathcal{C}_{D/X}^{\vee})\to Z$. 
\xpf

\newpage
\pagestyle{fancy}
 \fancyhf{}
\fancyfoot[C]{\thepage}
 \fancyhead[CE]{{\small APPENDIX BY SYLVAIN BROCHARD, ADRIEN DUBOULOZ AND ARNAUD MAYEUX}}
\fancyhead[CO]{{\small IDEALS GENERATED BY MONOMIALS IN A REGULAR SEQUENCE}}
 \renewcommand{\headrulewidth}{0pt}
\section{Intersections, products, and sums of ideals generated by monomials in a regular sequence} \label{secap}
 \begin{center} {\bf \small by Sylvain Brochard, Adrien Dubouloz  and Arnaud Mayeux} \end{center}

\medskip

In this appendix, we prove that ideals generated by monomials in a regular sequence behave like monomial ideals in a polynomial ring with respect to intersections, sums and products. This might be well-known to experts, but we were unable to find a convenient reference. Before proceeding, let us fix some notations.

Let $x=(x_1,\dots, x_n)$ be a sequence of elements in a ring $A$. For $\alpha\in \N^n$, say $\alpha=(\alpha_1,\dots, \alpha_n)$, we denote by $x^\alpha$ the element
$ x^\alpha=x_1^{\alpha_1}\dots x_n^{\alpha_n}$.
The elements $x^\alpha$ for $\alpha\in \N^n$ are called the $x$-monomials of $A$, or simply the monomials if the sequence $x$ is understood. We denote by $\Mon_x(A)$, or just $\Mon(A)$, the set of monomials of $A$. In the remainder of this section, we fix a sequence $x$, and all monomials are monomials relatively to this sequence. We denote by $(x)$ the ideal $(x_1,\dots, x_n)$. 

If $I$ is an ideal of $A$, we denote by $\Mon(I)$ the set of monomials that belong to~$I$. In other words,
$\Mon(I)=I\cap \Mon(A)$. We denote by $\Delta_I$ the set of elements $\alpha\in \N^n$ such that $x^\alpha\in I$.

\defi
  An ideal $I$ of $A$ is a \emph{monomial ideal} if it is generated by monomials. 
\xdefi

There is a (partial) order on $\N^n$ given by the relation $(\alpha_1,\dots, \alpha_n)\leq (\beta_1,\dots, \beta_n)$ if and only if $\alpha_i\leq \beta_i$ for all $i$. If $\Delta$ is a subset of $\N^n$, we denote by $\Delta_{\textrm{min}}$ the set of minimal elements of $\Delta$, and by $\langle \Delta\rangle$ the ideal of $\N^n$ generated by $\Delta$.
\[
  \langle \Delta\rangle=\{\alpha+\beta \ |\ \alpha\in \Delta \textrm{ and }\beta\in \N^n\}.
\]
It is well-known that $\Delta_{\textrm{min}}$ is a finite set, and that for any $\alpha\in \Delta$, there is an element~$\beta\in \Delta_{\textrm{min}}$ such that $\beta\leq \alpha$.
We have the obvious relations $\langle \Delta_{\textrm{min}}\rangle=\langle \Delta\rangle$ and $\langle \Delta\rangle_{\textrm{min}}= \Delta_{\textrm{min}}$.

\prop
  We have the following properties for monomial ideals $I, J$ of $A$.
  \begin{enumerate}
    \item $I$ is generated by $\Mon(I)$.
    \item $I$ is generated by the elements $x^\alpha$ for $\alpha\in (\Delta_I)_{\textrm{\rm min}}$.
    \item $I=J$ if and only if $\Mon(I)=\Mon(J)$.
    \item The ideals $I+J$ and $IJ$ are monomial.
  \end{enumerate}
\xprop
\begin{proof}
  Let $S$ be a set of monomials that generate $I$. The inclusion $S\subset \Mon(I)$ implies (1). Assertions (2) and (3) follow immediately. (4) is obvious.
\end{proof}

\defi
  Let $I$ be a monomial ideal of $A$. The \emph{support} of $I$ is the set $\supp(I)$ of indices $i\in \{1,\dots, n\}$ such that $\alpha_i\neq 0$ for some $\alpha\in (\Delta_I)_{\textrm{\rm min}}$.
\xdefi

\begin{lem}
\label{lem:monomes_et_comb_lin}
  Assume that the sequence $x$ is quasi-regular. Let $f\in A$ be an element of the form
$f=\sum_{i=1}^r a_if_i$,  
  where $f_1,\dots, f_r$ are monomials of $A$ that are distinct to each other, and the $a_i$ are elements of $A\smallsetminus (x)$. Let $g_1,\dots, g_\ell$ be other monomials. Assume that $f$ belongs to the ideal  $(g_1,\dots, g_\ell)$. Then each element $f_i$ is a multiple of some $g_{j}$.
\end{lem}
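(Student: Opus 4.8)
The plan is to reduce everything to the one structural consequence of quasi-regularity that is really needed: for every $d\geq 0$, a relation $\sum_{|\mu|=d}c_\mu x^\mu\in (x)^{d+1}$ (with $c_\mu\in A$ and $|\mu|:=\mu_1+\dots+\mu_n$) forces $c_\mu\in (x)$ for all $\mu$. Equivalently, $\mathrm{gr}_{(x)}(A)$ is the polynomial ring $(A/(x))[X_1,\dots,X_n]$, so that $(x)^d/(x)^{d+1}$ is free over $A/(x)$ on the classes $X^\mu$ ($|\mu|=d$) of the degree-$d$ monomials; we shall only invoke this freeness. Two harmless reductions: if $(x)=A$ there is no $a_i\notin(x)$ and the lemma is vacuous, and if some $g_j$ equals $1$ then $\frakb:=(g_1,\dots,g_\ell)=A$ and every $f_i$ is trivially a multiple of that $g_j$; so assume henceforth $(x)\neq A$ and every $g_j\neq 1$.

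Write $f_i=x^{\alpha^{(i)}}$ and $g_j=x^{\beta^{(j)}}$, and set $S=\{\mu\in\N^n:\mu\geq\beta^{(j)}\text{ for some }j\}$, an upward closed set. It suffices to prove $\alpha^{(i)}\in S$ for every $i$, for then $\alpha^{(i)}\geq\beta^{(j)}$ for some $j$ and $f_i=x^{\alpha^{(i)}-\beta^{(j)}}g_j$. I argue by induction on the number $r$ of terms. Put $d=\min_i|\alpha^{(i)}|$ and $I_0=\{i:|\alpha^{(i)}|=d\}$; then $f\equiv\sum_{i\in I_0}a_ix^{\alpha^{(i)}}\pmod{(x)^{d+1}}$, since the remaining $f_i$ lie in $(x)^{d+1}$. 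The crux is the claim that, computed from $f=\sum_j b_jg_j$, the class of $f$ in $(x)^d/(x)^{d+1}$ is supported on the $X^\mu$ with $\mu\in S$. Granting this, write that class as $\sum_{|\mu|=d}e_\mu X^\mu$: the first expression gives $e_{\alpha^{(i_0)}}=\overline{a_{i_0}}$ for $i_0\in I_0$ (the $\alpha^{(i)}$ being pairwise distinct), while the claim gives $e_\mu=0$ for $\mu\notin S$; since $a_{i_0}\notin(x)$ forces $e_{\alpha^{(i_0)}}\neq 0$, we conclude $\alpha^{(i_0)}\in S$ for every $i_0\in I_0$.

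To prove the claim, start from $f=\sum_j b_jg_j$. For each $j$ with $|\beta^{(j)}|\leq d$, the identity $(x)^k=\sum_{|\gamma|=k}Ax^\gamma$ lets us write $b_j=\sum_{|\gamma|\leq d-|\beta^{(j)}|}u_{j\gamma}x^\gamma+(\text{an element of }(x)^{d-|\beta^{(j)}|+1})$, so that $b_jg_j\equiv\sum_\gamma u_{j\gamma}x^{\gamma+\beta^{(j)}}\pmod{(x)^{d+1}}$; each exponent $\gamma+\beta^{(j)}$ occurring is $\geq\beta^{(j)}$, hence lies in $S$, and has degree $\leq d$. Summing over $j$ yields $f\equiv W\pmod{(x)^{d+1}}$, where $W$ is a finite sum of terms $c_\mu x^\mu$ with $\mu\in S$, $|\mu|\leq d$, and moreover $W\in(x)^d$ because $W\equiv f\in(x)^d$. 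If a monomial of some degree $m<d$ occurs in $W$, its degree-$m$ homogeneous part $W_m$ is $W$ minus the pieces of degree $>m$, both of which lie in $(x)^{m+1}$ (the former since $W\in(x)^d\subseteq(x)^{m+1}$); hence $W_m\in(x)^{m+1}$, so by quasi-regularity its coefficients lie in $(x)$, and each term $c_\mu x^\mu$ of degree $m$ can be rewritten as a sum of terms $x^{\mu+\varepsilon}$ with $|\varepsilon|=1$ — of degree $m+1$ and still with exponent in $S$. As the minimal degree occurring in $W$ strictly increases under this operation, after finitely many steps $W\equiv\sum_{\mu\in S,\,|\mu|=d}c'_\mu x^\mu\pmod{(x)^{d+1}}$, which is the claim.

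Finally, to close the induction, choose for each $i\in I_0$ an index $j(i)$ with $\alpha^{(i)}\geq\beta^{(j(i))}$; then $\sum_{i\in I_0}a_if_i\in\frakb$, so $f-\sum_{i\in I_0}a_if_i=\sum_{i\notin I_0}a_if_i$ again lies in $\frakb$, has the same form and fewer than $r$ terms, and the inductive hypothesis handles the $f_i$ with $i\notin I_0$. I expect the main difficulty to be exactly the third paragraph — the degree-by-degree bookkeeping that turns $f=\sum_j b_jg_j$ into the congruence $f\equiv\sum_{\mu\in S,|\mu|=d}c'_\mu x^\mu\pmod{(x)^{d+1}}$. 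The subtlety is that $A$ need not be $(x)$-adically separated and $A/(x)^{d+1}$ carries only a filtration, not a grading, so the low-degree contributions of the individual summands $b_jg_j$ need not cancel termwise and must be reorganized, degree by degree, using quasi-regularity; this is really the assertion that $g_1,\dots,g_\ell$ behave like a Gr\"obner basis of $\frakb$ with respect to the $(x)$-adic filtration. Everything else is formal.
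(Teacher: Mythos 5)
Your proof is correct and follows essentially the same route as the paper's: reduce modulo $(x)^{d+1}$ for $d$ the minimal degree of the $f_i$, use quasi-regularity degree by degree to push the low-degree contributions of $\sum_j b_jg_j$ up to degree $d$ while remaining multiples of the $g_j$ (your degree-raising rewriting of $W$ is exactly the paper's ``replacing the $g_j$ by some multiples''), compare coefficients in the free $A/(x)$-module $(x)^d/(x)^{d+1}$ using $a_i\notin (x)$, and induct on the remaining terms. The staircase $S$ and the formal bookkeeping on $W$ only make explicit what the paper's shorter argument leaves implicit.
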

\begin{proof}
  Let $d$ be the minimal degree of the monomials $f_i$. Then $f\in (x)^d$. We can write $f=\sum b_jg_j$ for some $b_j\in A$. Let $t$ be the minimal degree of the monomials $g_j$. If $t<d$, we have $f\in (x)^d\subset (x)^{t+1}$, hence the relation $f=\sum b_jg_j$ projects to $\sum \ov{b_j}\ov{g_j}=0$ in $(x)^t/(x)^{t+1}$. Since the monomials of degree $t$ form an $A/(x)$-basis, it follows that $b_j\in (x)$. Hence, replacing the $g_j$ by some multiples if necessary, we may assume that $t\geq d$. Now, in $(x)^d/(x)^{d+1}$, we have $\sum_i \ov{a_i}\ov{f_i}=\sum_j \ov{b_j}\ov{g_j}$.
  Any monomial $f_i$ of degree $d$ on the left-hand side has a nonzero coefficient $\ov{a_i}$ (because $a_i\notin (x)$), hence it must appear on the right-hand side too. This means that each monomial $f_i$ of degree $d$ on the left-hand side is one of the $g_j$'s. Arguing by induction on $d$, the statement follows.
\end{proof}

\begin{cor}
\label{cor:monomes_somme_et_produit}
  Assume that the sequence $x$ is quasi-regular. Let $I, J$ be monomial ideals of $A$. Then:
  \begin{enumerate}
    \item $\Mon(I+J)=\Mon(I)\cup \Mon(J)$
    \item $\Mon(IJ)=\Mon(I).\Mon(J)$
  \end{enumerate}
\end{cor}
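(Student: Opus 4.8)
The plan is to reduce both identities to Lemma~\ref{lem:monomes_et_comb_lin}; the two inclusions $\Mon(I)\cup\Mon(J)\subseteq\Mon(I+J)$ and $\Mon(I)\cdot\Mon(J)\subseteq\Mon(IJ)$ are formal (as $I,J\subseteq I+J$, and a product of a monomial of $I$ by a monomial of $J$ is a monomial lying in $IJ$), so all the content is in the reverse inclusions.

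For those, I would first fix finite sets of monomial generators $I=(g_1,\dots,g_p)$ and $J=(h_1,\dots,h_q)$ --- using the Proposition above, e.g.\ the $x^\mu$ with $\mu$ in $(\Delta_I)_{\mathrm{min}}$, resp.\ $(\Delta_J)_{\mathrm{min}}$. Then $I+J$ is generated by the monomials $g_1,\dots,g_p,h_1,\dots,h_q$ and $IJ$ by the monomials $g_a h_b$. Given a monomial $x^\alpha$ in $\Mon(I+J)$ (resp.\ $\Mon(IJ)$), I would apply Lemma~\ref{lem:monomes_et_comb_lin} to $f=x^\alpha$, viewed as having the single coefficient $1\in A\smallsetminus(x)$ --- legitimate as long as $(x)\neq A$, the case $(x)=A$ being degenerate --- to conclude that $x^\alpha$ is a monomial multiple of one of the generators. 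In case (1) this gives $x^\alpha=x^\rho g_a$ or $x^\alpha=x^\rho h_b$ for some monomial $x^\rho$, hence $x^\alpha\in I$ or $x^\alpha\in J$. In case (2) it gives $x^\alpha=x^\rho g_a h_b$, and writing this as $(x^\rho g_a)\cdot h_b$ exhibits $x^\alpha$ as a product of a monomial of $I$ by a monomial of $J$. Both reverse inclusions follow.

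The one point needing care --- and the only genuine obstacle --- is the exact strength of the conclusion of Lemma~\ref{lem:monomes_et_comb_lin}: one must read ``$f_i$ is a multiple of $g_j$'' in the combinatorial sense that $f_i=(\text{monomial})\cdot g_j$, i.e.\ that the exponent vector of $f_i$ dominates that of $g_j$ componentwise. This is indeed what the proof of that lemma produces, since there the generators $g_j$ get replaced by their monomial multiples $x_k g_j$ to equalise degrees. Should one prefer to cite the lemma only in the weaker form $x^\alpha\in(g_j)$, one extra elementary step is required: for monomials $x^\alpha,x^\delta$ relative to a quasi-regular sequence, $x^\alpha\in(x^\delta)$ forces $\alpha\ge\delta$, which follows from the same leading-term computation in $(A/(x))[X_1,\dots,X_n]$ furnished by quasi-regularity. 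The rest is bookkeeping.
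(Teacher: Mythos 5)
Your proof is correct and follows essentially the same route as the paper's: both reduce the nontrivial inclusions to Lemma~\ref{lem:monomes_et_comb_lin} applied to $f=1\cdot x^\alpha$ (legitimate since $1\notin(x)$) with the monomial generators of $I+J$, resp.\ $IJ$, playing the role of the $g_j$'s. Your extra care about reading ``multiple'' in the combinatorial sense --- which is genuinely needed for part (2), where a mere ring-theoretic $x^\alpha\in(g_ah_b)$ would not immediately exhibit $x^\alpha$ as a product of a monomial of $I$ and a monomial of $J$ --- is a point the paper's one-line argument glosses over, and is a worthwhile clarification rather than a different approach.
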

\begin{proof}
  The inclusion $\Mon(I)\cup \Mon(J)\subset \Mon(I+J)$ is obvious. Conversely, let $f\in \Mon(I+J)$. Since $f\in I+J$, it is a linear combination of monomials of $I$ and monomials of $J$. By~\ref{lem:monomes_et_comb_lin}, it is a multiple of one of them. Hence $f\in \Mon(I)\cup \Mon(J)$ and this proves (1). The proof of (2) is very similar: the inclusion $\Mon(I)\Mon(J)\subset \Mon(IJ)$ is obvious. Conversely, let $f\in \Mon(IJ)$. Since $f\in IJ$, it is a linear combination of monomials of the form $gh$, where $g\in I$ and $h\in J$. By~\ref{lem:monomes_et_comb_lin}, it is a multiple of one of these products. Hence $f\in \Mon(I)\Mon(J)$.
\end{proof}

\begin{lem}
\label{lem:ecriture_standard}
  Assume that $A$ is a Noetherian local ring and that $(x)\neq A$. Let $I$ be a monomial ideal. Then any element  $f\in I$ can be written as a finite sum $f=\sum_i a_if_i$,
where the $f_i$ are pairwise distinct monomials of $I$, and $a_i\notin (x)$ for all $i$.
\end{lem}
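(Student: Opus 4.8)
The plan is to produce the representation by a rewriting procedure, starting from an arbitrary finite expression of $f$ and successively getting rid of the monomials whose coefficient lies in $(x)$. Since $A$ is Noetherian, $I$ is finitely generated, and since $I$ is a monomial ideal it is generated by the $x^\alpha$ with $\alpha\in(\Delta_I)_{\mathrm{min}}$, hence a fortiori by $\{x^\alpha\mid \alpha\in\Delta_I\}$. Therefore $f\in I$ admits a \emph{finite} expression $f=\sum_{\alpha\in\Delta_I}c_\alpha x^\alpha$ with $c_\alpha\in A$, and we may throw away the terms with $c_\alpha=0$. If $f=0$ this is the empty sum, so assume $f\neq 0$. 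If already all $c_\alpha\notin(x)$ we are done, since after gathering terms the $x^\alpha$ occurring are pairwise distinct monomials of $I$.

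\textbf{The elimination move.} Suppose some $c_\alpha\in(x)$, with $\alpha$ in the (finite) support. Because $(x)\subseteq\frakm$, Krull's intersection theorem gives $\bigcap_m(x)^m=0$, so $c_\alpha$ (being nonzero) has a well-defined finite $(x)$-adic order $e\geq 1$. As $c_\alpha\in(x)^e$ and $(x)^e$ is generated by the $x^\mu$ with $|\mu|=e$, write $c_\alpha=\sum_{|\mu|=e}c_\mu x^\mu$; since $c_\alpha\notin(x)^{e+1}$, not every $c_\mu$ lies in $(x)$. Replace the summand $c_\alpha x^\alpha$ by $\sum_{|\mu|=e}c_\mu x^{\alpha+\mu}$: as $x^\alpha\in I$, each $x^{\alpha+\mu}$ is again a monomial of $I$, so we still have a representation of $f$ of the required shape, and we then gather the summands carrying the same monomial. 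Expanding $c_\alpha$ to its exact order $e$ is what makes this useful: the offending summand $c_\alpha x^\alpha$, which sits in $(x)^{|\alpha|+e}\smallsetminus(x)^{|\alpha|+e+1}$, is traded for some summands with coefficient outside $(x)$ plus summands lying in $(x)^{|\alpha|+e+1}$, and gathering a ``good'' summand (coefficient $\notin(x)$) with a ``bad'' one always yields a ``good'' one.

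\textbf{Termination is the main obstacle.} The subtle point is that this procedure stops. Note that the naive invariant ``least degree of a monomial whose coefficient lies in $(x)$'' can \emph{increase} under an elimination step, so termination is not formal. The plan is a well-founded descent on the pair $(w,N)$, where $w$ is the minimal $(x)$-adic order of a summand with coefficient in $(x)$ and $N$ the number of such minimal-order bad summands: by the previous paragraph, eliminating one minimal-order bad summand strictly decreases $N$ and creates no new bad summand of order $\leq w$, so after finitely many steps $w$ increases strictly. Combining this with a finiteness input should close the argument: the part of the expression of $(x)$-adic order $<w$ stabilizes to a fixed finite sum $g$ with all coefficients outside $(x)$, while $f-g$ lies in $I\cap(x)^{w}$ with $w\to\infty$; invoking the Artin--Rees lemma (here the Noetherian hypothesis is used again) to write $I\cap(x)^w\subseteq(x)^{w-c}I$ for $w\gg 0$, together with the finiteness of $(\Delta_I)_{\mathrm{min}}$ (Dickson) to keep the support of $g$ finite, one forces $f-g$ eventually to be $0$. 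The resulting $g=\sum_i a_i f_i$ then has the $f_i$ pairwise distinct monomials of $I$ and $a_i\notin(x)$, as wanted. I expect this last finiteness/termination bookkeeping to be where the real work lies; the rest is routine.
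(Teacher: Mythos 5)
Your setup and your elimination move are both sound: the finiteness of the $(x)$-adic order of a nonzero coefficient follows from Krull's intersection theorem, the observation that not every $c_\mu$ can lie in $(x)$ is correct, and the bookkeeping showing that the pair $(w,N)$ improves at each step can be carried out (modulo defining the ``order'' of a summand formally as $|\alpha|+\mathrm{ord}(c_\alpha)$, since without quasi-regularity the product $c_\alpha x^\alpha$ need not avoid $(x)^{|\alpha|+e+1}$). The genuine gap is exactly the termination step that you flag as ``where the real work lies'', and the tools you invoke do not close it. Your descent yields, for every $w$, a decomposition $f=g_w+b_w$ with $g_w$ a finite sum of good terms and $b_w\in I\cap(x)^w$; to conclude via Krull that $f=g_w$ for some $w$ you would need $g_w$ to \emph{stabilize}, and nothing forces this. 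Each elimination step creates new good summands of degree equal to the current $w$, so as $w\to\infty$ the number of good terms can grow without bound, and the ``limit'' $\sum_i a_if_i$ is an infinite series that converges only $(x)$-adically, i.e.\ in the completion $\hat A$ --- and even there it is not the finite sum the lemma asserts. Dickson's lemma bounds $(\Delta_I)_{\mathrm{min}}$ but not the set of monomials occurring in $g_w$, which are arbitrary elements of the infinite set $\Delta_I$; Artin--Rees merely restates that $b_w$ has large order. Attempts to absorb high-degree good terms into comparable lower-degree ones push the problem one level down: the coefficients then only form $(x)$-adic Cauchy sequences with no reason to converge in $A$.

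The paper sidesteps the iteration entirely. It applies Zorn's lemma to the poset $E_f$ of subsets $\Delta\subset\Delta_I$ with $\Delta=\Delta_{\mathrm{min}}$ and $f\in I(\Delta)$, ordered by inclusion of the generated ideals $\langle\Delta\rangle$; the inductive step for a chain $S$ takes $\Delta_0=\bigl(\bigcap_{\Delta\in S}\langle\Delta\rangle\bigr)_{\mathrm{min}}$ and proves $f\in I(\Delta_0)$ by showing $f\in I(\Delta_0)+(x)^\ell$ for all $\ell$ and invoking Krull's intersection theorem. For a minimal $\Delta$, any coefficient lying in $(x)$ could be expanded, strictly shrinking $\langle\Delta\rangle$ --- so minimality forces all coefficients outside $(x)$ at once, with no limiting process. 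To salvage your constructive approach you would need an a priori bound on the monomials that can occur in a good representation of $f$; as written, the argument does not prove the lemma.
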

\begin{proof}
   If $\Delta$ is a subset of $\Delta_I$, we denote by $I(\Delta)\subset I$ the monomial ideal of $A$ generated by the $x^\alpha$ for $\alpha\in \Delta$. Note that $I(\Delta_{\textrm{min}})=I(\Delta)$. For $f\in I$, let $E_f$ denote the set of subsets $\Delta\subset \Delta_I$ such that $f\in I(\Delta)$ and $\Delta=\Delta_{\textrm{min}}$.
  \[
    E_f=\{ \Delta\subset \Delta_I\ |\ f\in I(\Delta) \textrm{ and } \Delta=\Delta_{\textrm{min}}\}
  \]
Since $f\in I$, it can be written as a linear combination of monomials of $I$, hence $E_f$ is nonempty.
  If $\Delta, \Delta'\in E_f$, we say that $\Delta\leq \Delta'$ if $\langle \Delta\rangle\subset \langle \Delta'\rangle$. This is a partial order on $E_f$. Using Zorn's Lemma, we will prove that $E_f$ has a minimal element. For this we need to prove that $E_f$ is inductive. Let $S$ be a nonempty totally ordered subset of $E_f$. Let
$$\Delta_0=\left(
  \bigcap_{\Delta\in S} \langle \Delta\rangle
\right)_{\textrm{min}}.$$
It is clear that $\Delta_0=(\Delta_0)_{\textrm{min}}$, that $\Delta_0\subset \Delta_I$, and that $\langle\Delta_0\rangle\subset \langle\Delta\rangle$ for all $\Delta\in S$. Hence, to conclude that $E_f$ is inductive, it suffices to prove that $f\in I(\Delta_0)$. For each $\ell\in \N$, let $\Delta^{\ell}_0=\langle \Delta_0\cup \{\alpha\in \N^n\ |\ |\alpha|=\ell\}\rangle$. Note that the set  $\N^n\smallsetminus \Delta_0^\ell$ is a finite set. If $\alpha \notin \Delta_0^\ell$, we have in particular $\alpha\notin \langle\Delta_0\rangle$, hence there exists $\Delta\in S$ such that $\alpha\notin \langle\Delta\rangle$. Since $\N^n\smallsetminus \Delta_0^\ell$ is a finite set, and since~$S$ is totally ordered, there exists a small enough $\Delta\in S$ such that $\langle\Delta\rangle$ avoids all the elements $\alpha\notin \Delta_0^\ell$, in other words such that $\langle\Delta\rangle\subset \Delta_0^\ell$. But $f\in I(\Delta)$, hence $f\in I(\Delta_0^\ell)=I(\Delta_0)+(x)^\ell$. This proves that
\(
  f\in \bigcap_{\ell\geq 0}(I(\Delta_0)+(x)^\ell).
\)
By the Krull intersection theorem, the latter intersection is equal to $I(\Delta_0)$. This concludes the proof that $E_f$ is inductive.

Now, let $\Delta$ be a minimal element of $E_f$. Since $f\in I(\Delta)$, we can write $f=\sum_{\alpha\in \Delta}a_{\alpha}x^\alpha$ with $a_{\alpha}\in A$. If there exists $\alpha\in \Delta$ such that $a_{\alpha}\in (x)$, we can write $a_{\alpha}=\sum_{i=1}^n b_ix_i$. But then we can expand the above sum, and this proves that there exists $\Delta'\subset \Delta_I$ such that $f\in I(\Delta')$ and $\langle\Delta'\rangle\subsetneq \langle\Delta\rangle$, contradicting the minimality of $\Delta$. Hence we have $a_{\alpha}\notin (x)$ for all $\alpha\in \Delta$ and this concludes the proof.
\end{proof}

\prop
\label{prop:intersection_monomiale}
  Assume that $A$ is a Noetherian local ring and that the sequence $x$ is regular. Then we have the following properties for monomial ideals $I, J, K$ of~$A$.
  \begin{enumerate}
    \item The ideal $I\cap J$ is monomial, and $\Mon(I\cap J)=\Mon(I)\cap \Mon(J)$.
    \item $(I+J)\cap K=(I\cap K)+(J\cap K)$
    \item If $I$ and $J$ have disjoint supports, then $I\cap J=IJ$.
  \end{enumerate}
\xprop
\begin{proof} Let $f\in I\cap J$. By~\ref{lem:ecriture_standard}, since $f\in I$, we can write it as a finite sum
$f=\sum_i a_if_i$, where the $f_i$ are pairwise distinct monomials of $I$, and $a_i\notin (x)$ for all $i$. By~\ref{lem:monomes_et_comb_lin}, since $f\in J$, we have $f_i\in J$, hence $f_i\in \Mon(I\cap J)$, and this proves that $I\cap J$ is monomial. The relation $\Mon(I\cap J)=\Mon(I)\cap \Mon(J)$ is obvious on the definition. 

Let us prove (2). Since the ideals on both sides of the equation are monomial, it suffices to prove that they have the same monomials. By (1) and~\ref{cor:monomes_somme_et_produit} we have:
\begin{align*}
  \Mon((I+J)\cap K) &= \Mon(I+J)\cap \Mon(K) \\
  &= (\Mon(I)\cup \Mon(J))\cap \Mon(K) \\
  &= (\Mon(I)\cap \Mon(K))\cup (\Mon(J)\cap \Mon(K)) \\
  &= \Mon(I\cap K)\cup \Mon(J\cap K) \\
  &= \Mon((I\cap K)+(J\cap K)).
\end{align*}

To prove (3), let $x^\alpha\in \Mon(I\cap J)$. Since $x^\alpha\in \Mon(I)$, we have $\alpha\in \Delta(I)$, hence there exist $\beta\in \Delta(I)_{\textrm{\rm min}}$ and $\beta'\in \N^n$ such that $\alpha=\beta+\beta'$. Similarly, there are $\gamma\in \Delta(J)_{\textrm{\rm min}}$ and $\gamma'\in \N^n$ such that $\alpha=\gamma+\gamma'$. Let $\mu=(\mu_1,\dots, \mu_n)$ where $\mu_i=\beta'_i$ if $i$ is in the support of $I$, and $\mu_i=\gamma_i'$ otherwise. Since the supports of $I$ and $J$ are disjoint, in both cases we have $\alpha_i=\beta_i+\gamma_i+\mu_i$. Hence $\alpha=\beta+\gamma+\mu$, which proves that $x^\alpha\in \Mon(I).\Mon(J)$, as desired.
\end{proof}

\begin{cor} \label{coroap}
  Assume that $A$ is a Noetherian local ring and that the sequence $x$ is regular. Let $N_1,\dots, N_r, Q_1,\dots, Q_r$ and $Q$ be monomial ideals such that $\supp(N_i)$ is disjoint from $\supp(Q_i)$ and $\supp(Q)$ for each $i$. Then:
  \[
    \left(\sum_{i=1}^r N_iQ_i\right)\cap Q=\sum_{i=1}^rN_i(Q_i\cap Q).
  \]
\end{cor}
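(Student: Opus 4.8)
The plan is to reduce to the case $r=1$ using distributivity, and then to compare monomials.

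First I would note that every ideal occurring on either side is monomial: $Q_i\cap Q$ is monomial by Proposition~\ref{prop:intersection_monomiale}(1), and sums and products of monomial ideals are monomial, so the same applies to the left-hand side. Then, applying Proposition~\ref{prop:intersection_monomiale}(2) repeatedly (distributivity of $\cap$ over $+$), I would rewrite the left-hand side as
\[
\Bigl(\sum_{i=1}^r N_iQ_i\Bigr)\cap Q=\sum_{i=1}^r\bigl((N_iQ_i)\cap Q\bigr),
\]
which reduces the statement to proving, for each fixed $i$, that $(N_iQ_i)\cap Q=N_i(Q_i\cap Q)$ under the hypothesis that $\supp(N_i)$ is disjoint from $\supp(Q_i)$ and from $\supp(Q)$. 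Writing $N=N_i$ and $P=Q_i$, the inclusion $N(P\cap Q)\subset(NP)\cap Q$ is immediate, since $N(P\cap Q)\subset NP$ and $N(P\cap Q)\subset NQ\subset Q$.

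For the reverse inclusion, since both sides are monomial it suffices to take $x^\alpha\in\Mon((NP)\cap Q)$ and show $x^\alpha\in N(P\cap Q)$. By Corollary~\ref{cor:monomes_somme_et_produit}(2), $x^\alpha\in\Mon(NP)=\Mon(N)\cdot\Mon(P)$, so $\alpha=\beta+\gamma$ with $x^\beta\in\Mon(N)$ and $x^\gamma\in\Mon(P)$. I would then replace $\beta$ by a minimal element $\beta_0\in(\Delta_N)_{\mathrm{min}}$ with $\beta_0\leq\beta$, absorbing $x^{\beta-\beta_0}$ into the $P$-factor (it remains a monomial of the ideal $P$); thus one may assume $\beta\in(\Delta_N)_{\mathrm{min}}$, hence $\supp(\beta)\subset\supp(N)$. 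Since $x^\alpha\in\Mon(Q)$, there is $\delta\in(\Delta_Q)_{\mathrm{min}}$ with $\delta\leq\alpha$, and $\supp(\delta)\subset\supp(Q)$.

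The heart of the argument is then a short combinatorial check: whenever $\delta_j>0$ one has $j\in\supp(Q)$, which is disjoint from $\supp(N)\supset\supp(\beta)$, so $\beta_j=0$ and hence $\delta_j\leq\alpha_j=\gamma_j$; trivially $\delta_j\leq\gamma_j$ when $\delta_j=0$ as well. Therefore $\delta\leq\gamma$, so $x^\delta\mid x^\gamma$ and $x^\gamma\in Q$, giving $x^\gamma\in\Mon(P)\cap\Mon(Q)=\Mon(P\cap Q)$ by Proposition~\ref{prop:intersection_monomiale}(1). Since $x^\alpha=x^\beta\cdot x^\gamma$ with $x^\beta\in\Mon(N)$ and $x^\gamma\in\Mon(P\cap Q)$, we get $x^\alpha\in N(P\cap Q)$, as wanted. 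I expect the only genuinely delicate point to be the reduction to a minimal generator $\beta$ of $N$: this is what forces $\supp(\beta)\subset\supp(N)$ and so allows the disjointness of supports to be exploited; everything else is bookkeeping with the monomial identities already proved in the appendix.
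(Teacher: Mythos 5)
Your proof is correct, and the first step (distributing $\cap$ over the sum via Proposition~\ref{prop:intersection_monomiale}(2)) is exactly what the paper does. But for the key identity $(N_iQ_i)\cap Q=N_i(Q_i\cap Q)$ you take a genuinely different route. The paper applies Proposition~\ref{prop:intersection_monomiale}(3) twice: first $N_iQ_i=N_i\cap Q_i$ (using $\supp(N_i)\cap\supp(Q_i)=\varnothing$), so that $(N_iQ_i)\cap Q=N_i\cap(Q_i\cap Q)$, and then $N_i\cap(Q_i\cap Q)=N_i(Q_i\cap Q)$ (using that $\supp(Q_i\cap Q)\subset\supp(Q_i)\cup\supp(Q)$ is disjoint from $\supp(N_i)$). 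You instead argue monomial by monomial: both sides are monomial ideals, so it suffices to place each $x^\alpha\in\Mon\bigl((N_iQ_i)\cap Q\bigr)$ in $N_i(Q_i\cap Q)$, which you do by factoring $\alpha=\beta+\gamma$ with $\beta\in(\Delta_{N_i})_{\mathrm{min}}$ and checking that a minimal exponent $\delta$ of $Q$ below $\alpha$ must already sit below $\gamma$ because $\supp(\delta)\subset\supp(Q)$ avoids $\supp(\beta)\subset\supp(N_i)$. The combinatorial check is sound (the reduction to a minimal $\beta$, which forces $\supp(\beta)\subset\supp(N_i)$, is indeed the crucial move, and absorbing $x^{\beta-\beta_0}$ into the $Q_i$-factor is legitimate). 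What each approach buys: the paper's proof is shorter and reuses part~(3) of the proposition as a black box; yours is more hands-on but, notably, never uses the hypothesis that $\supp(N_i)$ is disjoint from $\supp(Q_i)$ --- only disjointness from $\supp(Q)$ --- so it actually establishes a slightly stronger statement than the corollary as written.
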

\begin{proof}
  By~\ref{prop:intersection_monomiale} (2), we have $(\sum_{i=1}^rN_iQ_i)\cap Q=\sum_{i=1}^r(N_iQ_i)\cap Q$. Since the supports of $N_i$ and $Q_i$ are disjoint, by~\ref{prop:intersection_monomiale} (3) we have $(N_iQ_i)\cap Q=N_i\cap Q_i\cap Q$. Since $\supp(Q_i\cap Q)\subset \supp(Q_i)\cup \supp(Q)$, it is disjoint from $\supp(N_i)$, hence applying~\ref{prop:intersection_monomiale}~(3) again we get $(N_iQ_i)\cap Q=N_i(Q_i\cap Q)$.
\end{proof}

To deal with intersections of monomial ideals whose supports are not disjoint, the following Proposition will be useful. 

\prop
\label{prop:intersection_inclusions}
 Let $I_1\subset I_2\subset \dots I_n$ be ideals of $A$. Assume that each ideal $I_j$ is generated by a subsequence $s_j$ of $x$, such that for all $j\in\{1,\dots, n-1\}$ the sequence $s_j$ is a subsequence of $s_{j+1}$, and for all $j\geq 2$ the sequence $s_j$ is quasi-regular. Then
  \begin{enumerate}
    \item For any two sequences of integers $(a_1,\dots, a_n)$ and $(b_1,\dots, b_n)$, we have
    \[
      (I_1^{a_1}\dots I_n^{a_n})\cap (I_1^{b_1}\dots I_n^{b_n})=I_1^{m_1}\cap \dots \cap I_n^{m_n},
    \]
    where the sequence $(m_i)$ is defined by $m_i=\max(\sum_{\ell=1}^ia_\ell, \sum_{\ell=1}^ib_\ell)$.
  \item For any nondecreasing sequence of integers $(m_1,\dots, m_n)$, we have 
  \[
    I_1^{m_1}\cap \dots \cap I_n^{m_n}=I_1^{m_1}I_2^{m_2-m_1}\dots I_n^{m_n-m_{n-1}}.
  \]
  \end{enumerate}
\xprop
\begin{proof}
  (2) Let $x\in I_1^{m_1}\cap \dots \cap I_n^{m_n}$. We prove by induction on $p\in \{1,\dots, n\}$ that $x\in  I_1^{m_1}I_2^{m_2-m_1}\dots I_p^{m_p-m_{p-1}}$. This is obvious for $p=1$. Assume that this holds for some $p<n$. Now we prove by induction on $t\in\{0,\dots, m_{p+1}-m_p\}$ that $x\in I_1^{m_1}I_2^{m_2-m_1}\dots I_p^{m_p-m_{p-1}}I_{p+1}^t$. This is obvious for $t=0$, and the induction step follows from Lemma~\ref{lem:cas_I_inclus_dans_J} applied with $I=I_1^{m_1}\dots I_p^{m_p-m_{p-1}}I_{p+1}^t$ and $J=I_{p+1}$.
  
  (1) By definition of $m_i$, the inclusion $\subset$ is obvious. To prove the converse, we use (2). Since $m_i\geq a_1+\dots +a_i$ for each $i$, we have
  \begin{align*}
    I_1^{m_1}\cap \dots \cap I_n^{m_n}&=I_1^{m_1}I_2^{m_2-m_1}\dots I_n^{m_n-m_{n-1}} \\
    &=I_1^{a_1}I_1^{m_1-a_1}I_2^{m_2-m_1}\dots I_n^{m_n-m_{n-1}} \\
    &\subset I_1^{a_1}I_2^{m_1-a_1}I_2^{m_2-m_1}\dots I_n^{m_n-m_{n-1}} \\
    &=I_1^{a_1}I_2^{a_2}I_2^{m_2-(a_1+a_2)}\dots I_n^{m_n-m_{n-1}} \\
    &\subset \dots \\
    &\subset I_1^{a_1}\dots I_k^{a_k}I_k^{m_k-(a_1+\dots+a_k)}I_{k+1}^{m_{k+1}-m_k}\dots I_n^{m_n-m_{n-1}} \\
    &\subset I_1^{a_1}\dots I_k^{a_k}I_{k+1}^{m_k-(a_1+\dots+a_k)}I_{k+1}^{m_{k+1}-m_k}\dots I_n^{m_n-m_{n-1}} \\
    &= I_1^{a_1}\dots I_k^{a_k}I_{k+1}^{a_{k+1}}I_{k+1}^{m_{k+1}-(a_1+\dots+a_{k+1})}\dots I_n^{m_n-m_{n-1}} \\
    &\subset \dots \\
    &\subset I_1^{a_1}\dots I_n^{a_n}.
  \end{align*}
  This finishes the proof.
\end{proof}

The Lemma below was used in the proof. Note that its assumptions are satisfied (with the integer $i=a_1+\dots+a_n$) if $J$ is generated by a quasi-regular sequence $x_1,\dots, x_r$, and if $I$ is the product $I_1^{a_1}\dots I_n^{a_n}$ where each ideal $I_k$ is generated by a subset of $\{x_1,\dots, x_r\}$. Indeed, in this case, $J^i/J^{i+1}$ admits an $(A/J)$-basis consisting of all the monomials of degree $i$ in $x_1,\dots, x_r$.

\begin{lem}
\label{lem:cas_I_inclus_dans_J}
  Let $I\subset J$ be two ideals with $I\subset J^i$ for some integer $i$. Suppose that $I$ has a generating set consisting of elements whose images in $J^{i}/J^{i+1}$ form a free family over $A/J$. Then $I\cap J^{i+1}=IJ$. 
\end{lem}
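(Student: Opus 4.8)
The plan is to prove the two inclusions of $I\cap J^{i+1}=IJ$ separately, the content being entirely in the inclusion $I\cap J^{i+1}\subseteq IJ$.

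First I would dispose of the easy direction $IJ\subseteq I\cap J^{i+1}$: since $I\subseteq J^i$ by hypothesis we get $IJ\subseteq J^iJ=J^{i+1}$, and trivially $IJ\subseteq I$; this uses nothing about the freeness assumption. For the converse, let $\{g_\lambda\}_{\lambda\in\Lambda}$ be a generating set of $I$ whose images $\bar g_\lambda$ in the $A/J$-module $J^i/J^{i+1}$ form a free family, and take $x\in I\cap J^{i+1}$. Because the $g_\lambda$ generate $I$, write $x=\sum_\lambda a_\lambda g_\lambda$ as a \emph{finite} $A$-linear combination. Both sides lie in $J^i$ (as $x\in J^{i+1}\subseteq J^i$ and each $g_\lambda\in I\subseteq J^i$), so this is an identity in $J^i$; projecting to $J^i/J^{i+1}$ and using $x\in J^{i+1}$ gives $0=\sum_\lambda \bar a_\lambda\,\bar g_\lambda$, where $\bar a_\lambda\in A/J$ is the class of $a_\lambda$ and we have used that the $A/J$-action on $J^i/J^{i+1}$ satisfies $\bar a\cdot\bar g=\overline{ag}$. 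Freeness of $\{\bar g_\lambda\}$ then forces $\bar a_\lambda=0$, i.e.\ $a_\lambda\in J$, for each $\lambda$ appearing in the sum, whence $x=\sum_\lambda a_\lambda g_\lambda\in JI=IJ$.

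I do not expect a genuine obstacle in this argument; the only points needing a little care are that the combination expressing $x$ is finite (so ``free family'' is used in its usual sense of linear independence of finitely many elements) and that one must work inside the $A/J$-module $J^i/J^{i+1}$ rather than reduce naively modulo $J^{i+1}$ in $A$. Finally, to see that the lemma applies in the situation of the preceding remark and of Proposition~\ref{prop:intersection_inclusions}, one invokes the standard fact that when $J=(x_1,\dots,x_r)$ with $x$ quasi-regular, $J^i/J^{i+1}$ is free over $A/J$ on the monomials of degree $i$; when $I=I_1^{a_1}\cdots I_n^{a_n}$ with each $I_k$ generated by a subset of $\{x_1,\dots,x_r\}$ and $i=a_1+\dots+a_n$, the natural monomial generators of $I$ then have images forming a (finite) subfamily of that basis, hence a free family, which is exactly the hypothesis required.
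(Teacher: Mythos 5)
Your argument is correct and is essentially the proof the paper gives: the easy inclusion follows from $I\subset J^i$, and for the converse one expresses $x\in I\cap J^{i+1}$ as a finite combination of the distinguished generators, projects into the $A/J$-module $J^i/J^{i+1}$, and uses freeness to conclude the coefficients lie in $J$. The additional remarks on finiteness of the combination and on how the hypothesis is verified for quasi-regular sequences match the paper's accompanying discussion.
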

\begin{proof}
  The inclusion $IJ\subset I\cap J^{i+1}$ is obvious. Conversely, let $x\in I\cap J^{i+1}$. Let $(x_\ell)_\ell$ be a generating set of $I$ as in the statement. Then there exist elements $\lambda_\ell\in A$ such that $x=\sum \lambda_\ell x_\ell$. Since $x\in J^{i+1}$, we have $\sum \lambda_\ell x_\ell=0$ in $J^{i}/J^{i+1}$, but since the family of $x_\ell$ is free there, we deduce that $\lambda_\ell\in J$, hence $x\in IJ$.
\end{proof}

\exam
  Note that Proposition~\ref{prop:intersection_inclusions} is not true for ideals generated by monomials, even if the sequence $x$ is regular and the ring $A$ is local and Noetherian. For example, consider the ring $A=k\llbracket x,y,z,t\rrbracket$ and the ideals $I_1=(yz)$ and $I_2=(xy,yz,zt)$. Then $xyzt$ belongs to $I_1\cap I_2^2$ but not to $I_1I_2$.
\xexam

\exam
  The conclusion of~\ref{prop:intersection_inclusions} is not true either if we replace the quasi-regular assumption on $s_j$ with the weaker assumption that $I_j$ is strongly Lech-independent.
  For example, let $A=k\llbracket x,y,z\rrbracket/(x^2-zy^3)$, and take the ideals $I_1=(x)$ and $I_2=(x,y)$. We have $A/I_2\simeq k\llbracket z\rrbracket$. For any $i\geq 1$, $I_2^i/I_2^{i+1}$ is free of rank 2 over $A/I_2$, with basis $(xy^{i-1}, y^i)$, so $I_2$ is strongly Lech-independent. But we have $x^2=zy^3\in (I_1^2\cap I_2^3)\smallsetminus I_1^2I_2$, hence $(I_1^2\cap I_2^3)\neq I_1^2I_2$.
\xexam

\prop \label{AppPro:Tensor-equal-product} Let $I_1,\ldots I_n$ be ideals of $A$. Assume that each ideal $I_j$ is generated by a subsequence $s_j$ of $x$ such that for all $j\neq j'$ in $\{1,\ldots, n\}$ the sequences $s_j$ and $s_j'$ are disjoint. Then for any sequence of integers $(m_1,\ldots,m_n)$ the canonical homomorphism of $A$-modules
$$ I_1^{m_1}\otimes_A \cdots \otimes_A I_n^{m_n}\to I_1^{m_1}\cdots I_n^{m_n}$$ 
is an isomorphism.
\xprop
\pf We proceed by induction on the number $n$ of ideals. Putting $J=I_1^{n_1}\cdots I_{n-1}^{m_{n-1}}$, the canonical homomorphism $\bigotimes_{j=1}^{n-1}I_j^{m_j}\to \prod_{j=1}^{n-1} I_j^{m_j}$ is an isomorphism by the induction hypothesis. The canonical homomorphism $\mu_{m_n}:J\otimes_A I_n^{m_n}\to JI_n^{m_n}$ is clearly surjective. 
Tensoring the exact sequence $0\to J\to A\to A/J\to 0$ by $I_n^{m_n}\to 0$ and using the fact that $\mathrm{Tor}_1^{A}(A,I_n^{m_n})=0$ gives a canonical isomorphism $\mathrm{Ker}(\mu_{m_n})\cong \mathrm{Tor}_1^A(A/J,I^n_{m_n})$. On the other hand, tensoring the short exact sequence $0\to I_n^{m_n}\to A\to A/I_n^{m_n}$ by $A/J$ gives a long exact sequence $$\cdots \to 0=\mathrm{Tor}_2^{A}(A/J,A)\to\mathrm{Tor}_2^{A}(A/J,A/I_n^{m_n}) \to  \mathrm{Tor}_1^{A}(A/J,I_n^{m_n})\to \mathrm{Tor}_1^{A}(A/J,A)=0\to \cdots$$ which canonically identifies in turn   $\mathrm{Ker}(\mu_{m_n})$ with $\mathrm{Tor}_2^{A}(A/J,A/I_n^{m_n})$. Since $I_n$ is generated by the regular subsequence $s_n$ of $x$ and a regular sequence is Koszul regular  \cite[\href{https://stacks.math.columbia.edu/tag/062F}{Tag 062F}]{stacks-project}, the Koszul complex $K_{\bullet}(A,s_n)$ of $s_n$ is a free resolution of $A/I_n$ and hence, we have $\mathrm{Tor}_k^A(A/J,A/I_n)=H_k(K_{\bullet}(A,s_n)\otimes_A A/J)$ for all $k$. Since the subsequence $s_n$ is disjoint from the subsequences $s_j$ generating the ideals $I_j$, $1\leq j\leq n-1$, the image $\overline{s}_n$ of $s_n$ by the quotient morphism $A\to A/J$ is a regular sequence in $A/J$.  This implies 
that $K_{\bullet}(A,s_n)\otimes_A A/J$ is isomorphic to the Koszul complex 
$K_{\bullet}(A/J,\overline{s}_n)$ and that the latter is exact.  It follows that $\mathrm{Tor}_k^A(A/J,A/I_n)=0$ for every $k>0$, whence that $\mathrm{Ker}(\mu_1)=\mathrm{Tor}_2^A(A/J,A/I_n)=0$. Now assume by induction that $\mathrm{Ker}(\mu_m)=0$ for every $1\leq m < m_n$. Tensoring the exact sequence $0\to I_n^{m_n-1}/I_n^{m_n}\to A/I_n^{m_n}\to A/I_n^{m_n-1}\to 0$ by $A/J$ gives the long exact sequence 
$$ \cdots \to \mathrm{Tor}_2^A(A/J,I_n^{m_n-1}/I_n^{m_n})\to \mathrm{Tor}_2(A/J,A/I_n^{m_n}) \to \mathrm{Tor}_2(A/J,A/I_n^{m_n-1})\to \cdots $$ 
Since $I_n$ is generated by the regular sequence $s_n$, $I_n^{m_n-1}/I_n^{m_n}$ is a free $A/I_n$-module, isomorphic to the $(m_n-1)$-th symmetric module of $I_n/I_n^2$. Since $\mathrm{Tor}_2^A(A/J,A/I_n)=0$ by the previous observation, this implies that $\mathrm{Tor}_2^A(A/J, I_n^{m_n-1}/I_n^{m_n})=0$ as well. Combined with the induction hypothesis that  $\mathrm{Ker}(\mu_{m_n-1})=\mathrm{Tor}_2^A(A/J,A/I_n^{m_n-1})=0$, this implies that $\mathrm{Tor}_2^A(A/J,A/I_n^{m_n})=\mathrm{Ker}(\mu_{m_n})=0$ and completes the proof.  
\xpf 

\medskip

We conclude this appendix with a result that can provide formulas like $I\cap J=IJ$ in other situations. This variant is not used in the main part of the article. In particular it contains a slight generalization of~\cite[(16.9.13.5)]{Gr67}: in \cite{Gr67} it is proved that $I^\ell\cap J=I^\ell J$ for any integer $\ell\geq 0$, under the assumption that $I$ is generated by a sequence that is regular in $A/J$, with $A$ Noetherian and $I$ contained in the Jacobson radical. Recall that, following Meng~\cite{Me21}, an ideal $I$ in a ring~$A$ is called strongly Lech-independent if $I^i/I^{i+1}$ is free over $A/I$ for any $i\geq 1$. It is clear on the definition that a quasi-regular sequence generates a strongly Lech-independent ideal. But the latter condition is actually significantly weaker. For example, any maximal ideal is clearly strongly Lech-independent, but is not always generated by a quasi-regular sequence.

\prop
\label{prop:intersection_cas_H1_regulier}
  Let $A$ be a ring. Let $p\leq q$ be two natural integers. Let $(f_1,\dots, f_q)$ be a sequence of elements of $A$. Consider the ideals $I=(f_1,\dots, f_p)$ and $J=(f_{p+1},\dots, f_q)$.
  \begin{enumerate}[label=(\arabic*)]
    \item If the sequence $(f_1,\dots, f_q)$ is $H_1$-regular, then the sequence $(\ov f_1, \dots, \ov f_p)$ in $A/J$ is $H_1$-regular.  \cite[\href{https://stacks.math.columbia.edu/tag/068L}{Tag 068L}]{stacks-project}
    \item If the sequence $(\ov f_1, \dots, \ov f_p)$ in $A/J$ is $H_1$-regular, then $I\cap J=IJ$. \cite[\href{https://stacks.math.columbia.edu/tag/0665}{Tag 0665}]{stacks-project}
    \item If $I$ is strongly Lech-independent, then for any ideal $K$ such that $I\cap K=IK$, we have $I^\ell\cap K=I^\ell K$ for any integer $\ell\geq 0$.
    \item If both $I$ and $J$ are strongly Lech-independent, and if $I\cap J=IJ$, then for all $\alpha, \beta \in \N$:
    \[
      I^\alpha\cap J^\beta =I^\alpha J^\beta.
    \]
  \end{enumerate}
\xprop
\begin{proof}
(1) Let $\sum_{i=1}^p \ov\lambda_i\ov f_i=0$ be a relation in $A/J$. It can be lifted to a relation $\sum_{i=1}^q \lambda_i f_i=0$ in $A$. Since the sequence $(f_1,\dots, f_q)$ is $H_1$-regular, there exists a family of elements $(\mu_{j,k})$ indexed by the pairs $(j,k)$ with $1\leq j<k \leq q$, such that for all $i$,
\[
  \lambda_i=\sum_{j=1}^{i-1}f_j\mu_{j,i}-\sum_{k=i+1}^q f_k\mu_{i,k}.
\]
By projecting this relation in $A/J$, we obtain the desired result.

(2) Let $x\in I\cap J$. Since $x\in I$, there exist elements $\lambda_1,\dots, \lambda_p$ such that 
$x=\sum_{i=1}^p f_i\lambda_i$. By projecting this relation in $A/J$, we get $\sum \ov f_i\ov\lambda_i=0$, in other words the family $(\ov \lambda_1,\dots, \ov \lambda_p)$ is in the kernel of the differential $d$ of the Koszul complex. Since the sequence $(\ov f_1, \dots, \ov f_p)$ is $H_1$-regular, $(\ov \lambda_1,\dots, \ov \lambda_p)$ is in the image of $d$, so there exists a family of elements $(\mu_{j,k})$ indexed by the pairs $(j,k)$ with $j<k$, and elements $y_i\in J$, such that for all $i$,
\[
  \lambda_i=\sum_{j=1}^{i-1}f_j\mu_{j,i}-\sum_{k=i+1}^p f_k\mu_{i,k}+y_i.
\]
We deduce that
\begin{align*}
  x=\sum_{i=1}^p f_i \lambda_i = \sum_{1\leq j<i\leq p}f_if_j\mu_{j,i}-\sum_{1\leq i<k\leq p}f_if_k\mu_{i,k}
  +\sum_{i=1}^p f_iy_i
  = \sum_{i=1}^p f_iy_i\in IJ.
\end{align*}

(3) The equality is proved by induction on $\ell$. It is obvious for $\ell=0$ and it is our assumption for $\ell=1$. Suppose $I^\ell\cap K=I^\ell K$ for some integer $\ell\geq 0$. Let $x$ be an element of $I^{\ell+1}\cap K$. By the inductive hypothesis, we know that $x\in I^{\ell}K$. Let $(g_i)$ be a family of elements that form an $A/I$-basis of $I^\ell/I^{\ell+1}$. Then there exist a finite number of elements $\lambda_i\in K$, and a $y\in I^{\ell+1}K$ such that
 \(
   x=y+\sum_i \lambda_ig_i.
 \)
Moreover, since $x\in I^{\ell+1}$, we have $\sum_i\lambda_ig_i=0$ in $I^\ell/I^{\ell+1}$. By the strong Lech-independence of $I$, this implies that $\lambda_i\in I$ for all $i$. We deduce that $\lambda_i\in I\cap K=IK$, which proves that $x\in I^{\ell+1}K$.

(4) From (3) applied with $K=J$, we deduce that $I^\alpha\cap J=I^\alpha J$ for all $\alpha\in \N$. Applying (3) again with the strongly independent ideal $J$ and $K=I^\alpha$, we deduce the desired result.
\end{proof}

 \end{document}